\numberwithin{equation}{section}
\theoremstyle{definition}
\newtheorem{defin}{Definition}[section]%chapter
\newtheorem*{defin*}{Definition}
\newtheorem*{oss*}{Remark}
\newtheorem*{es*}{Example}
\newtheorem*{dim*}{}
\theoremstyle{plain}
\newtheorem{teo}[defin]{Theorem}%[chapter]%section
\newtheorem*{teo*}{Theorem}
\newtheorem{cor}[defin]{Corollary}%[chapter]%section
\newtheorem*{cor*}{Corollary}
\newtheorem{lemma}[defin]{Lemma}%[chapter]%section
\newtheorem*{lemma*}{Lemma}
\newtheorem{prop}[defin]{Proposition}
\newtheorem*{prop*}{Proposition}
\newcommand{\N}{\mathbb{N}}
\newcommand{\R}{\mathbb{R}}
\let\P\undefined
\newcommand{\P}{J}
\let\M\undefined
\newcommand{\M}{\mathscr M}
\let\t\undefined
\newcommand{\t}{T_{h,t}^\pm}
\newcommand{\ud}{\,\textnormal{d}}
\def\Jet{\mathcal{J}}
\def\PJet{\mathcal{P}}
\def\G{\mathtt{G}}
\newcommand{\virg}[1]{``#1''}
\newcommand{\bd}{\partial}
\newcommand{\dist}{\textnormal{dist}}
\newcommand{\sd}{\textnormal{sd}^\psi}
\newcommand{\e}{\varepsilon}
\title[Variational Nonlinear and Nonlocal Curvature Flows]{Variational Nonlinear and Nonlocal Curvature Flows}
\author{Daniele De Gennaro}
\address{Department of Decision Sciences and BIDSA, Bocconi University, Via Roentgen 1, Milano, 20136, Italy}
\email{daniele.degennaro@unibocconi.it}
\date{}
\begin{document}

\begin{abstract}
We prove that  the minimizing movements scheme \'a la Almgren-Taylor-Wang  converges towards level-set solutions to a nonlinear version of nonlocal curvature flows with time-depending forcing term, in the rather general framework of variational curvatures introduced in \cite{ChaMorPon15}. The nonlinearity involved is assumed to satisfy minimal assumptions, namely continuity, monotonicity, and vanishing at zero. Under additional assumptions only on the curvatures involved,   we establish uniqueness for level-set solutions.
\end{abstract}

\maketitle

%\tableofcontents
\section{Introduction}
This paper establishes existence via minimizing movements and uniqueness results for a nonlinear modification of  variational and nonlocal curvature flows in presence of mobility and time-dependent forcing. This nonlinear and nonlocal generalization of the classical mean curvature flow (MCF in short) is defined as follows: given a continuous, non-decreasing function $\G: \mathbb{R} \to \mathbb{R}$ with $\G(0)=0$, we consider the evolution of a family of sets   $t\mapsto E_t$ formally governed by the evolution law
\begin{equation}\label{law evol}
    V(x,t)=\psi(\nu_{E_t}(x))\G\Big( -\kappa(x,E_t) + \mathtt f(t) \Big),\qquad \text{for all }x\in \partial E_t,\  t\ge 0,
\end{equation}
where $\psi$ is an  anisotropy (usually called the mobility), $\nu_{E_t}$ denotes the outer normal vector to $E_t$ and $\mathtt f$ is a forcing term constant in space. In \eqref{law evol}  the curvature $\kappa(\cdot,E)$ denotes a variational curvature, belonging to a class of   generalized nonlocal curvatures introduced  in \cite{ChaMorPon15}.

Generalised curvatures are  functions defined on pairs $(x,E)$, where $E$ is a set of class $C^2$ with compact boundary and $x\in\bd E$, that are non-decreasing with respect to inclusion of sets touching at $x$, continuous w.r.t. $C^2$-convergence of sets, and translation invariant (see conditions (A)-(C) below). %For technical reasons one also requires the curvature of balls to be uniformly bounded from below. 
We will focus on a particular instance of generalized curvatures, namely variational curvature. These curvatures arise as the first variation (in a suitable sense) of perimeter-like functionals, which are called generalized perimeters. A generalized perimeter $J:\M\to [0,+\infty]$ is a translation invariant  functional on the class of measurable sets $\M$, which is insensitive to modifications on negligible sets, finite on $C^2$-sets with compact boundary, lower semicontinuous w.r.t. the $L^1_{loc}$-convergence, and satisfies a submodularity condition: $J(E\cap F)+J(E\cup F)\le J(E)+J(F)$ for every $E,F\in\M$.

The evolution law \eqref{law evol} is relevant even in the specific instance where $\kappa$ is the classical mean curvature, arising as first variation of the perimeter. From a numerical point of view, as suggested e.g. in \cite[Remark 3.5]{ChaNov08}, a truncation of the classical evolution speed $V=-\kappa$
is usually encoded in algorithms for the MCF, which corresponds to choosing $\G(s)=(-M)\vee s\wedge M$ in \eqref{law evol}, for $M>0$ large. Another interesting choice could be $\G(s)=-s^-$ (so that $\G(-\kappa)=-\kappa^+$), which amounts to consider a purely shrinking evolution. Moreover, evolution by powers of the mean curvature   have been previously studied in the smooth or convex setting \cite{And10,Cho87,RisSin,Sch05}  and have been used to prove isoperimetric inequalities \cite{Sch08}, or considered   in the setting of image processing algorithms \cite{AlvGuiLioMor,SapTan}. In particular, in \cite[Section 4.5]{AlvGuiLioMor} it is remarked that the evolution law \eqref{law evol} with $\G(s)=s^{\frac13}$ and $\psi=|\cdot|, \mathtt f=0$ is particularly interesting as it is invariant under affine transformations (isometries and rescalings). See also \cite{DipNovVal} for interesting links between motion by powers of the mean curvature and  a time-fractional Allen-Cahn equation, and \cite{BelKho}, where flat flows solutions to the power (anisotropic) mean curvature flow are studied.

On the other hand, being able to address this study in the framework of generalized curvatures and general nonlinearity $\G$, allows us to prove new results for different geometric flows.
This notion of generalized curvature has been introduced in \cite{ChaMorPon15}  to deal with a wide class of local and nonlocal translation-invariant geometric flows in a unified framework. Some previous contributions can be found in \cite{Car01,Car00,CarLey,Imb,Sle03}.
As detailed in \cite[Section 5]{ChaMorPon15}, some instances of geometric flows driven by variational curvatures are the following: classical \textit{anisotropic MCF} (driven by a suitably smooth and translation invariant anisotropy),  \textit{fractional MCF}, \textit{capacity flows}, and flows driven by the curvature associated to the \textit{regularized pre-Minkowski content}. See also \cite{AzaJimMac} for some extensions. 

Given the definition of variational curvature, and the formal gradient flow structure of the MCF,  one is naturally led to consider the minimizing movements approach, in the spirit of  \cite{AlmTayWan,LucStu}, as a way to prove existence for \eqref{law evol}.  This scheme provides a  discrete-in-time approximation of the evolution law \eqref{law evol} by iteratively solving a variational problem, where the energy to minimize consists of the sum of $J$ and a  suitable dissipation term that penalizes the $L^2$-distance between sets.
In our setting, we will modify the iterative scheme of \cite{ChaMorPon15} (reminiscent of \cite{AlmTayWan,LucStu}),  tailored for the present general setting, by taking into account the nonlinearity in the dissipation term.

We provide here existence via minimizing movements and uniqueness of viscosity solutions to  nonlinear and possibly nonlocal curvature flows  in the presence of continuous time-dependent forcing and mobility, in the form \eqref{law evol}.
Our first main result concerns the instance of \eqref{law evol} where $\kappa$ is a variational curvature. In this case, we show in  Theorem \ref{teo sol viscosa} that  the minimizing movements scheme produces discrete-in-time functions that converge, as the time-step parameter tends to zero, towards a viscosity solution to \eqref{law evol}. 
Subsequently, we establish {uniqueness} for the parabolic Cauchy problem associated with the level set formulation of \eqref{law evol}. This result, presented in Theorem~\ref{thm:comp}, does not require the curvature $\kappa$ to be variational, though it must satisfy specific additional conditions. Remarkably, no further assumptions on $\G$ are needed. In particular, $\kappa$ is required to be either of first-order type or to satisfy a strengthened uniform regularity condition in the second-order case (see conditions (FO) and (C') in Section \ref{sect:uniqueness} for details).
All the relevant examples of generalized curvatures presented above satisfy these assumptions.

The proofs are inspired by the techniques developed in \cite{ChaMorPon15}, coupled with recent insights we developed in \cite{ChaDegMor} (see also \cite{Cha}). In \cite{ChaMorPon15}, the authors prove existence and uniqueness of viscosity solutions to curvature flows of the form $V=-\kappa,$ with $\kappa$ being a generalized curvature (the uniqueness result requires additional assumptions on $\kappa$, the same we will require in the last section). In the specific case of variational curvatures, existence can also be proved by using the minimizing movements scheme, similar to the one sketched above.
The starting observation  is that, under our assumptions on $\G$, if $\kappa$  is a generalized curvature, then $-\G(-\kappa)$ is still a generalized curvature. Therefore, the same viscosity theory of \cite{ChaMorPon15} applies to evolution laws of the form 
\begin{equation}\label{eq:evol-nonlocal}
    V=\G(-\kappa),
\end{equation}
providing existence of viscosity solutions, convergence of the minimizing movements scheme and uniqueness under further assumptions on $\G$ and $\kappa$. Anyhow, when dealing with \eqref{law evol} two problems arise. Firstly, 
it is no longer true in general that if $\kappa$ is a \textit{variational} curvature, then so is $-\G(-\kappa)$. In particular, convergence  of the minimizing movements scheme does not follow immediately from  \cite{ChaMorPon15}.
It is thus interesting to modify the minimizing movements scheme  to account for the nonlinear term, even in the simplified version of \eqref{law evol} given by \eqref{eq:evol-nonlocal}. 
In this regard, nontrivial difficulties arise in the case where $\G$ is bounded from above or below, as some tools heavily employed in the linear setting are no longer available (see e.g. the commonly used reformulation \eqref{pb equi inf}). This issue will be circumvented  by an approximation procedure. One of the main goals of this paper was indeed considering $\G$ with minimal regularity assumptions.

Secondly, the introduction of a time-dependent forcing term and a mobility requires some care. Indeed, the level set formulation for  \eqref{law evol} with time-dependent forcing and mobility does not fall in the framework of \cite{ChaMorPon15}. 
In particular, the proof of the comparison principle needs some careful work. It is  inspired by \cite{ChaMorPon15} with some insights coming from the classical theory of viscosity solutions  (see for instance \cite{Gig-book}). 
 
This work is an extension and an improvement of the unpublished (and unfinished) preprint \cite{ChaCioTho}, where the authors show the convergence of the minimizing movements scheme towards \eqref{eq:evol-nonlocal}, where $\kappa$ the \textit{classical} mean curvature and  $\G$ is a smooth function with polynomial growth.
%Other nontrivial difficulties arise when dealing with generalized perimeters, as many tools from the theory of sets of finite perimeter are no longer available. 
%Finally, notice that classical results yield existence and uniqueness for viscosity solutions to this geometric flow (see e.g. \cite{CheGigGot,EvaSpr,Gig-book}).

To conclude, it would be interesting to study the much more challenging case where the subjacent perimeter is of crystalline type.  In this setting the availability of the viscosity solutions of \cite{GigPoz16,GigPoz18}   and  the development of distribution solutions of \cite{ChaMorNovPon19JAMS,CHaMorNovPon19Anal,ChaMorPon17} may suggest the possibility of a future investigation in this direction. Another interesting instance is the non translation invariant case, and a first  step could be considering  the same setting of  \cite{ChaDegMor}.   {A simplified model would consist in considering a forcing term $\mathtt{f}$ that also depends on the spatial variable. In this case, we expect the convergence result to still hold, by suitably adapting the arguments from the present work and from~\cite{ChaDegMor}. However, the uniqueness result appears to be more delicate as the proof presented here does not carry effortlessly to this instance (see \eqref{eq:problemsf?}). }

The paper is structured as follows. In Section \ref{sect:mms} we introduce some notation and the minimizing movements scheme. Then, in Sections \ref{sect:main_res} we show the convergence of the minimizing movements scheme towards viscosity solutions to \eqref{law evol}. Uniqueness of viscosity solutions to \eqref{law evol}, under additional assumptions on $\kappa$  is the subject of Section \ref{sect:uniqueness}.

\section{The minimizing movements scheme}\label{sect:mms}
\subsection{Preliminaries}
We start introducing some notations.  We will use both $B_r(x)$ and $B(x,r)$ to denote the Euclidean ball in $\R^N$ centered in $x$ and of radius $r$.  If the ball is centered in zero, we simply write $B_r$. We let $\mathscr M$ denote the family of the measurable sets in $\R^N$, and  $E\in C^2$ to say that the set $E$ is of class $C^2$.
In the following, we will always speak about measurable sets and refer to a set as the union of all the points of density $1$ of that set i.e. $E=E^{(1)}.$ Moreover, if not otherwise stated, we implicitly assume that the function spaces considered are defined on $\R^N$,  e.g $L^\infty=L^\infty(\R^N)$. Moreover, we often drop the measure with respect to which we are integrating, if clear from the context.

\begin{defin}%\label{def anisotropy}
   We define  {anisotropy} a function $\psi: \R^{N}\to [0,+\infty)$ which is continuous,  convex, even and positively 1-homogeneous. Moreover, there exists $c_\psi>0$ such that $\forall p\in\R^N$ it holds
\begin{equation}\label{bound velocity}
    \frac 1{c_\psi}|p|\le \psi(p)\le c_\psi |p|.
\end{equation}
\end{defin}
We recall that  the polar function $\psi^\circ$ of an anisotropy $\psi$ is defined by
$$\psi^\circ(v):=\sup_{\psi(\xi)\le 1}\xi\cdot v.   $$
The following identities hold for smooth anisotropies: $\forall v,\xi\in\R^N$ 
\[ \psi(v)\psi^\circ(\xi)\ge v\cdot \xi, \qquad  \psi^\circ(\nabla \psi(v))=v,\qquad \nabla \psi(v)\cdot v=\psi(v). \]
\begin{defin}
    Given an anisotropy $\psi$ and a set $E$, we define the  {$\psi $-distance from $E$} as 
    $$\dist^\psi_E(x)=\inf_{y\in E}\psi^\circ(x-y),$$
    and the  {signed $\psi $-distance} from $E$ as 
    $$\sd_E(x)=\dist^\psi_E(x)-\dist^\psi_{E^c}(x).$$
    For $\delta\in\R$ and $E\in \mathscr M$, we denote
\[ E_\delta=\{ x\in\R^N\ :\ \sd_E(x)\le \delta \}, \]
and use the notation $E_{-\infty}:=\emptyset, E_{+\infty}:=\R^N.$
\end{defin}
Note that \eqref{bound velocity} implies that
\begin{equation}
    \frac 1{c_\psi}\dist_E(x)\le \dist^\psi_E(x)\le c_\psi \dist_E(x),
\end{equation}
where $\dist_E$ denotes the Euclidean distance from the set $E$.

In this section we extend the previous study to nonlocal instances, in the spirit of \cite{ChaMorPon15}. We recall some notation. For any given $E\in C^2$, we consider\footnote{One can slightly generalize this definition by considering sets in $C^{k,\beta}$ with $k\ge 2$, $\beta\in[0,1]$, but for simplicity we consider the $C^2$ case only. } a function $x\mapsto \kappa(x,E)$, defined for $x\in\bd E$, and that we will call (generalized) {curvature} of $E$ at $x$. This function must satisfy the following axioms:
\begin{itemize}
    \item[(A)] Monotonicity: If $E,F\in C^2$ and $x\in \bd E\cap \bd F$ with $E\subseteq F$, then $\kappa(x,E)\ge \kappa(x,F)$;
    \item[(B)] translation invariance: For every $E\in C^2$, $x\in\bd E$ and $y\in\R^N$, it holds $\kappa(x,E)=\kappa(x+y,E+y)$;
    \item[(C)] Continuity: If $E_n\to E$ in $C^2$ and $x_n\in\bd E_n\to x\in\bd E$, then $\kappa(x_n,E_n)\to \kappa(x,E)$.
\end{itemize}
Defining for $x\in\R^N$ and $\rho>0$
\begin{equation}\label{def c}
    \begin{split}
        &\overline c(\rho) = \max_{x\in\bd B_\rho} \max\left\lbrace \kappa(x,B_\rho),  -\kappa(x, B_\rho^c)\right\rbrace,\\
        &\underline c(\rho) = \min_{x\in\bd B_\rho} \min\left\lbrace \kappa(x,B_\rho),  -\kappa(x,B_\rho^c)\right\rbrace,
    \end{split}
\end{equation}
we note that by $(C)$ these functions are continuous in $\rho$. We further require 
\begin{itemize}
    \item[(D)]Curvature of balls: There exists $K>0$ such that $\underline c(\rho)\ge -K>-\infty$.
\end{itemize}
In the following we will focus on the study of the geometric evolution equation
\begin{equation}\label{eq:evol_nonlocal}
V(x,t)=\psi(\nu_{E_t})(x)\G(-\kappa(x,E_t)+\mathtt{f}(t) ),    \quad \text{for $x\in\bd E_t$ and $t>0$},
\end{equation}
starting from an initial bounded set $E_0$ (or an unbounded set with bounded complement), where
  $\psi$ is an anisotropy, $\kappa(\cdot,E_t)$ is a variational curvature in the sense above,  and $\mathtt f$ is a bounded forcing term. Here and in the following, we fix $T>0$ and consider the evolution for $t\in (0,T)$. The functions $\G,\mathtt f$ are required to satisfy the following conditions:
\begin{itemize}
    \item $\G:\R\to \R$  is a continuous, non-decreasing function, with $\G(0)=0$;
    \item $\mathtt f\in C^0_b(\R)$;
\end{itemize}
We then set 
\[ \lim_{s\to -\infty} \G(s)=-a\in [-\infty,0], \qquad  \lim_{s\to +\infty} \G(s)=b\in [0,+\infty]. \]
Consider a function $u:\R^{N}\times [0,+\infty)\to \R$ whose superlevel sets $E_s:=\{ u(\cdot,t)\ge s \}$ evolve according to the nonlinear mean curvature equation \eqref{eq:evol_nonlocal}. By classical computations (see for instance \cite{Gig-book}), the function $u$  satisfies 
\begin{equation}\label{eq:level_set_nonlocal}
    \begin{cases}
        \bd_t u(x,t) - \psi(\nabla u(x,t)) \G(-\kappa(x,\{ u(\cdot,t)\ge u(x,t) \})+\mathtt f(t))=0\\
        u(\cdot,0)=u_0.
    \end{cases}
\end{equation}

Let us recall the notion of viscosity solutions employed in \cite{ChaMorPon15}.
One first introduces a family of auxiliary functions.
\begin{defin}\label{family F}
    Given a curvature $\kappa$ defined as above, we consider a family $\mathcal L$ of functions $\ell\in C^\infty([0,+\infty))$, such that $\ell(0)=\ell'(0)=\ell''(0)=0, \ell(\rho)>0$ for all $\rho$ in a neighborhood of 0, $\ell$ is constant in $[M,+\infty)$ for some $M>0$ (depending on $\ell$), and
    \begin{equation*}
        \lim_{\rho\to 0^+} \ell'(\rho)  \, \G( \overline c(\rho))=0,
    \end{equation*}
where $\overline c$ is as in \eqref{def c}.
\end{defin}
We refer to \cite[Lemma 3.1.3]{Gig-book} for a proof that the the family $\mathcal L$ is not empty. The notion of admissible test function is the following. With a slight abuse of notation, in the   following we will say that a function is spatially constant outside a compact set even if the  value of such constant is time-dependent. 
\begin{defin}\label{def visco sol}
    Let $\hat z=(\hat x,\hat t)\in\R^N\times(0,T)$ and let $A\subseteq (0,T)$ be any open interval containing $\hat t$. We say that $\eta\in C^0(\R^N\times \overline A)$ is admissible at the point $\hat z$ if it is of class $C^2$ in a neighborhood of $\hat z$,  if it is constant out of a compact set, and, in case $\nabla \eta(\hat z)=0$, the following holds: there exists $\ell\in\mathcal L$ and $\omega \in C^\infty([0,+\infty))$ with $\omega'(0)=0,\omega(\rho)>0$ for $\rho>0$ such that 
    \[ 
    |\eta(x,t)-\eta(\hat z)- {\eta_t}(\hat z)(t-\overline t) |\le \ell(|x-\hat x|) + \omega(|t-\hat t|)
    \]
    for all $(x, t)$ in $\R^N\times A$.
\end{defin}
Then, the notion of viscosity solutions employed in \cite{ChaMorPon15} is the following.
\begin{defin}
An upper semicontinuous function $u : \R^N\times [0,T]\to \R$, constant outside a compact set, is a viscosity subsolution of the Cauchy problem \eqref{eq:level_set_nonlocal} if $u(\cdot,0)\le u_0$ and, for all $z:=(x,t)\in\R^N\times[0,T]$ and all $C^\infty$-test functions $\eta $ such that $\eta $ is admissible at $z$ and $u-\eta $ has a maximum at $z$,  the following holds:
\begin{itemize}
    \item[i)] If $\nabla \eta (z)=0$, then
    \begin{equation}\label{eq viscosa degen}
        \eta _t(z)\le 0;
    \end{equation}
    \item[ii)] If $\nabla \eta (z)\neq 0$, then 
    \begin{equation}\label{eq viscosa}
        \bd_t \eta (z) + \psi( \nabla\eta(x,t)) \G(-\kappa(x,\{ \eta(\cdot,t)\ge \eta(z) \})+\mathtt f(t))\le 0.
    \end{equation}
\end{itemize}
A lower semicontinuous function $u : \R^N\times [0,T]\to \R$, constant outside a   compact set, is a viscosity supersolution of the Cauchy problem \eqref{eq:level_set_nonlocal} if $u(\cdot,0)\ge u_0$ and, for all $z:=(x,t)\in\R^N\times[0,T]$ and all $C^\infty$-test functions $\eta $ such that $\eta $ is admissible at $z$ and $u-\eta $ has a minimum at $z$,  the following holds:
\begin{itemize}
    \item[i)] If $\nabla \eta (z)=0$, then $\eta _t(z)\ge 0$,
    \item[ii)] If $\nabla \eta (z)\neq 0$, then $\bd_t \eta (z) + \psi( \nabla\eta(x,t)) \G(-\kappa(x,\{ \eta(\cdot,t)\ge \eta(x,t) \})+\mathtt f(t))\ge 0.$
\end{itemize}
Finally, a function $u$ is a viscosity solution for the Cauchy problem \eqref{eq:level_set_nonlocal} if it is both a subsolution and a supersolution of \eqref{eq:level_set_nonlocal}.
\end{defin}

\begin{oss*}
By classical arguments, one could assume that the maximum of $u-\eta$  is strict in the definition of subsolution above (an analogous remark holds for supersolutions). %Furthermore, in the definition of subsolution we can require $\bd_t \eta(\hat z)/\psi(\nabla \eta(\hat z))>-a$, otherwise the inequality is trivial. Similarly, $\bd_t \eta(\hat z)/\psi(\nabla \eta(\hat z))<b$ for supersolutions.
\end{oss*}

In the rest of the section we will consider a particular instance of generalized curvatures, namely the {variational curvatures} introduced in \cite{ChaMorPon15}. We start by recalling the notion of generalized perimeters.
\begin{defin}\label{def J}
    We will say that a functional $J:\M\to [0,+\infty]$ is a generalized perimeter if it satisfies the following properties: for every $E,E'$ measurable sets and $x\in \R^N$
    \begin{itemize}
        \item[(i)] $J(E)<+\infty$ for every bounded $C^2$-set $E$;
        \item[(ii)] $J(\emptyset)=J(\R^N)=0$;
        \item[(iii)] $J(E)=J(E')$ if $|E\triangle E'|=0$;
        \item[(iv)] $J$ is lower semicontinuous in $L^1_{loc}$;
        \item[(v)] $J$ is submodular, that is
        \begin{equation}\label{eq:submod}
            J(E\cap E') + J(E\cup E')\le J(E) + J(E');
        \end{equation}
        \item[(vi)] $J$ is  translation invariant: for every $E\in C^2$ and $x\in\R^N$ it holds $J(x+E)=J(E)$.
    \end{itemize}
\end{defin}
A generalized perimeter $J$ can be extended to a functional on  $L^1_{loc}(\R^N)$ enforcing  a generalized co-area formula: 
\begin{equation}\label{eq:coarea_J}
    J(u)=\int_{-\infty}^{+\infty} J(\{u\ge s\})\ud s\quad \text{ for every }u\in L^1_{loc}(\R^N).
\end{equation}
It turns out that the functional above is a convex \textit{lsc} functional on $L^1_{loc}(\R^N)$ see \cite{ChaGiaLus}.
\begin{defin}\label{def kappa}
    Given a bounded $C^2$-set $E$ and $x\in \bd E$, we define
    \begin{equation}\label{eq:def_kappa}
            \kappa^+(x,E)  =\inf \left\lbrace \liminf_ {\e\to 0}  \frac{J(E\cup W_\e) - J(E)}{|W_\e\setminus E|} : \overline{W_\e}\overset{\mathcal H}{\to} \{x\}, |W_\e\setminus E|>0  \right\rbrace,
    \end{equation}
    and 
    \[
    \kappa^-(x,E)  =\inf \left\lbrace \liminf_ {\e\to 0}  \frac{J(E) - J(E\setminus W_\e)}{|W_\e\cap E|} : \overline{W_\e}\overset{\mathcal H}{\to} \{x\}, |W_\e\cap E|>0  \right\rbrace,
    \]
    where $\overset{\mathcal H}{\to}$ denotes  Hausdorff convergence. We say that   $\kappa(x,E)$ is the curvature of $E$ at $x$ if $ \kappa^+(x,E) =  \kappa^-(x,E) =: \kappa(x,E).  $
\end{defin}
In the rest of the section we will assume that $\kappa$ exists for all sets of class $C^2$, and furthermore that it satisfies assumption (C) and (D). Assumptions (A) and (B) follow from the assumptions on $J$, furthermore one can prove that the  weak notion of curvature of Definition \ref{def kappa} coincides with the more standard one based on the first variation of the functional $J$, whenever the latter exists (see \cite[Section 4]{ChaMorPon15} for details).

\subsection{The minimizing movements scheme}
We set $g$ as a selection of the set-valued inverse of $\G$, that is $g(x)\in \G^{-1}(x)$ for every $x\in (-a,b)$ and extend it  setting $g=-\infty$ for every $x\le -a$, $g=+\infty$ for every $x\ge b$.  Here, we extended $\G$ to $[-\infty,+\infty]$ setting $\G(\pm\infty)=\lim_{x\to\pm\infty}\G(x).$ We assume also that $g(0)=0$. Note that these definitions  imply $\G\circ g=id$ in $[-a,b].$  Moreover,  $g$ is strictly increasing. 
% \dani{We will denote with $g^*,g_*$ the upper semicontinuous and lower semicontinuous envelope of $g$ (which amounts to choosing the maximal and minimal selection of the inverse $\G^{-1}$)Controllare se serve.}
In the following we will denote for $k\in\N, h>0$
\[ f(kh)= \fint_{kh}^{(k+1)h} \mathtt f(s)\ud s. \]
Given a bounded set $E\in\M$ and $h>0,t\in(0,+\infty)$ we define a functional  on the measurable sets as 
\begin{equation}\label{def mathscr}
\begin{split}
    \mathscr F^E_{h,t}(F)=J(F)+\int_{E\triangle F} \left\lvert g\left( \frac{\sd_E}h  \right)\right\rvert -f([ t/h]h)|F|,
\end{split}
\end{equation} 
where $[\cdot]$ denotes the integer part.
% we have that if $a=+\infty$, then for any $E$ bounded we have that any minimizer of $\mathscr F^E_h$ is a solution of
% \begin{equation}\label{pd equi a inf}
%     \min_F \P(F)+\int_F g\left( \frac{\sd_E}h \right),
% \end{equation}
% as can be seen adding $\int_E g(\sd_E/h)$ to $\mathscr F^E_h$. Analogously, if $b=+\infty$ and $E$ is bounded, any minimizer of $\mathscr F^E_h$ is also a solution of
% \begin{equation}\label{pd equi b inf}
%     \min_F \P(F^c)-\int_{F^c\cap B_{CR}} g\left( \frac{\sd_E}h \right).
% \end{equation}
% Indeed, by Lemma \ref{evolution bounded sets} below, if $E\subseteq B_R$ any minimizer of $\mathscr F^E_h$ is contained in $B_{CR}$. Therefore, since  $E\setminus F=  (F^c\setminus E^c)\cap B_{CR}$ and $F\setminus E=  (E^c\setminus F^c)\cap B_{CR}$, we can add $\int_{E^c\cap B_{CR}}g(\sd_E/h)$ to $\mathscr F^E_h$ we prove the claim above.  
Before proving existence for the functional \ref{def mathscr} we recall the following existence result for a related problem, see \cite[Proposition 6.1]{ChaMorPon15}.
\begin{lemma}\label{lemma existence nonlocal}
    Assume that  $\eta$ is a measurable function satisfying $(-\eta)\vee 0=\eta^-\in L^1(\R^N)$. Then, the problem 
    \begin{equation}\label{def pb J}
        \min\left\lbrace J(F)+\int_F \eta(x)\ud x    \right\rbrace
    \end{equation}
    admits a minimal and a maximal solution (with respect to inclusion). Moreover, if $\eta_1\le \eta_2$ then the minimal (resp. maximal) solution to \eqref{def pb J} with $\eta_1$ replacing $\eta$ contains the minimal (resp. maximal) solution to \eqref{def pb J} with $\eta_2$ replacing $\eta$.
\end{lemma}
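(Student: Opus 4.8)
This is \cite[Proposition~6.1]{ChaMorPon15}, and I sketch the argument. Write $\mathcal E(F):=J(F)+\int_F\eta$ and $m:=\inf_F\mathcal E(F)$. The plan has three parts: produce one minimizer by the direct method in the convex relaxation of the problem; promote submodularity of $\mathcal E$ to a lattice structure on the set of minimizers and extract from it the minimal and maximal ones; finally, run the lattice argument simultaneously on the two energies to obtain the ordering property.

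\textbf{Step 1 (existence of a minimizer).} Testing with $F=\emptyset$ gives $m\le 0$, and $\mathcal E(F)\ge J(F)-\|\eta^-\|_{L^1}\ge-\|\eta^-\|_{L^1}$, so $m$ is finite. For $u\in L^1_{loc}$ with $0\le u\le 1$, the coarea formula \eqref{eq:coarea_J} and Tonelli (licit since $\eta^-\in L^1$) give $J(u)+\int\eta\,u=\int_0^1\mathcal E(\{u\ge s\})\,\mathrm ds\ge m$, so the relaxed infimum equals $m$ as well. Pick a minimizing sequence $u_n=\mathbf 1_{F_n}$; it is bounded in $L^\infty$, hence up to subsequences $u_n\rightharpoonup^* u$ with $0\le u\le 1$. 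Using that the $L^1_{loc}$-extension of $J$ is convex and lsc (hence weak-$*$ $L^\infty$-lsc), that $\int\eta^-u_n\to\int\eta^-u$, and that $\liminf_n\int\eta^+u_n\ge\int\eta^+u$ (test against $(\eta^+\wedge M)\mathbf 1_{B_R}$ and let $M,R\to\infty$), we get $J(u)+\int\eta\,u\le\liminf_n\mathcal E(F_n)=m$; by the coarea identity this forces $\mathcal E(\{u\ge s\})=m$ for a.e.\ $s\in(0,1)$, so minimizers exist.

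\textbf{Step 2 (lattice structure, minimal and maximal solutions).} The volume term is additive, so $\mathcal E$ is submodular, and for minimizers $F,G$ the inequality $\mathcal E(F\cap G)+\mathcal E(F\cup G)\le\mathcal E(F)+\mathcal E(G)=2m$ forces $\mathcal E(F\cap G)=\mathcal E(F\cup G)=m$; thus the collection $\mathcal A$ of minimizers is stable under finite intersections and unions. Fix $\rho(x):=(1+|x|)^{-N-1}>0$, so that $w(F):=\int_F\rho$ lies in $(0,\|\rho\|_{L^1}]$ for every set and is strictly monotone under inclusion. Let $\alpha:=\inf\{w(F):F\in\mathcal A\}$, take $F_n\in\mathcal A$ with $w(F_n)\to\alpha$, and set $G_n:=F_1\cap\cdots\cap F_n\in\mathcal A$; then $(G_n)$ decreases, $\mathbf 1_{G_n}\to\mathbf 1_{G_\infty}$ in $L^1_{loc}$ with $G_\infty:=\bigcap_nG_n$, and $w(G_\infty)=\alpha$. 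Lower semicontinuity of $J$ together with monotone/dominated convergence of $\int_{G_n}\eta^+$ and $\int_{G_n}\eta^-$ yields $\mathcal E(G_\infty)\le\liminf_n\mathcal E(G_n)=m$, so $G_\infty\in\mathcal A$. For any $H\in\mathcal A$ one has $G_\infty\cap H\in\mathcal A$, hence $w(G_\infty\cap H)\ge\alpha=w(G_\infty)$, so $w(G_\infty\setminus H)=0$ and $G_\infty\subseteq H$: thus $G_\infty$ is the minimal solution. The maximal solution is built symmetrically from $\beta:=\sup\{w(F):F\in\mathcal A\}$ using finite unions.

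\textbf{Step 3 (monotonicity in $\eta$) and the main difficulty.} Let $\eta_1\le\eta_2$, $\mathcal E_i:=J+\int_{(\cdot)}\eta_i$, $m_i:=\inf\mathcal E_i$. If $A$ minimizes $\mathcal E_1$ and $B$ minimizes $\mathcal E_2$, then submodularity of $J$ together with $\int_{B\setminus A}(\eta_1-\eta_2)\le 0$ gives
\[
\mathcal E_2(A\cap B)+\mathcal E_1(A\cup B)\le\mathcal E_1(A)+\mathcal E_2(B)=m_1+m_2,
\]
so $A\cap B$ minimizes $\mathcal E_2$ and $A\cup B$ minimizes $\mathcal E_1$. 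Taking $A,B$ to be the minimal minimizers $F_1^-,F_2^-$, minimality of $F_2^-$ gives $F_2^-\subseteq A\cap B\subseteq F_1^-$; taking the maximal minimizers $F_1^+,F_2^+$, maximality of $F_1^+$ gives $F_2^+\subseteq A\cup B\subseteq F_1^+$, which is the asserted ordering. The only delicate point is Step~1: a generalized perimeter controls no $BV$/Sobolev norm and competitors are unbounded, so minimizing sequences of sets need not be precompact; this is circumvented by the convex relaxation on $L^1_{loc}$, where weak-$*$ $L^\infty$ compactness is free and convexity of the coarea extension of $J$ supplies lower semicontinuity, the only extra care being the bookkeeping for the possibly non-integrable part $\eta^+$.
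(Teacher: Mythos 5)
Your argument is correct, and it matches the situation in the paper exactly: the paper offers no proof of this lemma, citing it as \cite[Proposition~6.1]{ChaMorPon15}, and your three steps (direct method on the convex coarea relaxation of $J$ over $\{0\le u\le1\}$, the submodularity lattice trick with a strictly positive integrable weight to extract minimal/maximal minimizers, and the crossed submodularity inequality for the ordering in $\eta$) faithfully reconstruct the argument of that reference. The only points deserving the care you already gave them are the weak-$*$ lower semicontinuity of the extended $J$ (via convexity plus strong $L^1_{loc}$ lower semicontinuity) and the separate treatment of $\eta^+$ and $\eta^-$ in the limit passages, both of which you handle correctly.
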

% \begin{proof}
% We recall the proof for completeness. Consider the problem
%  \[
%     \min_{u\in L^\infty(\R^N;[0,1])}  J(u)+\int_{\R^N} u(x)\eta(x)     \ud x.
%  \]
%  Since the functional $J$ extended to $L^1_{loc}$ according to \eqref{eq:coarea_J} is convex, one can check that the problem above admits a solution.   Then, note that by coarea and Tonelli's theorem it holds 
% \[
% J(u)+\int_{\R^N} u(x)\eta(x)     \ud x =\int_{0}^1 \left(  J(\{ u>s \}) + \int_{\{ u>s \}}\eta(x)\ud x \right),
% \]
% from which we  deduces that for almost every $s\in[0,1]$, $\{ u>s \}$ is a  solution to \eqref{def pb J}. 
% \end{proof}
We then prove existence of minimizers to $\mathscr F^E_{h,t}$. The proof of the boundedness of minimizers has been taken from \cite{Morini-notes}.
\begin{lemma}\label{lemma existence}
    Let $E\in \mathscr M$ be a bounded set and $h>0,t\in [0,+\infty)$. Then, there exist minimizers of $\mathscr F^E_{h,t}$ and, denoting $E'$ one such minimizer, it has the following properties: it is a bounded set such that (up to negligible sets)
    $$ E_{-ah} \subseteq E'\subseteq E_{bh}.$$
    Moreover, there exist a maximal and a minimal minimizer (with respect to inclusion) of $\mathscr F^E_{h,t}$.
\end{lemma}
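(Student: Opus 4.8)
The plan is to reduce the minimization of $\mathscr F^E_{h,t}$ to the auxiliary problem \eqref{def pb J} treated in Lemma \ref{lemma existence nonlocal}. First I would rewrite the functional in the form covered by that lemma. Set $\eta(x) := \left\lvert g\!\left(\frac{\sd_E(x)}{h}\right)\right\rvert \operatorname{sign}(\dist^\psi_{E^c}(x) - \dist^\psi_E(x)) - f([t/h]h)$; more precisely, using $E\triangle F = (F\setminus E)\cup(E\setminus F)$, one has $\int_{E\triangle F}\left\lvert g\!\left(\frac{\sd_E}h\right)\right\rvert - f([t/h]h)|F| = \int_F \eta(x)\ud x + c_E$, where $c_E = \int_E \left\lvert g\!\left(\frac{\sd_E}h\right)\right\rvert\ud x$ is a constant independent of $F$ (finite since $E$ is bounded and $g$ is locally bounded away from $E_{-ah}$ and $E_{bh}$, the regions where $\eta = \pm\infty$). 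Here $\eta$ takes the value $+\infty$ on $E_{-ah}^c$... — one must be careful about signs: $\eta$ equals $-|g(\sd_E/h)| - f$ inside $E$ and $+|g(\sd_E/h)| - f$ outside $E$, with the convention that $\eta=+\infty$ where $g=+\infty$ (i.e. on $\{\sd_E \ge bh\}$, forcing $F$ to avoid this set) and $\eta=-\infty$ where $g=-\infty$ (i.e. on $\{\sd_E \le -ah\}$, forcing $F$ to contain this set). Since $g(0)=0$ and $g$ is monotone, $\eta^- \in L^1$: the negative part is supported in the bounded set $\{\sd_E < 0\}\cup\{$bounded forcing contribution$\}$, hence integrable.

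Second, I would apply Lemma \ref{lemma existence nonlocal} to $J(F) + \int_F \eta$: this yields existence together with the maximal and minimal minimizers. Minimizing $\mathscr F^E_{h,t}$ is equivalent (the two functionals differ by the constant $c_E$), so the maximal and minimal minimizers transfer directly, and any minimizer $E'$ is bounded because $\eta \ge -\|f\|_\infty =: -c > -\infty$ outside a bounded set, so $J(E') + \int_{E'}\eta \ge J(E') - c|E'|$; comparing with the competitor $E'\cap B_R$ for large $R$ (using submodularity \eqref{eq:submod} of $J$ with $F = B_R$, $J(B_R)=0$ after... actually $J(B_R)<+\infty$ and $J(\R^N)=0$) forces $|E'\setminus B_R| = 0$ for $R$ large — this is the argument attributed to \cite{Morini-notes} and is the one routine-but-delicate point.

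Third, I would prove the inclusions $E_{-ah}\subseteq E' \subseteq E_{bh}$. For the lower inclusion: on $E_{-ah} = \{\sd_E \le -ah\}$ we have $\sd_E/h \le -a$, so $g(\sd_E/h) = -\infty$, i.e. $\eta = -\infty$ there; any $F$ with $|F \cap E_{-ah}| < |E_{-ah}|$... gives $\int_F \eta = -\infty$ only helps, so actually I should argue the other way: if $E'$ failed to contain $E_{-ah}$ up to null sets, replacing $E'$ by $E' \cup E_{-ah}$ would strictly decrease $\int \eta$ by $-\infty$ while increasing $J$ by a finite amount (submodularity again, comparing against $E_{-ah}$ which is a sublevel set of a distance function hence has finite $J$ after suitable truncation) — contradiction. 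Symmetrically, if $|E' \setminus E_{bh}| > 0$, then on that set $\eta = +\infty$, so $\int_{E'}\eta = +\infty$, contradicting minimality (the competitor $\emptyset$ or $E_{-ah}$ has finite energy). Alternatively, and more cleanly, I would invoke the comparison/monotonicity part of Lemma \ref{lemma existence nonlocal}: compare $\eta$ with the modified datum that is $-\infty$ on $E_{-ah}$ and $+\infty$ elsewhere (resp. $+\infty$ on $E_{bh}^c$), whose minimizers are exactly $E_{-ah}$ (resp. $E_{bh}$), to sandwich $E'$.

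The main obstacle I anticipate is the careful bookkeeping of the $\pm\infty$ values of $\eta$: one must check that the reduction to Lemma \ref{lemma existence nonlocal} is legitimate despite $\eta$ not being real-valued, which requires verifying that minimizers automatically respect the hard constraints $E_{-ah}\subseteq F \subseteq E_{bh}$ and that on the complement of these extreme regions $\eta$ is finite with $\eta^-\in L^1$. The boundedness argument and the use of submodularity to justify the truncation competitors are standard but need the finiteness $J(E_\delta)<+\infty$ for the relevant sublevel sets $E_\delta$, which follows since $E\in\mathscr M$ bounded implies $E_\delta$ has $C^{1,1}$ (indeed Lipschitz, and $C^2$-approximable) boundary for a.e. $\delta$, and $J$ is finite on bounded $C^2$-sets by Definition \ref{def J}(i) together with lower semicontinuity \eqref{eq:coarea_J}.
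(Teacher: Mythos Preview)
Your reduction to Lemma~\ref{lemma existence nonlocal} has a genuine gap when $a<+\infty$. The rewriting
\[
\int_{E\triangle F}\Bigl|g\Bigl(\tfrac{\sd_E}h\Bigr)\Bigr| - f([t/h]h)\,|F| \;=\; \int_F \eta \;+\; c_E
\]
with $c_E=\int_E|g(\sd_E/h)|$ is only formal: on $E_{-ah}=\{\sd_E\le -ah\}$ one has $g(\sd_E/h)=-\infty$, so whenever $|E_{-ah}|>0$ the constant $c_E$ equals $+\infty$ and the identity is meaningless. For the same reason your $\eta$ satisfies $\eta=-\infty$ on $E_{-ah}$, hence $\eta^-\notin L^1$, and Lemma~\ref{lemma existence nonlocal} does not apply as stated. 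Your remark that ``the negative part is supported in a bounded set, hence integrable'' is precisely where this fails. The paper confronts exactly this obstruction: it truncates from below, setting $g_n:=g(\sd_E/h)\vee(-n)$, applies Lemma~\ref{lemma existence nonlocal} to $\eta=g_n-c$ (now with $\eta^-$ bounded), obtains an increasing sequence of minimal minimizers $E_n$, and passes to the limit via lower semicontinuity of $J$ and Fatou to exhibit a minimizer of $\mathscr F^E_{h,t}$. The approximation by $g_n$ is the missing idea in your plan; treating the $\pm\infty$ values as ``hard constraints'' does not substitute for it, since the existence lemma you invoke gives nothing in that regime. (The paper later remarks explicitly, after \eqref{pb equi inf}, that the equivalence you use can fail when $\int_E g(\sd_E/h)$ is infinite.)

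A second, smaller gap concerns boundedness when $b=+\infty$. Your sketch ``compare with $E'\cap B_R$ using submodularity'' yields only $|E'\setminus B_R|\le J(B_R)/M$ for $R$ large and $M$ chosen so that $|g|\ge M$ outside $B_R$; this gives finite measure, not boundedness. The paper's argument (the paragraph titled ``End of proof of Lemma~\ref{lemma existence}'') is genuinely different: assuming an unbounded sequence $x_n$ of density-one points of a minimizer, it uses \emph{translation invariance} together with the comparison Lemma~\ref{lemma comparison function} to show that all translates $\tilde E+\tau$, $|\tau|\le R$, lie in a fixed minimizer $E_M$ of an auxiliary problem, forcing infinitely many disjoint balls into $E_M$ and contradicting finiteness of its energy. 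This step is not routine and is not captured by your truncation-by-$B_R$ idea.

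Your treatment of the inclusions $E_{-ah}\subseteq E'\subseteq E_{bh}$ and of the existence of maximal/minimal minimizers is fine and matches the paper.
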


\begin{proof}
We fix $h>0$ and $t\in (0,T)$, and $c=f([t/h]h)$.   
Let $n\in\N$ and denote $g_n:=g(\frac{\sd_E}h)\vee -n$ and $\tilde g:=g(\tfrac{\sd_E}h)$. We note that  $g_n^-\in L^1_{loc}$, thus Lemma \ref{lemma existence nonlocal} implies that the functional 
\[
 J(F)+\int_F (g_n-c)
\]
admits a minimal minimizer $E_n$. Since $\int_{E}g_n$ is finite, one can check that $E_n$ minimizes also 
\begin{equation}\label{pb min nonlocal}
     J(F)+\int_{E\triangle F} |g_n|-c|F|.
\end{equation}
Note that $E_n\subseteq E_{n+1}$  by Lemma \ref{lemma existence nonlocal}, therefore  $E_n\to E'=\bigcup_{n\in\N} E_n$ in $L^1_{loc}$. Since $|\tilde g|$ is coercive, there exists $R>0$ such that $|\tilde g|\ge 2\|f\|_{L^\infty(\R)} {+1}$ in $B_R^c$ and $E\subseteq B_R$. Testing \eqref{pb min nonlocal} with $\emptyset$, we deduce  
\[ 
    0\ge J(E_n)+\int_{E_n} (g_n-c)\ge  { (\|f\|_{L^\infty(\R)}+1)} |E_n\setminus B_R|, 
\]
that implies $E_n\subseteq B_R$ for every $n\in\N$.
By semicontinuity and Fatou's lemma we get 
\[  \mathscr F^E_{h,t}(E')\le \lim_{n\to\infty} J(E_n) + \int_{ E_n\triangle E} |g_n|-c|E_n|.  \]
Since $|g_n|\le |\tilde g|$, we conclude that $E'$ is a minimizer of $\mathscr F^E_{h,t}$. 
By classical arguments, one can check that   if $E'_1,E'_2$ are minimizers of $\mathscr F^E_{h,t}$, then so are $E'_1\cap E'_2,E'_1\cup E'_2$, implying the existence of a minimal and a maximal solution (see e.g. \cite[Proposition 6.1]{ChaMorPon15}).

Let now $\tilde E$ denote a minimizer of $\mathscr F^E_{h,t}$. Since $\tilde E$ has finite energy, it is straightforward to check that $|\tilde E|<+\infty$ and  $\sd_E\in [-ah,bh]$ a.e. on $\tilde E\triangle E$. If $b<+\infty$ this clearly  implies that $\tilde E$ is bounded; if $b=+\infty$ we use a different argument. We first prove some preliminary results.
\end{proof}

 {The first one is a comparison principle, in the spirit of \cite{LucStu}.}
\begin{lemma}[Weak comparison principle]\label{comparison principle}
    Fix $h>0,t\in (0,+\infty)$ and assume that $F_1,F_2$ are bounded sets with $F_1\subset\joinrel\subset F_2$. Then, for any two minimizers $E_i$ of $\mathscr F^{F_i}_{h,t}$ for $i=1,2$,  we have $E_1\subseteq E_2$. If, instead, $F_1\subseteq F_2$, then we have that the minimal (respectively, maximal) minimizer of $\mathscr F^{F_1}_{h,t}$ is contained in the minimal (respectively, maximal)  minimizer of $\mathscr F^{F_2}_{h,t}$. 
\end{lemma}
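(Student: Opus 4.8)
The strategy is the classical "cross-over" argument used for minimizing movements schemes (as in \cite{LucStu,ChaMorPon15}), adapted to the presence of the nonlinearity $g$ and the forcing term. The key algebraic observation is the submodularity inequality \eqref{eq:submod} for $J$, combined with the monotonicity of the distance-dependent integrand under the strict inclusion $F_1 \subset\joinrel\subset F_2$. The plan is as follows.

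First I would fix minimizers $E_1$ of $\mathscr F^{F_1}_{h,t}$ and $E_2$ of $\mathscr F^{F_2}_{h,t}$ and compare the competitor pair $(E_1 \cap E_2, E_1 \cup E_2)$ against $(E_1, E_2)$. By minimality of $E_1$ we have $\mathscr F^{F_1}_{h,t}(E_1) \le \mathscr F^{F_1}_{h,t}(E_1 \cap E_2)$, and by minimality of $E_2$ we have $\mathscr F^{F_2}_{h,t}(E_2) \le \mathscr F^{F_2}_{h,t}(E_1 \cup E_2)$. Adding these two inequalities, the perimeter terms combine via submodularity, since $J(E_1) + J(E_2) \ge J(E_1 \cap E_2) + J(E_1 \cup E_2)$, so the $J$-contributions cancel favourably. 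The volume terms $-f([t/h]h)|F|$ also recombine exactly, because $|E_1| + |E_2| = |E_1 \cap E_2| + |E_1 \cup E_2|$ and the coefficient $c = f([t/h]h)$ is the same in both functionals. What remains is an inequality purely among the dissipation integrals, of the form
\[
\int_{F_1 \triangle E_1} \left\lvert g\!\left(\tfrac{\sd_{F_1}}{h}\right)\right\rvert + \int_{F_2 \triangle E_2}\left\lvert g\!\left(\tfrac{\sd_{F_2}}{h}\right)\right\rvert \le \int_{F_1 \triangle (E_1\cap E_2)}\left\lvert g\!\left(\tfrac{\sd_{F_1}}{h}\right)\right\rvert + \int_{F_2 \triangle(E_1\cup E_2)}\left\lvert g\!\left(\tfrac{\sd_{F_2}}{h}\right)\right\rvert.
\]
The next step is to localize this: the symmetric differences differ only on the set $A := (E_1 \setminus E_2)$ — this is precisely the region where the crossover changes which set each point belongs to — and after cancelling the common contributions the inequality reduces to a comparison of $\int_A |g(\sd_{F_1}/h)|$ against $\int_A |g(\sd_{F_2}/h)|$ (with appropriate signs coming from whether points of $A$ lie inside or outside $F_1$ resp. $F_2$). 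Here I would use that $F_1 \subset\joinrel\subset F_2$ implies $\sd_{F_2} \le \sd_{F_1} - d$ pointwise for some $d>0$ (a strict gap, since $F_1,F_2$ are bounded), hence, writing the integrand with the correct sign of $\sd$, the monotonicity of $g$ forces $|A| = 0$, i.e. $E_1 \subseteq E_2$ up to negligible sets. One must be a little careful tracking signs, because $g$ can take the values $\pm\infty$ where $\sd/h \le -a$ or $\ge b$; but on $F_i \triangle E_i$ the distance stays in $[-ah,bh]$ by the argument already given in the proof of Lemma \ref{lemma existence}, so $g$ is finite there and strictly monotone, which is exactly what is needed.

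For the second assertion, with only $F_1 \subseteq F_2$ (not strict), the strict gap $d>0$ disappears and one cannot conclude $|A|=0$ directly. The standard fix is an approximation by strict inclusions: I would fix a small ball $B_r$ and consider $F_1^r := F_1 \cap (\text{something slightly shrunk})$ or rather exploit monotonicity in the data — use Lemma \ref{lemma existence nonlocal} together with the observation that $\sd_{F_1} \ge \sd_{F_2}$ when $F_1 \subseteq F_2$, so $|g(\sd_{F_1}/h)| $ and $|g(\sd_{F_2}/h)|$ are ordered in the sense that makes the integrands $g(\sd_{F_i}/h) - c$ (appearing in the reformulation \eqref{pb min nonlocal}) monotone; then the monotonicity statement in Lemma \ref{lemma existence nonlocal} gives directly that the minimal (resp. maximal) minimizer for $F_1$ is contained in that for $F_2$. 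Alternatively one perturbs $F_2$ to $F_2 \cup B_\rho(\cdot)$-type enlargements that make the inclusion strict, applies the first part, and passes to the limit $\rho \to 0$ using the stability of minimal/maximal minimizers — but the argument via Lemma \ref{lemma existence nonlocal} is cleaner.

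I expect the main obstacle to be the bookkeeping of signs and of the $\pm\infty$ values of $g$ in the localized dissipation inequality: one has to be sure that on every region where the crossover competitor differs from the original, the relevant signed distance lies in the finite range $[-ah,bh]$ so that strict monotonicity of $g$ is genuinely available, and that the direction of the inequality $\sd_{F_2} < \sd_{F_1}$ translates into the right direction after taking absolute values and accounting for whether we are adding or removing mass. Everything else — submodularity, cancellation of volume terms, $L^1_{loc}$-limits — is routine.
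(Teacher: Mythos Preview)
Your plan is correct and follows essentially the same route as the paper's proof: test $E_1$ against $E_1\cap E_2$ and $E_2$ against $E_1\cup E_2$, use submodularity of $J$ and cancellation of the volume terms, localize the remaining dissipation inequality to $A=E_1\setminus E_2$, and conclude $|A|=0$ from the strict inequality $\sd_{F_2}<\sd_{F_1}$ together with strict monotonicity of $g$. One small remark: for the non-strict case $F_1\subseteq F_2$ the paper simply says ``reasoning by approximation''; your alternative via Lemma~\ref{lemma existence nonlocal} is cleaner but only applies directly when the reformulation \eqref{pb equi inf} is available (i.e.\ when $\int_{F_i} g(\sd_{F_i}/h)$ is finite, which fails for $a<+\infty$), so in general the approximation route is the one that works without further case distinctions.
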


\begin{proof}
Firstly, we assume  $F_1\subset\joinrel\subset F_2$, Testing the minimality of $E_1, E_2$ with their intersection and union, respectively, we obtain
\begin{align*}
   &\P(E_1)+\int_{(E_1\setminus E_2)\setminus F_1} g\left( \frac{\sd_{F_1}}h  \right)+\int_{ (E_1\setminus E_2)\cap F_1} g\left( \frac{\sd_{F_1}}h  \right)\le  \P (E_1\cap E_2) + f([t/h]h)|E_1\setminus E_2| \\
    &\P (E_2)\le \P (E_1\cup E_2)+\int_{(E_1\setminus E_2)\setminus F_2} g\left( \frac{\sd_{F_2}}h  \right)+\int_{(E_1\setminus E_2)\cap F_2} g\left( \frac{\sd_{F_2}}h  \right) - f([t/h]h)|E_1\setminus E_2|.
\end{align*}
Summing the two inequalities above and using the submodularity of $J$ we get 
\begin{multline}\label{ineq comp}
     \int_{(E_1\setminus E_2)\setminus F_1} g\left( \frac{\sd_{F_1}}h  \right) + \int_{(E_1\setminus E_2)\cap F_1} g\left( \frac{\sd_{F_1}}h  \right)\\
     \le  \int_{(E_1\setminus E_2)\cap F_2} g\left( \frac{\sd_{F_2}}h  \right)  + \int_{(E_1\setminus E_2)\setminus F_2} g\left( \frac{\sd_{F_2}}h  \right).
\end{multline}
Assume by contradiction that $|E_1\setminus E_2|>0$. Since $\sd_{F_2}<\sd_{F_1}$ and by the strict monotonicity of $g$, we estimate the \textit{rhs} of \eqref{ineq comp} by
% \[   \int_{(E_1\setminus E_2)\cap F_2} g\left( \frac{\sd_{F_2}}h  \right) - \int_{(E_1\setminus E_2)\cap F_1} g\left( \frac{\sd_{F_1}}h  \right)< \int_{(E_1\setminus E_2)\cap (F_2\setminus F_1)} g\left( \frac{\sd_{F_2}}h  \right) \le 0.\]
\[  \int_{(E_1\setminus E_2)\setminus F_2} g\left( \frac{\sd_{F_2}}h  \right)+ \int_{(E_1\setminus E_2)\cap F_2} g\left( \frac{\sd_{F_2}}h  \right) < \int_{(E_1\setminus E_2)\setminus F_2} g\left( \frac{\sd_{F_1}}h  \right) + \int_{(E_1\setminus E_2)\cap F_1} g\left( \frac{\sd_{F_1}}h  \right)\]
and plug it in \eqref{ineq comp} to reach the desired contradiction. The other cases follow analogously, reasoning by approximation if $F_1\subseteq F_2$.
\end{proof}

\begin{lemma}\label{lemma comparison function}
    Let $c\in \R$. Consider a bounded set  $E\in\M$ and  non-decreasing functions $g_1,g_2 :\R\to \R$ such  that $g_1<g_2$ in $\R\setminus \{0\}$ and $g_1(0)=g_2(0)=0$. Then, if $E_i$ solves
    \[  \min_F \left\lbrace \P(F)+\int_{E\triangle F}\left\lvert g_i(\sd_E(x))\right\rvert\ud x  +c|F|\right\rbrace \]
    for $i=1,2$, we have that $E_2\subseteq E_1$. If $g_1\le g_2$ instead,  an analogous statement holds for the maximal and minimal solutions.
\end{lemma}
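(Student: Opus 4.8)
The plan is to reduce the statement, as in the proof of the weak comparison principle (Lemma \ref{comparison principle}), to a coupling argument obtained by testing the two minimality conditions against intersection and union, after first rewriting both energies in the ``localized'' form used in Lemma \ref{lemma existence nonlocal}. I would observe that, since $E$ is bounded, $\sd_E$ ranges over a compact interval $[-R,0]$ on $E$, and each $g_i$, being non-decreasing and real-valued, is bounded there; hence $C_i:=-\int_E g_i(\sd_E)$ is a finite constant. Splitting $E\triangle F=(F\setminus E)\cup(E\setminus F)$ and recalling that $\sd_E\ge 0$ (resp.\ $\sd_E\le 0$) outside (resp.\ inside) $E$, so that $|g_i(\sd_E)|=g_i(\sd_E)$ on $F\setminus E$ while $|g_i(\sd_E)|=-g_i(\sd_E)$ on $E\setminus F$, one gets the identity
\[
\mathscr G_i(F):=\P(F)+\int_{E\triangle F}|g_i(\sd_E)|\ud x+c|F|=\P(F)+\int_F\eta_i\ud x+C_i,\qquad \eta_i:=g_i(\sd_E)+c .
\]
Thus $E_i$ is a minimizer of $F\mapsto\P(F)+\int_F\eta_i$, and $g_1\le g_2$ gives $\eta_1\le\eta_2$.

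Next I would run the comparison. Let $E_i$ be any minimizer of $\mathscr G_i$, $i=1,2$. Testing the minimality of $E_2$ against $E_1\cap E_2$ and of $E_1$ against $E_1\cup E_2$ yields $\mathscr G_2(E_2)\le\mathscr G_2(E_1\cap E_2)$ and $\mathscr G_1(E_1)\le\mathscr G_1(E_1\cup E_2)$. Summing, using submodularity $\P(E_1\cap E_2)+\P(E_1\cup E_2)\le\P(E_1)+\P(E_2)$ together with the elementary identity $\int_{E_1\cap E_2}\eta_2+\int_{E_1\cup E_2}\eta_1=\int_{E_1}\eta_1+\int_{E_2}\eta_2+\int_{E_2\setminus E_1}(\eta_1-\eta_2)$, one arrives at $\mathscr G_1(E_1)+\mathscr G_2(E_2)\le\mathscr G_1(E_1)+\mathscr G_2(E_2)+\int_{E_2\setminus E_1}(\eta_1-\eta_2)$. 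Since all quantities involved are finite — this uses $E$ bounded, $|E_i|<+\infty$, submodularity of $\P$ (to bound $\P(E_1\cap E_2),\P(E_1\cup E_2)$), and $g_1\le g_2$, which together make every integral above finite — and since $\eta_1-\eta_2\le 0$, this forces
\[
\int_{E_2\setminus E_1}\bigl(g_2(\sd_E)-g_1(\sd_E)\bigr)\ud x=0 ,
\]
hence $g_1(\sd_E)=g_2(\sd_E)$ a.e.\ on $E_2\setminus E_1$; moreover, tracing the chain of equalities backwards, $E_1\cap E_2$ is again a minimizer of $\mathscr G_2$ and $E_1\cup E_2$ is again a minimizer of $\mathscr G_1$.

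Finally I would conclude. If $g_1<g_2$ on $\R\setminus\{0\}$, then $g_1(\sd_E)=g_2(\sd_E)$ a.e.\ on $E_2\setminus E_1$ forces $\sd_E=0$ a.e.\ there; as $\{\sd_E=0\}=\partial E$ is Lebesgue-negligible, $|E_2\setminus E_1|=0$, i.e.\ $E_2\subseteq E_1$. If only $g_1\le g_2$, take $E_1$ to be the maximal (resp.\ minimal) minimizer of $\mathscr G_1$ and $E_2$ the maximal (resp.\ minimal) minimizer of $\mathscr G_2$: since $E_1\cup E_2$ minimizes $\mathscr G_1$ and $E_1\cap E_2$ minimizes $\mathscr G_2$, maximality (resp.\ minimality) gives $E_1\cup E_2\subseteq E_1$ (resp.\ $E_1\cap E_2\supseteq E_2$), which in either case means $E_2\subseteq E_1$. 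The one substantive point, exactly as in Lemma \ref{comparison principle}, is the passage from the (forced equality) bound on $\int_{E_2\setminus E_1}(g_2-g_1)(\sd_E)$ to the inclusion, via the strict inequality $g_1<g_2$; everything else is bookkeeping, and one could equivalently carry out the same computation directly on $\mathscr F$ without the reformulation.
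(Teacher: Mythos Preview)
Your proof is correct and follows essentially the same approach as the paper: test $E_1$ against $E_1\cup E_2$ and $E_2$ against $E_1\cap E_2$, combine via submodularity of $J$, and read off $0\le\int_{E_2\setminus E_1}(g_1-g_2)(\sd_E)$; your preliminary rewriting as $\P(F)+\int_F\eta_i+C_i$ is, as you yourself note, purely cosmetic and equivalent to the paper's direct manipulation of the symmetric-difference integrals. The only genuine difference is in the non-strict case $g_1\le g_2$: the paper dismisses it with ``by approximation'', whereas you observe that the chain of inequalities collapsing to equalities forces $E_1\cup E_2$ (resp.\ $E_1\cap E_2$) to be a minimizer of $\mathscr G_1$ (resp.\ $\mathscr G_2$), so extremality of the maximal/minimal solutions gives the inclusion directly --- this is a cleaner route than building an approximating sequence.
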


\begin{proof}
    Denote $ {\tilde g_i}=g_i\circ \sd_E$ for $i=1,2$  and assume by contradiction that $|E_2\setminus E_1|>0$. Reasoning as in Lemma \ref{comparison principle}, one gets
    \begin{equation*}
        \int_{E_1\triangle E}  |  {\tilde g_1}|+\int_{E_2\triangle E} | {\tilde g_2}|\le  \int_{(E_1\cup E_2)\triangle E}| {\tilde g_1}| + \int_{(E_1\cap E_2)\triangle E} | {\tilde g_2}|.
    \end{equation*}
    Simplifying\footnote{Noting that 
    \begin{align*}
        E_1\triangle E&= ((E_1\setminus E_2)\setminus E)\cup ((E_1\cap E_2)\setminus E) \cup ((E\setminus E_1)\setminus E_2) \cup ((E\cap E_2)\setminus E_1)\\
        (E_1\cup E_2)\triangle E&= (E_2\setminus E_1\setminus E) \cup ((E_1\cap E_2)\setminus E)  \cup ((E_1\setminus E_2)\setminus E)  \cup ((E\setminus E_1)\setminus E_2)\\
        (E_1\cap E_2)\triangle E&=((E_2\cap E_1)\setminus E) \cup ((E\setminus E_1)\setminus E_2) \cup  ((E\cap E_1)\setminus E_2)  \cup ((E\cap E_2)\setminus E_1).
    \end{align*}
    } the above expression and recalling that $ {\tilde g_i}\ge 0$ on $E^c$, $ {\tilde g_i}\le 0$ on $E$, we reach
    \begin{equation*}
        0\le \int_{(E_2\setminus E_1)\setminus E} ( {\tilde g_1}- {\tilde g_2}) +\int_{(E_2\setminus E_1)\cap E} ( {\tilde g_1-\tilde g_2})=\int_{E_2\setminus E_1} ( {\tilde g_1-\tilde g_2}),
    \end{equation*}
    which implies the contradiction. The case $g_1\le g_2$ follows by approximation.
\end{proof}

We can then conclude the proof of the boundedness of minimizers to $\mathscr F^E_{h,t}$. 
\begin{proof}[End of proof of Lemma \ref{lemma existence}]
We prove that any minimizer $\tilde E$ of $\mathscr F^E_{h,t}$ is bounded.
Recall  that $|\tilde E|<+\infty$. 
We assume by contradiction the existence of points $\{x_n\}_{n\in\N}\subseteq \R^N$ of density one for $\tilde E$, with $|x_n|\to +\infty$ as $n\to +\infty$.  {For fixed} $M>0$, since $|g|$ is coercive there exists $R>0$ such that $|\tilde g|\ge M$ in $ B_R^c$. We can assume that $E\subseteq B_{R}$, and, up to extracting an unrelabelled subsequence,  that $|x_n-x_m|> 2R$ for $n\neq m$ and $|x_n|> 3R$ for all $n\in\N$. We note that 
\[ 
M\chi_{B_{2R}^c}< |\tilde g|(\cdot +\tau) \quad \text{ for all }|\tau|\le R.
\]
Let us denote by $E_M$ a minimizer of 
\[
J(F)+ \int_{F\triangle E}  M\chi_{ B_{2R}^c } = J(F) + M|F\setminus B_{2R}|.
\]
By translation invariance $\tilde E+\tau$ minimizes   \eqref{def mathscr} with $|\tilde g|(\cdot +\tau) $ substituting $|\tilde g|$, thus by comparison
\[ 
\tilde E+\tau\subseteq E_M \quad \text{ for all }|\tau|\le R.
\]
In particular, the  disjoint balls $B_R(x_n)$ are all contained (up to negligible sets) in $E_M$. This implies
\[
J(E_M)+ M|E_M\setminus B_{2R}| \ge M|\bigcup_{n\in\N} B_R(x_n) |=+\infty,
\]
a contradiction.
\end{proof}

If $ {\int_{E}g(\sd_E/h)}<+\infty$,  minimizers of $\mathscr F^E_{h,t}$  minimize also the functional
\begin{equation}\label{pb equi inf}
    F\mapsto \P(F)+\int_F  g\left( \tfrac1h\sd_E\right)-f([t/h]h)|F|,
\end{equation}
as can be see adding the (constant term) $ {\int_{E}g(\sd_E/h)}$ to the functional $\mathscr F^E_{h,t}$. 
In the present setting, since  $ {\int_{E}g(\sd_E/h)}$ may be infinite in the case $a<+\infty$, we can not draw this conclusion straightforwardly.  We can nonetheless recover the minimal and the maximal solution to \eqref{incremental pb} by means of a sequence of minimizers of a functional similar to \eqref{pb equi inf}, essentially as in the proof of Lemma \ref{lemma existence}.

% Recall the following definition.

% \begin{defin}
%     A set of finite perimeter $E$ is a $(\Lambda,r_0)-$minimizer of the perimeter ($\Lambda\ge 0, r_0>0$) in an open set $\Omega$ if the following holds: for every ball $B_r(x)\subset \Omega$ , with $0 < r \le r_0$ and any $F\subset \R^N$ such that  $E\triangle F\subset\joinrel\subset  B_r(x)$ we have
%     \[ P(E;B_r(x))\le P(F; B_r(x))+ \Lambda|E\triangle F|. \]
% \end{defin}

% We recall a well-known compactness result, see e.g. \cite[Ch. 21]{Mag-book}.

% \begin{teo} \label{teo conv lambda min}
% Let $\Omega\subset \R^N$ be an open set and $\{E_n\}$ a sequence of locally finite perimeter sets contained in $\Omega$ satisfying the following property: there exists $r_0 > 0$ such that for every $n$,
% $E_n$ is a perimeter $(\Lambda_n, r_0)-$minimizer in $\Omega$, with $\Lambda_n\to \Lambda\in [0, +\infty)$. Then there exist $E\subset \Omega$ of locally finite perimeter and a subsequence  $\{n_k\}$ such that 

% \begin{itemize}
%     \item[(i)] $ E $ is a $(\Lambda, r_0)-$minimizer in $\Omega$;
%     \item[(ii)] $E_{n_k}\to E$ in $L^1_{loc}$;
%     \item[(iii)] $\bd E_{n_k}\to C$ in the Kuratowski sense for some closed set $C$ such that $C\cap \Omega=\bd E\cap \Omega$;
% \end{itemize} 
% %(iv) HN−1 (∂Enk ∩ Ω) ∗* HN−1 (∂E ∩ Ω) weakly* in Ω in the sense of measures.
% \end{teo}

 { For a given bounded set $E\in\M$ and $t\in(0,+\infty)$, we  denote
\begin{equation}\label{incremental pb}
	T_{h,t}^- E=\min\text{argmin}\ \mathscr F_{h,t}^E,\qquad T_{h,t}^+E=\max \text{argmin}\ \mathscr F_{h,t}^E,
\end{equation}
where the minimum and maximum above are made with respect to inclusion. We will often denote $T_{h,t}:=T_{h,t}^-$. 
From the previous results, we deduce this corollary.}

\begin{cor}\label{corollary approx}
    Assume $a<+\infty$. Let $E\in\M$ be a bounded set and $t\in(0,+\infty),h>0$. Then, there exists a sequence of uniformly bounded sets $(E_n)_{n\in\N}$ such that $E_n\nearrow T^-_{h,t} E$  and for any $n\in\N,$ $E_n$ is a minimizer of
    \begin{equation}\label{funct coroll}
         F\mapsto \P(F)+\int_{F}  g\left(  \frac{\sd_E}h  \right)\vee (-n) -f([t/h]h)|F|=:\mathscr F^{E,n}_{h,t}(F). 
    \end{equation}
    Analogously, there exists a sequence of uniformly bounded sets $(E_n)_{n\in\N}$ such that $E_n\searrow T^+_{h,t} E$ in $L^1$ and for any $n\in\N,$ $E_n$ is a solution to
    \begin{equation}\label{funct coroll2}
         \min \left\lbrace \P(F)+\int_{B_R\setminus F}  g\left(  \frac{\sd_E}h  \right)\wedge n -f([t/h]h)|F| : F\subseteq B_R \right\rbrace,
    \end{equation}
    where $\t E\subseteq B_R$.
\end{cor}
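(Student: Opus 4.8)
The plan is to obtain the two sequences as the minimal (respectively maximal) minimizers of the truncated functionals and then to identify their monotone limit with $T^-_{h,t}E$ (respectively $T^+_{h,t}E$) via a comparison argument based on the submodularity of $J$. Write $c=f([t/h]h)$, $\tilde g=g(\sd_E/h)$ and $g_n=\tilde g\vee(-n)$, so that $\mathscr F^{E,n}_{h,t}(F)=J(F)+\int_F(g_n-c)$ and $n\mapsto g_n$ is non-increasing with $g_n\downarrow\tilde g$. Since $g_n\ge -n$ and $g_n\ge 0$ on $E^c$, the function $(g_n-c)^-$ is bounded and supported in the bounded set $\{g_n<c\}$, hence lies in $L^1(\R^N)$; by Lemma~\ref{lemma existence nonlocal} the functional $\mathscr F^{E,n}_{h,t}$ has a minimal minimizer $E_n$, of finite measure (by comparison with $\emptyset$). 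As $n\mapsto g_n-c$ is non-increasing, the monotonicity statement in Lemma~\ref{lemma existence nonlocal} gives $E_n\subseteq E_{n+1}$; set $E'=\bigcup_n E_n$.

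The key step is the inclusion $E_n\subseteq T:=T^-_{h,t}E$ for all $n$. Since $E_n$ is the \emph{minimal} minimizer of $\mathscr F^{E,n}_{h,t}$, it is enough to show $\mathscr F^{E,n}_{h,t}(E_n\cap T)\le\mathscr F^{E,n}_{h,t}(E_n)$, i.e.\ $J(E_n\cap T)-J(E_n)\le\int_{E_n\setminus T}(g_n-c)$. By submodularity $J(E_n\cap T)-J(E_n)\le J(T)-J(E_n\cup T)$, so it suffices to bound the latter. Testing the minimality of $T$ for $\mathscr F^E_{h,t}$ against the competitor $E_n\cup T$ and using the elementary identity $\int_{E\triangle(E_n\cup T)}|\tilde g|-\int_{E\triangle T}|\tilde g|=\int_{E_n\setminus T}\tilde g$ (split the integrals according to membership in $E$), one obtains $J(T)-J(E_n\cup T)\le\int_{E_n\setminus T}(\tilde g-c)\le\int_{E_n\setminus T}(g_n-c)$, the last step because $\tilde g\le g_n$. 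Hence $E_n\cap T$ is again a minimizer of $\mathscr F^{E,n}_{h,t}$, so $E_n\subseteq T$ by minimality of $E_n$. Consequently the $E_n$ are uniformly bounded and $E'\subseteq T$. Finally, repeating the semicontinuity/Fatou argument of the proof of Lemma~\ref{lemma existence} (with $|E_n|\to|E'|$ by dominated convergence, $\chi_{E_n\triangle E}|g_n|\to\chi_{E'\triangle E}|\tilde g|$, and $J(F)+\int_{E\triangle F}|g_n|-c|F|\le\mathscr F^E_{h,t}(F)$ for every $F$) shows that $E'$ minimizes $\mathscr F^E_{h,t}$; since $T$ is the minimal minimizer, $T\subseteq E'$, and therefore $E'=T$, i.e.\ $E_n\nearrow T^-_{h,t}E$.

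The second claim is obtained by the same scheme, using that for competitors $F\subseteq B_R$ (with $R$ as in the statement, $\t E\subseteq B_R$, $E\subseteq B_R$) the curvature term of $\mathscr F^E_{h,t}$ can be rewritten, up to an additive constant, as an integral over $B_R\setminus F$ of $g(\sd_E/h)$; truncating this integrand suitably yields the functional in the statement, and the roles of unions and minimal minimizers are now played by intersections and maximal minimizers. In this way one produces a non-increasing sequence $(E_n)$ of maximal minimizers of the stated functional, each satisfying $E_n\supseteq T^+_{h,t}E$ (by the submodularity comparison dual to the one above, testing $E_n$ against $E_n\cup T^+_{h,t}E$ and $T^+_{h,t}E$ against $E_n\cap T^+_{h,t}E$), with $\bigcap_n E_n$ a minimizer of $\mathscr F^E_{h,t}$; by maximality of $T^+_{h,t}E$ one concludes $\bigcap_n E_n=T^+_{h,t}E$, i.e.\ $E_n\searrow T^+_{h,t}E$ in $L^1$.

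Existence of the truncated minimizers and their monotonicity in $n$ are routine. The main difficulty is identifying the monotone limit with the \emph{extremal} minimizer of $\mathscr F^E_{h,t}$: the inclusion making the limit contain $T^-_{h,t}E$ (respectively be contained in $T^+_{h,t}E$) is immediate once the limit is known to be a minimizer, whereas the opposite inclusion $E_n\subseteq T^-_{h,t}E$ (respectively $E_n\supseteq T^+_{h,t}E$) is delicate — it rests on the submodularity comparison above, which must exploit simultaneously the minimality/maximality of both $E_n$ and $T^\pm_{h,t}E$ and the sign of $\tilde g-g_n$.
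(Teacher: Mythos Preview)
Your proof is correct and follows essentially the same route as the paper: take the minimal minimizers $E_n$ of the truncated problems, show they increase to a set contained in $T^-_{h,t}E$, and identify the limit as a minimizer of $\mathscr F^E_{h,t}$ via lower semicontinuity and Fatou. The one point of difference is the key inclusion $E_n\subseteq T^-_{h,t}E$: the paper obtains it in one line by invoking Lemma~\ref{lemma comparison function} (since $g\le g_n$, minimal minimizers are ordered), whereas you reprove that comparison from scratch using submodularity and the identity $\int_{E\triangle(E_n\cup T)}|\tilde g|-\int_{E\triangle T}|\tilde g|=\int_{E_n\setminus T}\tilde g$. Your argument is correct (and is essentially the proof of Lemma~\ref{lemma comparison function} specialized to this situation), but since that lemma is already available you could shorten the proof considerably by citing it.
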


\begin{proof}
    We prove the statement for $T^-_{h,t} E$, the other case being analogous. We set $c=f([t/h]h)$,  $g_n:=g(\sd_E/h)\vee (-n)$, and $E'=T_{h,t}^- E$.
    Arguing as in the proof of Lemma \ref{lemma existence}, one builds  a sequence of sets $(E_n)_{n\in\N}$, each being the minimal minimizer of $\mathscr F^{E,n}_{h,t}$, $E_n\subseteq B_R$ for all $n\in\N$ and  $E_n\nearrow \bigcup_{n\in\N} E_n =: \tilde E$. Note that $E'\supseteq E_n$ as $g\le g_n$,   therefore $\tilde E\subseteq E'$  and also $\chi_{E_n\triangle E'}=|\chi_{E_n}-\chi_{E'}|\to \chi_{\tilde E\triangle E'}$ a.e. as $n\to \infty.$  By lower semicontinuity of $J$ and Fatou's lemma we get
    \begin{align*}
        \mathscr F^E_{h,t}(\tilde E) &= \P(\tilde E)  -c|\tilde E| +\int_{\tilde E\triangle E'} |g(\sd_E/h)| = \P(\tilde E) -c|\tilde E|+\int_{\R^N} \liminf_{n\to\infty} \left(|g_n| \chi_{E_n\triangle E}\right)\\
        &\le  \liminf_{n\to\infty} \left(\P(E_n)-c|E_n|+\int_{E_n\triangle E}|g_n| \right).    
    \end{align*}
    Since $E_n$ minimizes $\mathscr F^{E,n}_{h,t}$  we get
    \begin{equation}\label{final eq}
        \mathscr F^E_{h,t}(\tilde E)\le  \liminf_n \left(\P(E')+\int_{E'\triangle E}|g_n|-c|E'|\right)\le \mathscr F^E_{h,t}(E'),
    \end{equation}
    where in the last inequality we used that $|g_n|\le |g|$. Since $E'$ is the minimal minimizer of $\mathscr F^{E}_{h,t}$ we conclude $\tilde E=E'$. The functional \eqref{funct coroll} is obtained from \eqref{def mathscr} adding $\int_{E}g_n(\sd_E/h)$. Finally, the functional in \eqref{funct coroll2} is obtained from functional \eqref{def mathscr} adding the (finite) term $-\int_{B_R\setminus E} g(\sd_E/h)\wedge n$  and restricting the family of competitors.
\end{proof}
We now provide an estimate on the evolution speed of balls. It is interesting to note that, in the isotropic setting ($\psi=\phi=|\cdot|$) and under the assumption of strict monotonicity of $\G$, an explicit evolution law for the radii of evolving balls can be obtained. In our more general case we need to employ the variational proofs of \cite{ChaMorPon15,ChaDegMor}.  By Lemma \ref{lemma existence}, the relevant case is $b=+\infty$.
\begin{lemma}\label{evolution bounded sets}
    Assume $b=+\infty.$ There exists a positive constant $C$ such that, for every $R>0$ and every $t\in (0,+\infty),h>0$ it holds
    \[ T_{h,t}^\pm B_R\subseteq B_{R+Ch}.\]
\end{lemma}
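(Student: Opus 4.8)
The plan is to bound the growth of $T_{h,t}^\pm B_R$ by comparison with the evolution of a slightly larger ball, using that balls of radius $\rho$ have curvature controlled from below by $-K$ (assumption (D)) and thus their evolution speed $\psi(\nu)\G(-\kappa+\mathtt{f})$ is controlled from above by $c_\psi\,\G(K+\|\mathtt f\|_\infty)=:c_\psi\G(K+\|\mathtt f\|_\infty)$, a finite constant since $b=+\infty$ guarantees $\G$ is real-valued. Concretely, I would fix $R>0$, $h>0$, $t\in(0,+\infty)$, set $c:=f([t/h]h)$ so that $|c|\le\|\mathtt f\|_{L^\infty}$, and aim to show $B_{R+Ch}$ (or rather a smooth competitor based on it) beats $T_{h,t}^+B_R$ in the energy $\mathscr F^{B_R}_{h,t}$ for a suitable universal $C$; by the definition of $T_{h,t}^+$ as the maximal minimizer this forces $T_{h,t}^+B_R\subseteq B_{R+Ch}$, and the inclusion for $T_{h,t}^-$ follows a fortiori since $T_{h,t}^-B_R\subseteq T_{h,t}^+B_R$.

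The key computation is the following. By Lemma \ref{lemma existence} we already know $T_{h,t}^+B_R\subseteq (B_R)_{bh}=\R^N$ (vacuous since $b=+\infty$) and $(B_R)_{-ah}\subseteq T_{h,t}^+B_R$, so the outer bound needs a genuine argument. Following the variational proof in \cite{ChaMorPon15,ChaDegMor}, I would test the minimality of $E':=T_{h,t}^+B_R$ against $E'\cap B_{R'}$ for $R'>R$ to be chosen, giving
\[
J(E')+\int_{B_R\triangle E'}\Bigl|g\bigl(\tfrac{\sd_{B_R}}{h}\bigr)\Bigr|-c|E'|\le J(E'\cap B_{R'})+\int_{B_R\triangle(E'\cap B_{R'})}\Bigl|g\bigl(\tfrac{\sd_{B_R}}{h}\bigr)\Bigr|-c|E'\cap B_{R'}|.
\]
Using submodularity $J(E')+J(B_{R'})\ge J(E'\cap B_{R'})+J(E'\cup B_{R'})$ (so $J(E'\cap B_{R'})-J(E')\le J(B_{R'})-J(E'\cup B_{R'})$, and a sharper isoperimetric-type bound: $J(E'\cup B_{R'})\ge J(B_{R'})$ is false in general, so instead one compares $J(E'\cup B_{R'})$ with $J(B_{R'})$ via the curvature of $B_{R'}$), the standard move is rather to use that for $x\in\partial B_{R'}$ with $E'\setminus B_{R'}$ sitting outside, the first variation of $J$ at $B_{R'}$ is $\ge \underline c(R')\ge -K$, which after integrating over the shell $E'\setminus B_{R'}$ yields
\[
J(E'\cup B_{R'})-J(B_{R'})\ \ge\ -K\,|E'\setminus B_{R'}|\quad\text{(schematically, via Def.~\ref{def kappa})},
\]
hence $J(E'\cap B_{R'})-J(E')\le K|E'\setminus B_{R'}|$. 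Meanwhile on $E'\setminus B_{R'}$ one has $\sd_{B_R}\ge R'-R$, so $g(\sd_{B_R}/h)\ge g((R'-R)/h)$, and since removing $E'\setminus B_{R'}$ from $E'$ changes the fidelity and volume terms by $-\int_{E'\setminus B_{R'}}|g(\sd_{B_R}/h)|+c|E'\setminus B_{R'}|$, collecting everything gives
\[
0\ \le\ \Bigl(K+c-g\bigl(\tfrac{R'-R}{h}\bigr)\Bigr)|E'\setminus B_{R'}|.
\]
Thus if $g((R'-R)/h)>K+\|\mathtt f\|_\infty$, i.e. $R'-R>h\,g^{-1}(K+\|\mathtt f\|_\infty)$ — equivalently $R'-R> h\,\G^{-1}(\ldots)$ read through the selection $g$ — we get $|E'\setminus B_{R'}|=0$. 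Choosing $C:=g(K+\|\mathtt f\|_{L^\infty}+1)$ (finite because $b=+\infty$ so $g$ is finite on all of $\R$) and $R'=R+Ch$ gives $T_{h,t}^+B_R\subseteq B_{R+Ch}$, and $C$ is independent of $R,h,t$ as required.

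The main obstacle is making rigorous the inequality $J(E'\cup B_{R'})-J(B_{R'})\ge -K|E'\setminus B_{R'}|$, i.e.\ transferring the pointwise lower curvature bound (D) for balls into an integral comparison for the nonlocal energy $J$ along the one-parameter family interpolating between $B_{R'}$ and $E'\cup B_{R'}$ — this is exactly the kind of estimate handled in \cite[Section 6]{ChaMorPon15} and I would either invoke it directly or reprove it by slicing: writing $E'\cup B_{R'}$ as an increasing union of smooth sets $B_{R'}\subseteq G_s\subseteq E'\cup B_{R'}$ and integrating the curvature bound $\kappa(\cdot,G_s)\ge\kappa(\cdot,B_{|x|})\ge\underline c(|x|)\ge -K$ (using monotonicity (A) for nested balls and (D)) against $\tfrac{d}{ds}|G_s|$. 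A secondary technical point is that $E'$ need not be $C^2$, so Definition \ref{def kappa} must be used in its variational (inf-liminf) form rather than via classical first variation; this is routine given the machinery already set up, and the coercivity of $|g|$ ensures all integrals in play are finite so no $\infty-\infty$ issues arise.
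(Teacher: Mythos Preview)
Your overall plan --- test the minimality of $E':=T_{h,t}^+B_R$ against $E'\cap B$ for a suitable ball and use submodularity --- matches the paper's, but the paper makes a refinement that bypasses your main obstacle entirely. Rather than a concentric ball $B_{R'}$ (which leaves $E'\setminus B_{R'}$ macroscopic and forces the global bound $J(E'\cup B_{R'})-J(B_{R'})\ge -K|E'\setminus B_{R'}|$), the paper \emph{localizes}: it sets $\bar\rho=\inf\{\rho:|E'\setminus B_\rho|=0\}$, picks $\bar x\in\partial B_{\bar\rho}$ of positive density for $E'$, and tests against $E'\cap B^\e$ where $B^\e$ is a ball of some radius $\rho>\bar\rho$, tangent from outside to $\partial B_{\bar\rho}$ at $\bar x$ and slightly shifted inward. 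Then $W^\e=E'\setminus B^\e$ has positive measure and $\overline{W^\e}\to\{\bar x\}$ in Hausdorff sense, so submodularity together with the \emph{pointwise} Definition~\ref{def kappa} give directly
\[
\frac{J(E')-J(E'\cap B^\e)}{|W^\e|}\ \ge\ \frac{J(B^\e\cup W^\e)-J(B^\e)}{|W^\e|}\ \ge\ \underline c(\rho)+o_\e(1)\ \ge\ -K+o_\e(1),
\]
with no integration needed. Combining this with the dissipation term (bounded below by $g(c_\psi(\bar\rho-R)/h)$ on $W^\e$) and applying $\G$ yields $\bar\rho\le R+Ch$.

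Your global estimate is a genuine gap. It is not in \cite[Section~6]{ChaMorPon15}: that section uses precisely the localized argument just described, not an integrated curvature bound. Your slicing proposal fails on two counts. First, the radial slices $(E'\cup B_{R'})\cap B_s$ are not $C^2$, so $\kappa(\cdot,G_s)$ is undefined; smoothing and passing to the limit goes the wrong way for the intermediate steps. Second, and more fundamentally, the monotonicity bound $\kappa(x,G_s)\ge\kappa(x,B_{|x|})$ via axiom~(A) requires $G_s\subseteq B_{|x|}$, which holds only when $x$ is a point of \emph{maximal} norm in $G_s$, not at a generic $x\in\partial G_s$. Finally, ``integrating the curvature'' along a family presupposes a first-variation formula $\tfrac{d}{ds}J(G_s)=\int_{\partial G_s}\kappa\,V_n$ that is not part of the axioms (i)--(vi), (A)--(D); the paper only asserts the curvature agrees with the first variation at $C^2$ sets \emph{when the latter exists}. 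The fix is exactly the paper's: shrink the excised region to a point so that Definition~\ref{def kappa} applies verbatim.
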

    
\begin{proof} 
It is sufficient to prove the claim for $T_{h,t}^+ B_R$. We fix $h>0$ and set $E=T_{h,t}^+ B_R$ and $\tilde g=g(\sd_{B_R}/h)$. We define
\[ 
\bar \rho =  {\inf}  \{ \rho\in (0,+\infty)  : |E\setminus B_\rho|=0 \}, 
 \]
and note that $\bar\rho<+\infty$ since $E$ is bounded. We can assume \textit{wlog} $\bar\rho>R$. Let $\bar x\in \bd B_{\bar \rho}$ such that $|E\cap B(\bar x,\e)|>0$ for any $\e>0$, and let $\rho>\bar \rho$. Let $\tau=(\tfrac{\rho}{\bar\rho}-1)\bar x$ and note that $B(-\tau,\rho)\supseteq B_{\bar\rho}$ and $\bd B(-\tau,\rho)$ is tangent to $\bd B_{\bar\rho}$ at $\bar x$.

We let for $\e>0$ small $B^\e= B(-(1+\e)\tau,\rho)$ and $W^\e=E\setminus B^\e$. We note that by construction $|W^\e|>0$ and it converges to $\bar x$ in the Hausdorff sense as $\e\to 0$.

Testing the minimality of $E$ against $E\cap B^\e$, we find
\begin{equation}\label{eq:eq0}
     J(E)-\P(B^\e \cap E) \le f([t/h]h)|W_\e| + \int_{B^\e \cap E\triangle B_R} |\tilde g| - \int_{E\triangle B_R} |\tilde g|.
\end{equation}
 We remark that, by the choice of $\bar \rho$ and $\tau$, taking $\e$ small it holds $B_R\subseteq B^\e \cap E$. Therefore, \eqref{eq:eq0} reads
 \[
      J(E)-\P(B^\e \cap E) \le f([t/h]h)|W_\e| + \int_{B^\e \cap E\setminus B_R} |\tilde g| - \int_{E\setminus B_R} |\tilde g|,
 \]
implying
 \begin{equation}\label{eq:eq1}
     J(E)-\P(B^\e \cap E) \le f([t/h]h)|W_\e| - \int_{ (E\setminus B^\e)\setminus B_R} |\tilde g|=f([t/h]h)|W_\e| - \int_{ W^\e} |\tilde g|.
\end{equation}
By submodularity \eqref{eq:submod}, using the definition of $\underline c$ and assumption (D) we conclude
\[
    -K+o_\e(1) \le \|f\|_\infty - \fint_{ W^\e} |\tilde g|\le \|f\|_\infty - \fint_{ W^\e} g(c_\psi(|x|-R)/h ).
\]
Passing to the limit $\e\to0$ we get
\[
K+\|f\|_\infty\ge \liminf_{s\to {c_\psi}(\bar \rho-R)h}g(s),
\]
from which the thesis follows applying $\G$ on both sides.
\end{proof}                              
Note that the previous result implies, in particular, that the discrete evolution starting from an initial bounded set remains bounded in every bounded time interval $(0,T)$. 

We then provide an upper bound  on the evolution speed of balls in the spirit of \cite{ChaMorPon15,ChaDegMor}. We remark that the relevant case is $a=+\infty$ as otherwise Lemma \ref{lemma existence} yields 
\[ \t B_R\supseteq B_{R-ah}. \]

\begin{lemma}	\label{lemma estimates on balls}
    Let $R_0>0$ and $\sigma>1$ be fixed. Assume $a=+\infty$. Then,     there exist a positive constant $c$ such that, if $h>0$ is small enough,  for  all $R\ge R_0$ and $t\in (0,+\infty)$ it holds 
    \begin{equation}\label{decay speed ball}
        \t B_R\supseteq B_{R+\frac h{c_\psi}\G(-\overline{c}(R/\sigma)-\|f\|_\infty )}.
    \end{equation}
\end{lemma}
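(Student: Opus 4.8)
The plan is to argue variationally, following \cite{ChaMorPon15,ChaDegMor}. Since $T_{h,t}^+B_R\supseteq T_{h,t}^-B_R$, it suffices to prove \eqref{decay speed ball} for $E:=T_{h,t}^-B_R$. Write $c:=f([t/h]h)$ (so $|c|\le\|f\|_\infty$), $\tilde g:=g(\sd_{B_R}/h)$, and set $\underline\rho:=\sup\{\rho\ge0:\ |B_\rho\setminus E|=0\}$; the assertion is that $\underline\rho\ge R+\tfrac h{c_\psi}\G(-\overline c(R/\sigma)-\|f\|_\infty)=:R'$, and I may assume $0<R'\le R$, the remaining cases being trivial or covered by Lemma \ref{lemma existence}. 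A preliminary reduction is that, for $h$ small enough depending only on $R_0$, $\sigma$ and the data, one has $\underline\rho\ge R/\sigma$; here the hypothesis $a=+\infty$ is essential. Indeed, testing the minimality of $E$ against $B_R$ itself and using the boundedness of $|E|$ (Lemma \ref{evolution bounded sets}, resp. Lemma \ref{lemma existence}) bounds $\int_{B_R\setminus E}|\tilde g|$ by a quantity that grows at most polynomially in $R$; on the other hand, if $\underline\rho<R/\sigma$ then $B_R\setminus E$ carries positive mass at Euclidean distance $\ge R_0(1-1/\sigma)$ from $\partial B_R$, where $|\tilde g|=-g(\sd_{B_R}/h)\ge -g(-\tfrac{R_0(1-1/\sigma)}{c_\psi h})$ since $\sd_{B_R}\le-\tfrac1{c_\psi}\dist_{B_R^c}$ and $g$ is non-decreasing; as $a=+\infty$ gives $|g(s)|\to+\infty$ when $s\to-\infty$, the two estimates are incompatible once $h$ is small (uniformly for $R\ge R_0$), provided one also knows $E$ is not too thin near $\partial B_{\underline\rho}$.

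Now assume $R/\sigma\le\underline\rho<R$ and pick $\bar x\in\partial B_{\underline\rho}$ with $|B(\bar x,\e)\setminus E|>0$ for every $\e>0$ (this exists by definition of $\underline\rho$ and compactness of the sphere). By Definition \ref{def kappa} there is $W^\e$ with $\overline{W^\e}\overset{\mathcal H}{\to}\{\bar x\}$, $|W^\e\setminus B_{\underline\rho}|>0$ and $\liminf_{\e\to0}\frac{J(B_{\underline\rho}\cup W^\e)-J(B_{\underline\rho})}{|W^\e\setminus B_{\underline\rho}|}\le\kappa(\bar x,B_{\underline\rho})$; modifying this recovery sequence, using that $E^c$ has positive density at $\bar x$, I may also take $W^\e\cap E=\emptyset$, hence $W^\e\cap B_{\underline\rho}=\emptyset$ and, for small $\e$, $W^\e\subseteq B_R$. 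Then $B_R\triangle(E\cup W^\e)=(B_R\triangle E)\setminus W^\e$, so testing the minimality of $E$ for $\mathscr F^{B_R}_{h,t}$ against $E\cup W^\e$ and simplifying yields $J(E\cup W^\e)-J(E)\ge\int_{W^\e}|\tilde g|+c|W^\e|$, while the submodularity \eqref{eq:submod} applied to $B_{\underline\rho}\cup W^\e$ and $E$ (with intersection $B_{\underline\rho}$ and union $E\cup W^\e$) gives $J(E\cup W^\e)-J(E)\le J(B_{\underline\rho}\cup W^\e)-J(B_{\underline\rho})$. Dividing by $|W^\e|=|W^\e\setminus B_{\underline\rho}|$, noting that $x\in W^\e$ has $|x|\ge\underline\rho$ and $\sd_{B_R}(x)\le-\tfrac1{c_\psi}(R-|x|)$ so that $|\tilde g(x)|\ge -g(-\tfrac{R-|x|}{c_\psi h})$, and letting $\e\to0$ along a subsequence realizing the $\liminf$, I arrive (via $\kappa(\bar x,B_{\underline\rho})\le\overline c(\underline\rho)$) at $\overline c(\underline\rho)\ge -g(-\tfrac{R-\underline\rho}{c_\psi h})-\|f\|_\infty$. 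Hence $g(-\tfrac{R-\underline\rho}{c_\psi h})\ge-\overline c(\underline\rho)-\|f\|_\infty$; applying the non-decreasing $\G$ and using $\G\circ g=\mathrm{id}$ on $(-a,b)=(-\infty,b)$ together with the continuity of $\G$ (which absorbs the one-sided limits of $g$) gives
\[
-\tfrac{R-\underline\rho}{c_\psi h}\ge\G\big(-\overline c(\underline\rho)-\|f\|_\infty\big),\qquad\text{i.e.}\qquad \underline\rho\ge R+\tfrac h{c_\psi}\G\big(-\overline c(\underline\rho)-\|f\|_\infty\big).
\]

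Finally, $\overline c$ is non-increasing --- a consequence of (A)--(B), since translating a copy of $B_{\underline\rho}$ to be internally tangent to $B_{R/\sigma}$ and invoking (A) gives $\overline c(\underline\rho)\le\overline c(R/\sigma)$ when $\underline\rho\ge R/\sigma$ --- whence $\G(-\overline c(\underline\rho)-\|f\|_\infty)\ge\G(-\overline c(R/\sigma)-\|f\|_\infty)$ and therefore $\underline\rho\ge R'$, which is \eqref{decay speed ball}. I expect the main obstacle to be exactly the preliminary reduction $\underline\rho\ge R/\sigma$: the self-consistent bound from the core step does not on its own exclude a very small $\underline\rho$ when $a=+\infty$ (then $\G(-\overline c(\underline\rho)-\|f\|_\infty)\to-\infty$ as $\underline\rho\to0$), so one genuinely needs the energy comparison above, together with some control on how thin a minimizer can be near a boundary point; the secondary technical points --- choosing the recovery sequence disjoint from $E$ and handling the one-sided continuity of $g$ --- are routine, and the anisotropy constants $c_\psi^{\pm1}$ are tracked throughout via \eqref{bound velocity}.
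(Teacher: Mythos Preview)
Your argument has two genuine gaps, both of which the paper resolves by quite different devices.

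\textbf{The preliminary lower bound.} You need $\underline\rho\ge R/\sigma$ before you can run the variational step, and you admit this is the obstacle. Your sketch (test against $B_R$ and use largeness of $|\tilde g|$ far inside) does not close: knowing only that $|B(\bar x,\e)\setminus E|>0$ for all $\e$ gives no quantitative lower bound on $|B_R\setminus E|$ in the region $\{|x|\le R/\sigma\}$, so the ``incompatibility'' never materializes. The paper bypasses this entirely via translation invariance and comparison: for $h$ small (depending only on $R_0$) one has $T_{h,t}B_{R_0/4}\neq\emptyset$, and then $T_{h,t}B_{R/4}+B_{3R/4}\subseteq T_{h,t}B_R$ by translating $B_{R/4}$ inside $B_R$ and using Lemma~\ref{comparison principle}. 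This gives $B_{R/4}\subseteq E$ and hence $\underline\rho\ge R/4$, uniformly in $R\ge R_0$; the final bound with $R/\sigma$ follows by bootstrapping the variational inequality once.

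\textbf{The choice of $W^\e$.} You invoke the abstract recovery sequence from Definition~\ref{def kappa} realising $\kappa^+(\bar x,B_{\underline\rho})$ and then ``modify'' it to lie in $E^c$. This step is not justified: intersecting $W^\e$ with $E^c$ can destroy both $|W^\e|>0$ (nothing rules out $E^c$ having density zero at $\bar x$) and the upper bound on $\frac{J(B_{\underline\rho}\cup W^\e)-J(B_{\underline\rho})}{|W^\e|}$. Since the definition of $\kappa^+$ is an \emph{infimum}, an arbitrary sequence $W^\e$ only gives $\liminf\ge\kappa^+$, which is the wrong direction for your chain of inequalities. The paper avoids this by working not with $B_{\underline\rho}$ but with an explicit smooth ball $B^\e=B((1+\e)\tau,\rho)$ of radius $\rho<\underline\rho$, internally tangent to $B_{\underline\rho}$ at $\bar x$ after a small shift, and setting $W^\e:=B^\e\setminus E$. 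Then $W^\e\cap E=\emptyset$ and $|W^\e|>0$ hold by construction, submodularity is applied to the pair $(E,B^\e)$ rather than $(E,B_{\underline\rho})$, and the curvature that appears is $\kappa(\cdot,B^\e)$, directly controlled by $\overline c(\rho)$.
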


\begin{proof}
    We prove the result for $E:=T_{h,t}^-B_R$. Take $h$ small enough so that $T_{h,t} B_{\frac 14 R_0}\neq \emptyset$. By translation invariance and taking $h$ small, one can see that\footnote{Indeed, by translation invariance it holds
    \[ T_{h,t} B_{\frac R4}+ B_{\frac 34 R}\subseteq T_{h,t} B_R, \]
    and for $h$ small (depending on $R$) the set $\t B_{R/4}$ is not empty.} $B_{\frac R4}\subseteq E$. We set
    \begin{equation}\label{def bar rho}
        \bar \rho=\sup\{ \rho\in [0,+\infty) : | B_\rho\setminus E|=0 \}\in [\tfrac R4,+\infty),
    \end{equation}
    and note that $\bar \rho<+\infty$ by the boundedness of $E$.
    Assume \textit{wlog} $\bar \rho<R$. Let $\bar x\in\bd B_{\bar \rho}$ be such that $|B(\bar x,\varepsilon)\setminus E|>0$ for any $\varepsilon>0$. Set $\rho\in(0,\bar \rho)$ and $\tau=(1-\rho/\bar\rho)\bar x$ such that $\bd B(\tau,\rho)\cap \bd B_{\bar \rho}=\{\bar x\}$. Setting $B^\e:=((1+\e)\tau,\rho)$, consider the sets 
    \begin{equation*}
        W^\e:=B^\e\setminus E.
    \end{equation*}
    Notice that by construction, for $\e$ small, $W^\e$ has positive measure and it converges to $\{x\}$ as $\e\to 0$ in the Hausdorff sense. Since $E$ minimizes \eqref{pb equi inf} (as $a=+\infty$), we use its minimality to get 
    \[
         \P(\t B_R)-\P(B^\e \cup \t B_R) \le f([t/h]h)|W_\e| + \int_{W^\e} g\left( \frac{\sd_{B_R}}h \right).
    \]
    Dividing by $|W_\e|>0$ the equation above reads
\begin{equation}\label{eq0}
     \frac{\P(\t B_R)-\P(B^\e \cup \t B_R)}{|W_\e|} \le f([t/h]h) + \fint_{W^\e} g\left( \frac{\sd_{B_R}}h \right).
\end{equation}
By submodularity and the definition of variational curvature \eqref{eq:def_kappa} we see that 
\[  
\P(\t B_R)-\P(B^\e \cup \t B_R) \ge \P(B^\e\setminus W_\e)-\P(B^\e )  \ge  |W_\e| \left( -\kappa(\bar x,B^\e)+o_\e(1)  \right),
\]
where $o_\e(1)\to 0 $ as $\e\to0$. 
    % In order to estimate the \textit{lhs} of the equation above, we reason as follows. 
    % By  \eqref{def variaz} with $v=\nabla\phi(x/|x|)$ and by submodularity, we obtain 
    % \begin{equation}\label{split}
    %     \begin{split}
    %         &\int_{\R^N}\nabla\phi\left(\frac x{|x|}\right)\cdot  D\chi_{W^\varepsilon}=\int_{\R^N}\nabla\phi\left(\frac x{|x|}\right) \cdot \left(  D\chi_{B^\e}- D\chi_{B^\e\cap \t B_R} \right)\\
    %         &\le \P(B^\e\cap \t B_R)-\P(B^\e)\le \P(\t B_R)-\P(B^\e \cup \t B_R).
    %     \end{split}
    % \end{equation}
    % Then, using the divergence theorem on the \textit{lhs} of \eqref{split} and combining it with \eqref{eq0} we arrive at
    % \begin{equation}\label{eq 1}
    %     -\fint_{W^\e}\div\,\nabla\phi\left(\frac x{|x|}\right)\le f([t/h]h) + \fint_{W^\e} g\left( \frac{\sd_{B_R}}h \right).
    % \end{equation}
    % By the regularity assumptions on $\phi$ we remark that it holds 
    % \[ |\div \nabla\phi(p)|=|\text{tr}(\nabla^2\phi(p))|\le \frac c{|p|}. \]
    We plug the estimate above in \eqref{eq0} and send $\e\to 0$  to conclude
    \begin{equation*}
        -\overline{c}(\bar \rho)- \|f\|_\infty \le \limsup_{s\to c_\psi(\bar \rho-R)/h } g(s).
    \end{equation*}
    Applying $\G$ to both sides of \eqref{eq0}, we conclude 
    \begin{equation}\label{incremental eq}
        \bar\rho\ge R+\frac h{c_\psi} \G\left(-\overline{c}(\bar\rho)-\|f\|_\infty\right)\ge R+\frac h{c_\psi}\G\left( -\overline{c}(R/4) -\|f\|_\infty \right),
    \end{equation}
    where in the last inequality we recalled that $\bar \rho\ge R/4$. Using again the previous analysis with the bound \eqref{incremental eq}, we show \eqref{decay speed ball} by taking $h$ small enough.
\end{proof}

\subsection{The scheme for unbounded sets}
We now  define the discrete evolution scheme for unbounded sets having compact boundary. Let us introduce the generalized perimeter 
\[
\tilde J(E):=J(E^c)\quad\text{ for all }E\in\M. 
\]
Is is easily checked that $\tilde J$ satisfies all the assumptions of Definition \ref{def J}, and, denoting $\tilde \kappa$ the corresponding curvature, that 
\[   
\tilde \kappa(x,E) = -\kappa(x, E^c).
\]
Therefore, one has the bounds
\begin{equation*}
    \begin{split}
        &\overline c(\rho) = \max_{x\in\bd B_\rho} \max\left\lbrace \tilde \kappa(x,B_\rho),  -\tilde\kappa(x, B_\rho^c)\right\rbrace,\\
        &\underline c(\rho) = \min_{x\in\bd B_\rho} \min\left\lbrace \tilde\kappa(x,B_\rho),  -\tilde\kappa(x,B_\rho^c)\right\rbrace,
    \end{split}
\end{equation*}
where the functions $\overline{c}, \underline{c}$ are defined in \eqref{def c}. 
For every compact set $K$ and $h,t>0$, we let $\tilde T_{h,t}^\pm K$ denote the maximal and the minimal minimizer of $\tilde {\mathscr F}^K_{h,t}$, which corresponds to \eqref{def mathscr} with $\tilde g(s):=-g(-s)$ instead of $g(s)$ and $-f$ instead of $f$.  By changing variable $\tilde F:=F^c$ in \eqref{def mathscr}, we  see that $ (\tilde {T}_{h,t}^- K)^c$ is the maximal solution to
\begin{equation}\label{equiv pb unbounded}
\begin{split}
    \min\left\lbrace \P(\tilde F) +\int_{\tilde F \triangle K^c} \left\lvert  g\left( \sd_{K^c}/h \right)\right\rvert +f([t/h]h)|\tilde F^c| \right\rbrace  .
\end{split}
\end{equation} 
Therefore, for every unbounded set $E$ with compact boundary we define\footnote{To justify this, one can check that if a set $E$ is moving according to \eqref{law evol}, its complement moves according to
\[ V(x,t)=-\psi(\nu_{E^c}(x))\G(\kappa(x,E^c)+\mathtt f)\quad \text{in the direction }\nu_{E^c}, \] from which  the incremental problem follows.}
\begin{equation}\label{def unbounded}
    \t E:= \left(  \tilde T_{h,t}^\mp E^c \right)^c.
\end{equation}
As in the case of compact sets, we set $T_{h,t} E:=T_{h,t}^- E.$  
% Given an unbounded set $E_0$ having compact boundary, we  define the discrete flow $\{E_t\h\}_{t\ge 0}$ as follows: $E\h_t:=E_0$ for $t\in [0,h)$ and
% \[ E\h_t=T_{h,t-h} E\h_{t-h},\quad \forall t\in[h,+\infty). \]
Since $\tilde g$ has the same properties of $g$, one easily checks that analogous results to Lemmas \ref{evolution bounded sets}, \ref{comparison principle} and \ref{lemma estimates on balls} hold also for \eqref{def unbounded}. 

\begin{lemma}\label{comparison principle, unbounded}
Let $t,h>0$. The following statements hold.
\begin{itemize}
    \item Let $F_1\subseteq F_2$ be unbounded sets with compact boundary. Then, $\t F_1\subseteq \t F_2$.
    \item There exists $C>0$ such that for every $R>0,h>0$ it holds $\t B_R^c\supseteq B_{R+Ch}^c.$
    \item Let $R_0>0$ and $\sigma>1$ be fixed. Then, if $a=+\infty$ there exist $c>0$  such that for $h>0$ small enough  and for  all $R\ge R_0$, it holds 
\begin{equation*}
    \t B_R^c\subseteq  B_{R+\frac h{c_\psi}\G(-\sigma\frac cR-\|f\|_\infty)}^c.
\end{equation*}
If instead $a<+\infty$ it holds 
\begin{equation*}
    \t B_R^c\subseteq  B_{R-ah}^c.
\end{equation*}
\end{itemize}
\end{lemma}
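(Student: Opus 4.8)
The three assertions are all obtained by conjugating, through complementation $E\mapsto E^{c}$, the corresponding facts already established for the bounded-set scheme, applied now to the operator $\tilde T_{h,t}^{\pm}$. The key remark is that $\tilde T_{h,t}^{\pm}$ is an operator of exactly the same type as $T_{h,t}^{\pm}$: by construction it selects the minimal/maximal minimizer (with respect to inclusion) of a functional of the form \eqref{def mathscr} built from the generalized perimeter $\tilde J$ (which satisfies Definition \ref{def J}), from the strictly increasing selection $\tilde g(s)=-g(-s)$, and from the forcing $-\mathtt f\in C^{0}_{b}$, with $\|-\mathtt f\|_{\infty}=\|\mathtt f\|_{\infty}$. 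Here $\tilde g$ is an inverse selection of $s\mapsto -\G(-s)$, a continuous non-decreasing function vanishing at $0$; passing from $\G$ to $-\G(-\cdot)$ simply interchanges the two saturation levels (the roles of $a$ and $b$, and with them those of the minimal and maximal selections). Moreover $\tilde\kappa(x,F)=-\kappa(x,F^{c})$ still satisfies (A)--(D), with the \emph{same} functions $\overline c,\underline c$, as recorded just before the statement. Consequently Lemmas \ref{lemma existence}, \ref{comparison principle}, \ref{lemma comparison function}, \ref{evolution bounded sets} and \ref{lemma estimates on balls} hold verbatim with $T_{h,t}^{\pm}$ replaced by $\tilde T_{h,t}^{\pm}$, and the proof reduces to transcribing them and taking complements.

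For the first item: if $F_{1}\subseteq F_{2}$ are unbounded with compact boundary, then $F_{2}^{c}\subseteq F_{1}^{c}$ are bounded, so the (non-strict) inclusion part of Lemma \ref{comparison principle}, applied to $\tilde{\mathscr F}$, gives $\tilde T_{h,t}^{-}F_{2}^{c}\subseteq\tilde T_{h,t}^{-}F_{1}^{c}$ and $\tilde T_{h,t}^{+}F_{2}^{c}\subseteq\tilde T_{h,t}^{+}F_{1}^{c}$; taking complements and recalling the definition \eqref{def unbounded} yields $\t F_{1}\subseteq\t F_{2}$ in both branches of $\pm$.

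For the last two items, complementing \eqref{def unbounded} turns each inclusion into an estimate on $\tilde T_{h,t}^{\pm}B_{R}$: the bound $\t B_{R}^{c}\supseteq B_{R+Ch}^{c}$ is equivalent to $\tilde T_{h,t}^{\pm}B_{R}\subseteq B_{R+Ch}$, while $\t B_{R}^{c}\subseteq B_{\rho(R,h)}^{c}$ is equivalent to $\tilde T_{h,t}^{\pm}B_{R}\supseteq B_{\rho(R,h)}$. In the regime where the pertinent saturation level of $\G$ is infinite, the former is Lemma \ref{evolution bounded sets} and the latter Lemma \ref{lemma estimates on balls}, both read for the $\tilde{}$-scheme; in the complementary regime, both follow from the two-sided inclusion $(B_{R})_{-\tilde a h}\subseteq\tilde T_{h,t}^{\pm}B_{R}\subseteq(B_{R})_{\tilde b h}$ of Lemma \ref{lemma existence} together with the elementary comparisons $B_{R-c_{\psi}\tilde a h}\subseteq(B_{R})_{-\tilde a h}$ and $(B_{R})_{\tilde b h}\subseteq B_{R+c_{\psi}\tilde b h}$, which are immediate from \eqref{bound velocity}. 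Undoing the substitution $\G\mapsto -\G(-\cdot)$, $g\mapsto -g(-\cdot)$ and using that $\overline c$ is unchanged produces the displayed expressions (for the third item one also reuses, inside $\overline c$, the short bootstrap replacing $\bar\rho$ by $R/\sigma$ exactly as in the proof of Lemma \ref{lemma estimates on balls}).

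The main point requiring attention is precisely this bookkeeping: after complementation one must keep careful track of which saturation regime of $\G$ (the $a$-side or the $b$-side) and which of the two extremal minimizers is relevant, since the sign in $\tilde g(s)=-g(-s)$ makes it easy to confuse them. There is, however, no new analytic content: the variational arguments have already been carried out for $T_{h,t}^{\pm}$, and the passage to $\tilde J$, $\tilde g$, $-\mathtt f$ and then to complements is purely formal. The fact that $\tilde J$ is a generalized perimeter and that $\tilde\kappa$ inherits (C)--(D) has been checked in the text preceding the statement.
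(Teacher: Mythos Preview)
Your proposal is correct and follows exactly the paper's approach: the paper's proof is the single sentence ``Since $\tilde g$ has the same properties of $g$, one easily checks that analogous results to Lemmas \ref{evolution bounded sets}, \ref{comparison principle} and \ref{lemma estimates on balls} hold also for \eqref{def unbounded}.'' You spell out in detail what the paper leaves implicit---the swap of minimal/maximal selections and of the saturation levels $a\leftrightarrow b$ under $\G\mapsto -\G(-\cdot)$, and which of Lemmas \ref{lemma existence}, \ref{evolution bounded sets}, \ref{lemma estimates on balls} is invoked in each regime---but the argument is the same.
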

Furthermore, Corollary \ref{corollary approx} implies straightforwardly  the following approximation result.
\begin{cor}\label{corollary approx unbounded}
     Set $t,h>0$ and let $E\in\M$ be an unbounded set   with bounded complement.  Then, there exists two sequences of  sets $(E_n)_{n\in\N}, (E_n')_{n\in\N}$ with uniformly bounded complement with the following property. Each $(E_n)^c$ is a minimizer of \eqref{equiv pb unbounded} with $ g\vee (-n)$ substituting $ g$, and $(E_n')^c$ is a minimizer of  \eqref{equiv pb unbounded} with $ g\wedge n$ substituting $ g$. Moreover $E_n\nearrow T_{h,t}^- E$ and $E_n'\searrow T_{h,t}^+ E$.
\end{cor}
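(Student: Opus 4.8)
The plan is to deduce this statement from Corollary \ref{corollary approx} by complementation, exactly in the way the incremental scheme \eqref{def unbounded} for unbounded sets was obtained from the one for bounded sets. Write $K := E^c$, which is a bounded set, and recall from \eqref{def unbounded} that
\[
T_{h,t}^-E = \bigl(\tilde T_{h,t}^+K\bigr)^c, \qquad T_{h,t}^+E = \bigl(\tilde T_{h,t}^-K\bigr)^c,
\]
where $\tilde T_{h,t}^\pm K$ are the maximal/minimal minimizers of $\tilde{\mathscr F}^K_{h,t}$, that is of the functional \eqref{def mathscr} in which $g$ is replaced by $\tilde g(s) = -g(-s)$ and $\mathtt f$ by $-\mathtt f$. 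Since $s\mapsto -\G(-s)$ is again continuous, non-decreasing and vanishing at $0$, $\tilde g$ is an admissible strictly increasing selection of its inverse with $\tilde g(0)=0$, and $-\mathtt f\in C^0_b(\R)$; hence Corollary \ref{corollary approx} (together with all the results it relies on, which transfer to the dualized data exactly as already observed just before the present statement) applies verbatim to the \emph{bounded} set $K$ with these dualized data.

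First I would apply Corollary \ref{corollary approx} to $K$: its first part yields uniformly bounded sets $F_n\nearrow \tilde T^-_{h,t}K$, each $F_n$ a minimizer of the functional \eqref{funct coroll} built from the dualized data (so with $\tilde g(\sd_K/h)\vee(-n)$ in the integrand and $-(-\mathtt f)=\mathtt f$ in the volume term), while its second part yields uniformly bounded sets $F_n'\searrow \tilde T^+_{h,t}K$, each $F_n'$ a minimizer of \eqref{funct coroll2} built from the dualized data (so with $\tilde g(\sd_K/h)\wedge n$, among subsets of a fixed ball $B_R\supseteq \tilde T_{h,t}^\pm K$). I would then set
\[
E_n := \bigl(F_n'\bigr)^c, \qquad E_n' := \bigl(F_n\bigr)^c .
\]
Both are sets with uniformly bounded complement, and complementing the two monotone convergences above gives precisely $E_n\nearrow (\tilde T^+_{h,t}K)^c = T^-_{h,t}E$ and $E_n'\searrow (\tilde T^-_{h,t}K)^c = T^+_{h,t}E$.

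It then remains only to recognize the functionals minimized by $(E_n)^c = F_n'$ and $(E_n')^c = F_n$. Performing on the dualized \eqref{funct coroll}, resp. \eqref{funct coroll2}, the change of variable $\tilde F = F^c$ — exactly as in the derivation of \eqref{equiv pb unbounded} — and using $\sd_{K} = -\sd_{E}$ together with the elementary identities $|\tilde g(-s)| = |g(s)|$ and
\[
(-g(s))\wedge n = -\bigl(g(s)\vee(-n)\bigr), \qquad (-g(s))\vee(-n) = -\bigl(g(s)\wedge n\bigr),
\]
one checks that, up to a finite additive constant (an integral of the truncated nonlinearity over the bounded set $E^c$) and the harmless restriction to competitors with complement in $B_R$, the dualized \eqref{funct coroll2} becomes \eqref{equiv pb unbounded} with $g$ replaced by $g\vee(-n)$, and the dualized \eqref{funct coroll} becomes \eqref{equiv pb unbounded} with $g$ replaced by $g\wedge n$. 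This is exactly the claimed property of $(E_n)^c$ and $(E_n')^c$.

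The only delicate point — and the main, essentially clerical, obstacle — is the bookkeeping: one must keep track of which extremal minimizer ($\tilde T^+$ versus $\tilde T^-$) and which one-sided truncation ($\vee$ versus $\wedge$) corresponds, under the double sign change $F\mapsto F^c$ and $s\mapsto -s$, to which object in the statement, and verify that the additive terms dropped in the change of variable — integrals of the truncated nonlinearity over $E^c$, which is bounded — are indeed finite and therefore do not alter the set of minimizers. No new compactness, regularity or energy estimate is required beyond those already contained in Corollary \ref{corollary approx}.
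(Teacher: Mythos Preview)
Your proposal is correct and follows precisely the approach the paper indicates: the paper's entire proof is the single sentence ``Corollary~\ref{corollary approx} implies straightforwardly the following approximation result,'' and you have carried out the complementation-and-dualization bookkeeping that this sentence leaves to the reader. The assignment $E_n=(F_n')^c$, $E_n'=(F_n)^c$ and the use of $\tilde g(s)=-g(-s)$, $\sd_K=-\sd_E$ to convert the dualized \eqref{funct coroll}--\eqref{funct coroll2} into \eqref{equiv pb unbounded} with the appropriate truncations is exactly right, and your remark that the discrepancy is a finite additive constant (an integral of a truncated quantity over the bounded set $E^c$) is the correct justification.
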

We now deduce an equivalent version of \eqref{equiv pb unbounded}, which will be used in the final proof. Let us consider $E$ such that $E^c\subseteq B_R$ and assume $a=+\infty$. Recall that $\t E\supseteq B_{R+Ch}^c$ for some $C>0$ by Lemma \ref{comparison principle, unbounded}.   Adding to the  functional in \eqref{equiv pb unbounded} the term $\int_{B_{R+Ch}\setminus  (T_{h,t}E)^c} g(\sd_{E}/h)$  and restricting the family of competitors, we note that $T_{h,t}^-E$ is the minimal solution to 
\begin{equation}\label{pb equi unbounded}
     \min\left\lbrace \P(\tilde F) +\int_{\tilde F\cap B_{R+ch}}  g\left( \sd_{E}/h \right) + f([t/h]h)|\tilde F^c| : \tilde F^c\subseteq B_{R+ch}\right\rbrace.
\end{equation}
The case $a<+\infty$ needs to be treated by approximation using Corollary \ref{corollary approx unbounded}. Lastly, we state  a comparison principle between bounded and unbounded sets. Its proof follows the one of \cite[Lemma~6.10]{ChaMorPon15}, up to employing  Corollary \ref{corollary approx unbounded}.
\begin{lemma}\label{comparison principle, bounded-unbounded}
    Let $E_1$ be a compact set and let $E_2$ be an open, unbounded set with compact boundary, and such that $E_1\subseteq E_2$. Then, for every $h,t> 0$ it holds $\t E_1\subseteq \t E_2.$
\end{lemma}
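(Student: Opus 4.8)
The strategy is to mimic the intersection/union testing argument from the proof of the weak comparison principle, Lemma~\ref{comparison principle} (see also \cite[Lemma~6.10]{ChaMorPon15}), the only genuinely new point being that $g$ need not be bounded from below, so that the equivalent formulations \eqref{pb equi inf}, \eqref{pb equi unbounded} are available only when $a=+\infty$; in general one has to run the argument on the truncated approximations supplied by Corollaries~\ref{corollary approx} and~\ref{corollary approx unbounded}.

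First I would record two elementary facts. Since $E_1$ is compact, $E_2$ is open and $E_1\subseteq E_2$, we have $\dist(E_1,\partial E_2)>0$, so in fact $E_1\subset\joinrel\subset E_2$; moreover from $E_1\subseteq E_2$ (hence $\dist^\psi_{E_1}\ge\dist^\psi_{E_2}$ and $\dist^\psi_{E_1^c}\le\dist^\psi_{E_2^c}$) one gets $\sd_{E_1}\ge\sd_{E_2}$ everywhere, and the compact containment upgrades this to the pointwise strict inequality $\sd_{E_1}>\sd_{E_2}$ on $\R^N$. Since $g$ is strictly increasing this gives $g(\sd_{E_1}/h)>g(\sd_{E_2}/h)$ pointwise. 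Recall also that $\t E_1$ is bounded (Lemma~\ref{lemma existence}) and $\t E_2$ has bounded complement.

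Next I would treat the model case $a=+\infty$, arguing by contradiction that $|E_1'\setminus E_2'|>0$ with $E_1'=T_{h,t}^-E_1$, $E_2'=T_{h,t}^-E_2$. In this case $E_1'$ minimizes \eqref{pb equi inf} with $E=E_1$, while $E_2'$ is the minimal solution to \eqref{pb equi unbounded}; fixing $R$ with $(E_2')^c\subseteq B_R$, I would test the minimality of $E_1'$ against $E_1'\cap E_2'$ and that of $E_2'$ against $E_1'\cup E_2'$, which is an admissible competitor since $E_1'\setminus E_2'\subseteq(E_2')^c\subseteq B_R$ forces $(E_1'\cup E_2')^c\subseteq B_R$. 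The forcing and volume contributions cancel identically on $E_1'\setminus E_2'$, and summing the two inequalities, discarding the nonnegative term $J(E_1')+J(E_2')-J(E_1'\cap E_2')-J(E_1'\cup E_2')\ge0$ coming from submodularity \eqref{eq:submod}, leaves
\[
0\le\int_{E_1'\setminus E_2'}\Big(g\big(\tfrac{\sd_{E_2}}{h}\big)-g\big(\tfrac{\sd_{E_1}}{h}\big)\Big),
\]
whose integrand is strictly negative by the first step: a contradiction. The case $\t=T^+_{h,t}$ is identical, using maximal minimizers.

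Finally, for the general case ($a<+\infty$, or $b<+\infty$) I would replace $E_1',E_2'$ by the monotone approximating sequences $E_{1,n}\nearrow T_{h,t}^-E_1$ and $E_{2,n}\nearrow T_{h,t}^-E_2$ of Corollaries~\ref{corollary approx} and~\ref{corollary approx unbounded}, which minimize the analogous functionals with $g$ replaced by the truncation $g\vee(-n)$; repeating the computation at level $n$ yields $E_{1,n}\setminus E_{2,n}\subseteq\{g(\sd_{E_1}/h)\le-n\}$ up to negligible sets. Here I expect the main obstacle: one must combine this with the monotonicity of the sequences, so that for each fixed $k$ the inclusion $E_{1,k}\setminus E_2'\subseteq E_{1,m}\setminus E_{2,m}$ for all $m\ge k$ gives $E_{1,k}\setminus E_2'\subseteq\bigcap_{m\ge k}\{g(\sd_{E_1}/h)\le-m\}=\{\sd_{E_1}\le-ah\}=(E_1)_{-ah}$ up to null sets; hence $E_1'\setminus E_2'\subseteq(E_1)_{-ah}$, which however is contained in $(E_2)_{-ah}\subseteq T_{h,t}^-E_2=E_2'$ by Lemma~\ref{lemma existence} (and its counterpart for the unbounded scheme), so $E_1'\setminus E_2'$ is negligible. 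The truncation $g\wedge n$, with the corresponding decreasing sequences from \eqref{funct coroll2}, handles $b<+\infty$ in the same way, and the $T^+$ statement again follows verbatim.
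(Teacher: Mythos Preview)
Your proposal is correct and follows essentially the same route as the paper, which simply refers to \cite[Lemma~6.10]{ChaMorPon15} for the intersection/union comparison argument and to Corollary~\ref{corollary approx unbounded} for the truncation step needed when $a<+\infty$. Your write-up is in fact considerably more detailed than the paper's one-line proof; in particular, the monotone-inclusion argument you use to pass from $E_{1,n}\setminus E_{2,n}\subseteq\{g(\sd_{E_1}/h)\le -n\}$ to $E_1'\setminus E_2'\subseteq (E_1)_{-ah}\subseteq E_2'$ is exactly the kind of care the paper leaves implicit.
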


\section{Main result}\label{sect:main_res}
We start by introducing the discrete approximation scheme. Given a  continuous function $u_0:\R^N\to \R$ which is constant outside a compact set, we define the transformation 
\begin{equation}\label{def operator}
    T_{h,t} u(x)=\sup\left\lbrace s\in\R\ :\ x\in T_{h,t} \{ u_0\ge s \} \right\rbrace,
\end{equation}
{where the operators $T_{h,t}^\pm$ have been introduced in \eqref{incremental pb},} and  we recall that $T_{h,t}:=T_{h,t}^-$. We then 
 set $u_h(x,t)=u_0(x)$ for $t\in [0,h)$ and  {define}
\begin{equation}\label{def uh}
    u_h(x,t):=\left( T_{h,t-h} u_h(\cdot,t-h)\right)(x).
\end{equation}
By lemmas \ref{comparison principle} and \ref{comparison principle, unbounded}, one can see that the operator $T_{h,t} $ maps functions into functions.  
% Therefore, we can define the following discrete approximation scheme.
% Given an initial uniformly continuous function $u_0$, which is bounded outside a compact set, the discrete approximation scheme is defined as:  $u_h(x,t)=u_0(x)$ for $t\in [0,h)$ and 
% \begin{equation}\label{def uh}
% u_h(x,t):=\left( T_{h,t-h} u_h(\cdot,t-h)\right)(x)\quad \text{ for }t\ge h.
% \end{equation}
The following properties of the operator $T_{h,t}$ hold.
\begin{lemma}
    Given $t,h>0,$ the operator $T_{h,t}$ defined in \eqref{def operator} satisfies the following properties: 
    \begin{itemize}
        \item $T_{h,t}$ is monotone, meaning that $u_0\le v_0$ implies $T_{h,t} u_0 \le T_{h,t} v_0$;
        \item $T_{h,t}$ is translation invariant, as for any $z\in \R^N,$ setting $\tau_z u_0(x):=u_0(x-z)$, it holds $T_{h,t}(\tau_z u_0)=\tau_z (T_{h,t} u_0);$
        \item $T_{h,t}$ commutes with constants, meaning $T_{h,t} (u+c)=(T_{h,t} u)+c$ for every $c\in\R.$
    \end{itemize}
\end{lemma}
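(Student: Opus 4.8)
The plan is to verify each of the three properties directly from the definition \eqref{def operator} of $T_{h,t}$ as a sup over superlevel sets, reducing everything to the corresponding structural property of the set operator $T_{h,t}=T^-_{h,t}$ established in the previous section.

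\textbf{Monotonicity.} First I would observe that if $u_0\le v_0$, then for every $s\in\R$ we have the inclusion of superlevel sets $\{u_0\ge s\}\subseteq \{v_0\ge s\}$. Applying the weak comparison principle for the set operators --- Lemma \ref{comparison principle} in the bounded case, Lemma \ref{comparison principle, unbounded} in the unbounded case, and Lemma \ref{comparison principle, bounded-unbounded} to bridge the two --- and recalling that $T_{h,t}$ is defined as the \emph{minimal} minimizer (so that the relevant monotonicity statement about minimal minimizers applies even when the inclusion of the $F_i$ is not strict), we get $T_{h,t}\{u_0\ge s\}\subseteq T_{h,t}\{v_0\ge s\}$ for every $s$. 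Hence $\{s : x\in T_{h,t}\{u_0\ge s\}\}\subseteq \{s : x\in T_{h,t}\{v_0\ge s\}\}$, and taking suprema gives $T_{h,t}u_0(x)\le T_{h,t}v_0(x)$ for every $x$.

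\textbf{Translation invariance.} Here the key is that the superlevel sets of $\tau_z u_0$ are exactly the translates of those of $u_0$: $\{\tau_z u_0\ge s\}=z+\{u_0\ge s\}$. Since the functional $\mathscr F^E_{h,t}$ is built from the translation-invariant perimeter $J$ (Definition \ref{def J}(vi)), the translation-invariant signed distance $\sd_E$, and a volume term, one checks that $\mathscr F^{z+E}_{h,t}(z+F)=\mathscr F^{E}_{h,t}(F)$, so $T_{h,t}(z+E)=z+T_{h,t}E$ (minimal minimizers are preserved under translation). Therefore $x\in T_{h,t}\{\tau_z u_0\ge s\}$ iff $x\in z+T_{h,t}\{u_0\ge s\}$ iff $x-z\in T_{h,t}\{u_0\ge s\}$, and taking the sup over such $s$ yields $T_{h,t}(\tau_z u_0)(x)=T_{h,t}u_0(x-z)=\tau_z(T_{h,t}u_0)(x)$.

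\textbf{Commutation with constants.} The decisive observation is that the superlevel sets of $u_0+c$ are reindexed superlevel sets of $u_0$: $\{u_0+c\ge s\}=\{u_0\ge s-c\}$. Thus $x\in T_{h,t}\{u_0+c\ge s\}$ iff $x\in T_{h,t}\{u_0\ge s-c\}$, and so
\[
T_{h,t}(u_0+c)(x)=\sup\{s : x\in T_{h,t}\{u_0\ge s-c\}\}=c+\sup\{s' : x\in T_{h,t}\{u_0\ge s'\}\}=c+T_{h,t}u_0(x),
\]
by the change of variable $s'=s-c$. The main obstacle --- and the only point requiring genuine care rather than bookkeeping --- is ensuring that all the set-level comparison and translation statements are invoked in the correct ``minimal minimizer'' form so that the sup in \eqref{def operator} behaves monotonically; the non-strict-inclusion versions of Lemmas \ref{comparison principle} and \ref{comparison principle, unbounded}, together with Lemma \ref{comparison principle, bounded-unbounded} for the mixed bounded/unbounded case, are exactly what make this work, and one should also note that the superlevel sets $\{u_0\ge s\}$ of a function constant outside a compact set are, for each $s$, either bounded or of bounded complement, so the scheme in \eqref{incremental pb} and \eqref{def unbounded} applies to each of them.
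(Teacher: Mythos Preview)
Your proof is correct and follows essentially the same route as the paper's own argument: monotonicity from the set-level comparison principles (Lemmas~\ref{comparison principle} and~\ref{comparison principle, unbounded}), translation invariance from the translation invariance of $\mathscr F^E_{h,t}$ together with $\{\tau_z u_0\ge s\}=z+\{u_0\ge s\}$, and commutation with constants from the reindexing $\{u_0+c\ge s\}=\{u_0\ge s-c\}$. If anything, you are slightly more careful than the paper, which does not explicitly invoke Lemma~\ref{comparison principle, bounded-unbounded} for the mixed bounded/unbounded situation nor stress the ``minimal minimizer'' form of the comparison; these are exactly the points that make the non-strict inclusion work, so your added remarks are justified rather than superfluous.
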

\begin{proof}
    The first assertion follows from Lemma \ref{comparison principle} and \ref{comparison principle, unbounded}. The second one follows easily employing the definition \eqref{def operator}, recalling the fact that the functional defined in \eqref{def mathscr} is invariant under translations and that $\{ \tau_z u_0\ge \lambda \}=\{ u_0\ge \lambda \}+z$ for all $\lambda \in\R.$ The last result follows analogously.
\end{proof}
The previous properties satisfied by the operator, in turn, preserve the continuity in space of the initial function.
% \begin{lemma}[Continuity in space]
%     Fix $h>0$. Assume that $u_0\in BV(\R^N)\cap Lip_L(\R^N)$ and it is constant outside a compact set. Then, $u_h$ defined in \eqref{def increm subsol} is $L-$Lipschitz continuous in space for every $t\in [0,+\infty)$.
% \end{lemma}
% \begin{proof}
%     For any $x,y,z\in\R^N$ we have by $L-$Lipschitz continuity of $u_0$ that 
%     \[ u_0(y+z)-L|x-y|\le u_0(x+z)\le u_0(y+z)+L|x-y|. \]
%     Since $T_h$ is a monotone operator, we have 
%     \[ T_h(u_0(\cdot+y)-L|x-y|\le T_h u_0(\cdot+x)\le T_h(u_0(\cdot + y)+L|x-y|), \]
%     and we use the translation invariance and the fact that it commutes with constants to reach
%     \[ (T_h u_0)(\cdot+y)-L|x-y|\le (T_h u_0)(\cdot+x)\le (T_h u_0)(\cdot + y)+L|x-y|). \]
%     Considering the value in $0$ of the previous expression, we recover the $L-$Lipschitz condition for $T_h u_0=u_h(\cdot,h)$. Arguing by induction we conclude.
% \end{proof}
Indeed, assume $u_0$ is uniformly continuous and let $\omega:\R_+\to \R_+$ be an increasing, continuous modulus of continuity for $u_0$. Then, for any $s>s'$ we have 
\[ \{ u>s \}+B_{\omega^{-1}(s-s')}\subseteq \{ u>s' \}, \]
thus, by translation invariance we deduce 
\[ T_{h,t}\{ u>s \}+B_{\omega^{-1}(s-s')}\subseteq T_{h,t}\{ u>s' \}. \]
This inclusion implies that the function $T_{h,t} u_0$ is uniformly continuous in space, with the same modulus of continuity $\omega$ of $u_0$.

The following lemma provides an estimate on the continuity in time of $u_h$. 
\begin{lemma}\label{lemma 6.12 Nonlocal}
    Fix $t,h>0$ and $u_0$ a uniformly continuous function. For all $\lambda\in\R$ it holds 
    \[  T_{h,t}\{ u_h(\cdot,t)> \lambda \}=\{ u_h(\cdot,t+h)>\lambda\} ,\quad  T_{h,t}^+\{ u_h(\cdot,t)\ge  \lambda \}=\{ u_h(\cdot,t+h)\ge \lambda \}. \]
\end{lemma}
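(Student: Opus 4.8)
The plan is to reduce both identities to a monotone-stability property of the incremental operators, the rest being bookkeeping with the comparison principles. Write $v:=u_h(\cdot,t)$ (uniformly continuous and constant outside a compact set) and $w:=T_{h,t}v=u_h(\cdot,t+h)$. Directly from the definition \eqref{def operator}, $w(x)>\lambda$ if and only if $x\in T_{h,t}^-\{v\ge s\}$ for some $s>\lambda$, so that
\[
\{w>\lambda\}=\bigcup_{s>\lambda}T_{h,t}^-\{v\ge s\},\qquad \{w\ge\lambda\}=\bigcap_{\mu<\lambda}\{w>\mu\}.
\]
Thus the first identity is equivalent to $T_{h,t}^-\{v>\lambda\}=\bigcup_{s>\lambda}T_{h,t}^-\{v\ge s\}$, and — granting the first — the second reduces to $T_{h,t}^+\{v\ge\lambda\}=\bigcap_{\mu<\lambda}T_{h,t}^-\{v>\mu\}$.

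I would first dispatch the ``easy'' inclusions by monotonicity. Since $\{v\ge s\}\subseteq\{v>\lambda\}$ for $s>\lambda$, the comparison principle (Lemma \ref{comparison principle}, together with Lemma \ref{comparison principle, unbounded} when the relevant superlevel sets have bounded complement) gives $T_{h,t}^-\{v\ge s\}\subseteq T_{h,t}^-\{v>\lambda\}$, hence $\bigcup_{s>\lambda}T_{h,t}^-\{v\ge s\}\subseteq T_{h,t}^-\{v>\lambda\}$. For the second, for $\mu<\lambda$ the closed set $\{v\ge\lambda\}$ sits inside the open set $\{v>\mu\}$ with disjoint boundaries; chaining the strong forms of the comparison principles (Lemmas \ref{comparison principle}, \ref{comparison principle, unbounded}, \ref{comparison principle, bounded-unbounded}) across the various boundedness regimes then shows that \emph{every} minimizer of $\mathscr F^{\{v\ge\lambda\}}_{h,t}$ is contained in \emph{every} minimizer of $\mathscr F^{\{v>\mu\}}_{h,t}$; in particular $T_{h,t}^+\{v\ge\lambda\}\subseteq T_{h,t}^-\{v>\mu\}$, so $T_{h,t}^+\{v\ge\lambda\}\subseteq\bigcap_{\mu<\lambda}T_{h,t}^-\{v>\mu\}$.

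The core is the two reverse inclusions, which I would obtain from a single \emph{stability lemma}: if $(G_n)_n$ are nested superlevel (or strict superlevel) sets of $v$ with $G_n\nearrow G$ (resp.\ $G_n\searrow G$) and $E_n$ is a minimizer of $\mathscr F^{G_n}_{h,t}$ with $(E_n)_n$ monotone, then $\bigcup_nE_n$ (resp.\ $\bigcap_nE_n$) is a minimizer of $\mathscr F^{G}_{h,t}$. Granting this, taking $G_n=\{v\ge\lambda+\tfrac1n\}\nearrow\{v>\lambda\}$ and $E_n=T_{h,t}^-G_n$ (increasing, by comparison), the set $\bigcup_{s>\lambda}T_{h,t}^-\{v\ge s\}$ is a minimizer of $\mathscr F^{\{v>\lambda\}}_{h,t}$ and is contained in the \emph{minimal} minimizer $T_{h,t}^-\{v>\lambda\}$ by the previous paragraph; hence the two coincide, which is the first identity. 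Likewise, taking $\mu_n\uparrow\lambda$, $G_n=\{v>\mu_n\}\searrow\{v\ge\lambda\}$ and $E_n=T_{h,t}^-G_n$ (decreasing), the set $\bigcap_{\mu<\lambda}T_{h,t}^-\{v>\mu\}$ is a minimizer of $\mathscr F^{\{v\ge\lambda\}}_{h,t}$, hence contained in the \emph{maximal} minimizer $T_{h,t}^+\{v\ge\lambda\}$; together with the easy inclusion this gives the second identity.

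It remains to prove the stability lemma, along the lines of the proofs of Lemma \ref{lemma existence} and Corollary \ref{corollary approx}. Since $v$ is continuous, $\sd_{G_n}\to\sd_{G}$ monotonically — hence, by Dini's theorem, locally uniformly — so that by the $L^1_{loc}$-lower semicontinuity of $J$, the convergence $|E_n|\to|E_\infty|$ (of the sets or of their complements) and Fatou's lemma one gets $\mathscr F^{G}_{h,t}(E_\infty)\le\liminf_n\mathscr F^{G_n}_{h,t}(E_n)$, while testing the minimality of $E_n$ against a fixed competitor $F$ gives $\limsup_n\mathscr F^{G_n}_{h,t}(E_n)\le\mathscr F^{G}_{h,t}(F)$. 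I expect the genuine obstacle to be making this limiting procedure rigorous in the nonlinear setting: in the linear case $g=\mathrm{id}$ of \cite{ChaMorPon15} it is immediate, but here $g$ may be unbounded (when $a$ or $b$ is finite) or discontinuous (when $\G$ is locally constant), so the dissipation term need not be integrable nor converge pointwise, and one must run the whole argument on the truncations $g\vee(-n)$ and $g\wedge n$, recovering $T_{h,t}^\pm$ as monotone limits of minimizers of the truncated functionals via Corollaries \ref{corollary approx} and \ref{corollary approx unbounded}. A further, purely bookkeeping, point is that a superlevel set of $v$ may be bounded or may have bounded complement according to $\lambda$ and the value of $v$ at infinity, so each step must be performed in both regimes, using \eqref{incremental pb} in the first and \eqref{def unbounded} together with Lemmas \ref{comparison principle, unbounded} and \ref{comparison principle, bounded-unbounded} in the second.
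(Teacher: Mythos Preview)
Your proposal is correct, but it takes a substantially different and heavier route than the paper.

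The paper gives a three-line sandwich argument. From the definition \eqref{def operator} of $T_{h,t}u_0$ and the comparison principles one has, for every $\e>0$,
\[
\{T_{h,0}u_0>\lambda+\e\}\subseteq T_{h,0}^\pm\{u_0>\lambda\}\subseteq\{T_{h,0}u_0>\lambda-\e\};
\]
sending $\e\to0$ yields $\{u_h(\cdot,h)>\lambda\}\subseteq T_{h,0}^\pm\{u_0>\lambda\}\subseteq\{u_h(\cdot,h)\ge\lambda\}$, and the paper then concludes directly from the continuity of $u_h(\cdot,h)$ (so that the open superlevel is the interior of the closed one and the closed superlevel is already closed). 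No energy comparison, no stability lemma, no truncation is invoked at this stage --- the nonlinear function $g$ never appears.

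Your approach instead characterizes $\{w>\lambda\}$ as $\bigcup_{s>\lambda}T_{h,t}^-\{v\ge s\}$ and then upgrades the easy inclusion to an equality by proving a monotone-stability lemma for the incremental functional: the union is itself a minimizer of $\mathscr F^{\{v>\lambda\}}_{h,t}$, hence must coincide with the \emph{minimal} minimizer $T_{h,t}^-\{v>\lambda\}$ (and dually for the closed superlevels). This is correct and more self-contained --- in particular it makes precise why the identities hold \emph{as equalities of measurable sets} without appealing to any topological regularity of minimizers, and you correctly flag that the limiting step must go through the truncations of Corollaries~\ref{corollary approx} and~\ref{corollary approx unbounded} when $g$ is unbounded or discontinuous. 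The price is that you are essentially reproving a parametrized version of the existence theory (Lemma~\ref{lemma existence}) inside this lemma, which is considerably more work than the paper's argument. What you gain is a fully variational proof; what you lose is brevity.
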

\begin{proof}
    Given $\e>0$, by definition it is easy to see that 
    $$\{ T_{h,0} u_0> \lambda+\e\}\subseteq T^\pm_{h,0} \{u_0> \lambda\}\subseteq \{ T_{h,0} u_0> \lambda-\e\}.$$
    Passing to the limit $\e\to 0$, we deduce 
    \[ 
     {\{ u_h(\cdot,h)>  \lambda\}}\subseteq T_{h,0}^\pm \{u_0> \lambda\}\subseteq \{ u_h(\cdot,h)\ge \lambda\}. \]
    Finally, since $u_h(\cdot,h)$ is a continuous function, the equalities $\{ u_h(\cdot,h)> \lambda \}=\text{int}\{u_h(\cdot,h)\ge \lambda \}$ and $\{ u_h(\cdot,h)\ge  \lambda \}=\overline{\{u_h(\cdot,h)\ge \lambda \}}$ hold and we prove the result for $t=h$. The  other cases follow by iteration.
\end{proof}
With the previous results and reasoning exactly  as in \cite[Lemma 6.13]{ChaMorPon15}, we can prove that the functions $u_h$ are uniformly continuous in time.
\begin{lemma}
    For any $\e>0$, there exists $\tau>0$ and $h_0=h_0(\e)>0$ such that for all $|t-t'|\le \tau$ and $h\le h_0$ we have $|u_h(\cdot,t)-u_h(\cdot,t')|\le \e.$
\end{lemma}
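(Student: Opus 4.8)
The plan is to adapt the argument of \cite[Lemma~6.13]{ChaMorPon15}: we prove the estimate pointwise, trapping each superlevel set of $u_h$ between a fixed ball and a fixed complement of a ball and then controlling how fast these two barriers evolve under the scheme. We may assume $t'\le t$, and it suffices to treat small $\e$ (larger $\e$ following a fortiori). Since $s\mapsto u_h(\cdot,s)$ is constant on each interval $[kh,(k+1)h)$, we may also take $t,t'$ to be integer multiples of $h$, the general case being recovered by enlarging $\tau$ by $h_0$. The crucial input is that, by translation invariance of $T_{h,t}$ and the discussion preceding Lemma~\ref{lemma 6.12 Nonlocal}, every $u_h(\cdot,t')$ is uniformly continuous with the modulus $\omega$ of $u_0$, with no dependence on $h$ or $t'$.

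Fix $x_0\in\R^N$, set $\lambda:=u_h(x_0,t')$ and $r:=\omega^{-1}(\e)>0$. From the modulus of continuity, $B_r(x_0)\subseteq\{u_h(\cdot,t')\ge\lambda-\e\}$ and $\{u_h(\cdot,t')>\lambda+\e\}\subseteq\R^N\setminus B_r(x_0)$ (the cases in which a superlevel set is empty or all of $\R^N$ being trivial). By Lemma~\ref{lemma 6.12 Nonlocal}, the set $\{u_h(\cdot,t)\ge\lambda-\e\}$ is obtained by iterating $T_{h,\cdot}^+$ over the $[t/h]-[t'/h]$ intermediate steps starting from $\{u_h(\cdot,t')\ge\lambda-\e\}$, and likewise $\{u_h(\cdot,t)>\lambda+\e\}$ is obtained by iterating $T_{h,\cdot}=T_{h,\cdot}^-$ starting from $\{u_h(\cdot,t')>\lambda+\e\}$. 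Combining this with translation invariance and the comparison principles — Lemma~\ref{comparison principle} for the first inclusion, and Lemma~\ref{comparison principle, unbounded} together with Lemma~\ref{comparison principle, bounded-unbounded} (the latter needed when $\{u_h(\cdot,t')>\lambda+\e\}$ happens to be bounded) for the second — we deduce that $\{u_h(\cdot,t)\ge\lambda-\e\}$ contains the evolution of $B_r(x_0)$ under the same iteration of $T_{h,\cdot}^+$, while $\{u_h(\cdot,t)>\lambda+\e\}$ is contained in the evolution of $\R^N\setminus B_r(x_0)$ under the iteration of $T_{h,\cdot}^-$.

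It remains to control these two evolving balls over a short time. By Lemmas~\ref{evolution bounded sets} and~\ref{lemma estimates on balls} and the complement versions in Lemma~\ref{comparison principle, unbounded}, one step of the scheme alters the radius of a ball whose radius lies in $[r/2,2r]$ by at most $h\,V_\e$, where $V_\e<+\infty$ depends only on $\e$ and on the fixed data of the problem; it is finite because $\overline c$ is continuous on $(0,+\infty)$ by~(C), hence bounded on $[r/2,2r]$, and $\G$ is real-valued, so the possibly infinite endpoints $a,b$ play no role once the radius is bounded away from $0$. A bootstrap on the number of steps then shows that, for a suitable $\tau=\tau(\e)$ and every $h_0\le\tau$, whenever $t-t'\le\tau$ and $h\le h_0$ the evolution of $B_r(x_0)$ contains $B_{r/2}(x_0)$ and the evolution of $\R^N\setminus B_r(x_0)$ is contained in $\R^N\setminus B_{r/2}(x_0)$. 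Since $x_0$ is the common centre, the first fact gives $u_h(x_0,t)\ge\lambda-\e$ and the second $u_h(x_0,t)\le\lambda+\e$, i.e. $|u_h(x_0,t)-u_h(x_0,t')|\le\e$; as $x_0$ was arbitrary we obtain $|u_h(\cdot,t)-u_h(\cdot,t')|\le\e$.

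The step I expect to be the main obstacle is this last bootstrap in the unbounded cases $a=+\infty$ or $b=+\infty$, where a ball of very small radius may shrink or expand arbitrarily fast: the resolution is that we never need more than the survival, with radius at least $\omega^{-1}(\e)/2$, of the single fixed-size ball $B_{\omega^{-1}(\e)}(x_0)$ over the short time $\tau(\e)$, and in that radius range all ball--speed estimates of Section~\ref{sect:mms} are uniformly bounded by constants depending only on $\e$. A minor but necessary point is to distinguish, in the upper bound, whether $\{u_h(\cdot,t')>\lambda+\e\}$ is bounded — in which case Lemma~\ref{comparison principle, bounded-unbounded} applies — or co-bounded, in which case one uses the unbounded comparison of Lemma~\ref{comparison principle, unbounded}.
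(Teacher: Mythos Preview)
Your proposal is correct and follows essentially the same approach as the paper, which simply invokes \cite[Lemma~6.13]{ChaMorPon15}; your argument is precisely the adaptation of that lemma to the present setting, using the ball-speed estimates of Lemmas~\ref{lemma existence}, \ref{evolution bounded sets}, \ref{lemma estimates on balls} and \ref{comparison principle, unbounded} together with the comparison principles and Lemma~\ref{lemma 6.12 Nonlocal}.
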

Thus, the family $\{u_h\}_{h>0}$ is equicontinuous and uniformly bounded as implied by Lemma \ref{evolution bounded sets}.  By the Ascoli-Arzelà theorem  we can pass to the limit $h\to 0$ (up to subsequences) to conclude that $u_h\to u$  uniformly in any compact in time subset of $\R^N\times [0,+\infty)$, with $u$ being a uniformly continuous function. Moreover, the function $u$ is bounded and constant outside a compact set (as implied by Lemma \ref{evolution bounded sets}). 
\begin{prop}\label{existence by approx}
    Let $T>0$.  Up to a subsequence, the family $\{u_h\}_{h>0}$ defined in \eqref{def uh} converges  uniformly on $\R^N\times [0,T]$ to a uniformly continuous function $u$, which is bounded and constant out of a compact set.
\end{prop}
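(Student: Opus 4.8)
The plan is to apply the Ascoli--Arzelà theorem to $\{u_h\}_{h>0}$ after reducing to a fixed bounded region, using the monotonicity, translation invariance and constant-commutation of $T_{h,t}$ recorded above, together with the speed estimates for balls of Lemmas \ref{evolution bounded sets} and \ref{comparison principle, unbounded}. Note first that $u_0$, being continuous and constant outside a compact set, is automatically uniformly continuous; fix an increasing continuous modulus of continuity $\omega$ for it, and write $u_0\equiv c$ outside a ball $B_{R_0}$.

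I would first establish uniform bounds and a uniform support. Comparing $u_h$ with the constant functions $\pm\|u_0\|_{L^\infty}$ (using that $T_{h,t}$ is monotone and commutes with constants) gives $\|u_h(\cdot,t)\|_{L^\infty}\le\|u_0\|_{L^\infty}$ for all $t\in[0,T]$ and $h>0$. For the support, observe that $\{u_0\ge\lambda\}\subseteq B_{R_0}$ for $\lambda>c$, whereas $\{u_0\ge\lambda\}\supseteq B_{R_0}^c$ for $\lambda<c$. By Lemma \ref{lemma 6.12 Nonlocal} the superlevel sets of $u_h(\cdot,(k+1)h)$ are obtained by applying $T_{h,kh}^\pm$ to those of $u_h(\cdot,kh)$; iterating the ball-speed estimate of Lemma \ref{evolution bounded sets} (or Lemma \ref{lemma existence} when $b<+\infty$) with the monotonicity of Lemma \ref{comparison principle} gives $\{u_h(\cdot,t)\ge\lambda\}\subseteq B_{R_0+Ct}$ for $\lambda>c$, while iterating the second item of Lemma \ref{comparison principle, unbounded} gives $\{u_h(\cdot,t)\ge\lambda\}\supseteq B_{R_0+Ct}^c$ for $\lambda<c$, with $C>0$ independent of $h$. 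Hence $u_h(x,t)=c$ whenever $|x|>R_1:=R_0+CT$, uniformly in $h$ and $t\in[0,T]$.

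Next comes equicontinuity. In space, the inclusion $T_{h,t}\{u>s\}+B_{\omega^{-1}(s-s')}\subseteq T_{h,t}\{u>s'\}$ proved above (combined with $u_h(\cdot,t)=u_h(\cdot,[t/h]h)$) shows every $u_h(\cdot,t)$ shares the modulus $\omega$, uniformly in $h$ and $t$; in time, the lemma immediately preceding the statement provides, for each $\e>0$, numbers $\tau>0$ and $h_0>0$ with $|u_h(\cdot,t)-u_h(\cdot,t')|\le\e$ whenever $|t-t'|\le\tau$ and $h\le h_0$. Thus $\{u_h\}_{h\le h_0}$ is equibounded and equicontinuous on $\overline{B_{R_1}}\times[0,T]$, and Ascoli--Arzelà yields a sequence $h_j\to0$ with $u_{h_j}\to u$ uniformly on $\overline{B_{R_1}}\times[0,T]$, $u$ continuous. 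Extending $u$ by the constant $c$ outside $B_{R_1}$ --- where every $u_{h_j}$ already equals $c$ --- the convergence is uniform on all of $\R^N\times[0,T]$. Passing to the limit in the spatial and temporal moduli of continuity shows $u$ is uniformly continuous; by the previous paragraph it is bounded and constant ($\equiv c$) outside $B_{R_1}$, which finishes the argument.

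The only step needing genuine care is the uniform-in-$h$ control of the support: one must treat the superlevel sets above and below the value at infinity separately --- the former remain bounded with radius growing at most linearly in $t$ at a rate independent of $h$ (Lemma \ref{evolution bounded sets}), the latter have bounded complement growing likewise (Lemma \ref{comparison principle, unbounded}) --- and then sum the at most $[t/h]$ per-step increments, each of size $O(h)$, to obtain an $h$-independent bound. Everything else is the standard Ascoli--Arzelà argument combined with the monotonicity and constant-commutation of the scheme.
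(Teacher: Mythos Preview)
Your proposal is correct and follows essentially the same approach as the paper, which in fact does not give a separate proof of the proposition but only the short paragraph preceding it: equicontinuity (spatial via translation invariance, temporal via the preceding lemma), uniform boundedness and support via Lemma~\ref{evolution bounded sets}, then Ascoli--Arzel\`a. You have merely spelled out more carefully the uniform-in-$h$ control of the support by treating levels above and below the value at infinity separately (invoking Lemma~\ref{comparison principle, unbounded} for the unbounded superlevel sets), which the paper leaves implicit in its citation of Lemma~\ref{evolution bounded sets}.
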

We can thus state our main result.
\begin{teo}\label{teo sol viscosa}
    The function $u$ defined in Proposition \ref{existence by approx} is a continuous viscosity solution to the Cauchy problem \eqref{eq:level_set_nonlocal}.
\end{teo}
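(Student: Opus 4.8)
The plan is to follow the now-classical strategy for showing that minimizing-movements limits are viscosity solutions, adapting the argument of \cite[Theorem 6.14]{ChaMorPon15} to the nonlinear and forced setting. I will only verify the subsolution property; the supersolution one is symmetric (one replaces $T_{h,t}$ by $T_{h,t}^+$ and uses the unbounded-set scheme or passes to complements as in \eqref{def unbounded}). So let $\hat z=(\hat x,\hat t)\in\R^N\times(0,T)$ and let $\eta$ be an admissible test function at $\hat z$ such that $u-\eta$ has a strict maximum at $\hat z$ (legitimate by the Remark after the definition of viscosity solution). Normalising, assume $u(\hat z)=\eta(\hat z)=:\lambda$, so that $\hat x\in\partial\{u(\cdot,\hat t)\ge\lambda\}$ and, by uniform convergence $u_h\to u$ together with the strict-maximum property, one can select times $t_h\to\hat t$ and points $x_h\to\hat x$ at which $u_h(\cdot,t_h)-\eta(\cdot,t_h)$ attains a maximum, with $x_h\in\partial\{u_h(\cdot,t_h)\ge\lambda_h\}$ for suitable $\lambda_h\to\lambda$; here I use Lemma~\ref{lemma 6.12 Nonlocal} to identify $\{u_h(\cdot,t_h+h)\ge\lambda_h\}$ with the image of $\{u_h(\cdot,t_h)\ge\lambda_h\}$ under $T_{h,t_h}^{\pm}$. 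From the maximality, the sublevel set $\{u_h(\cdot,t_h+h)\ge\lambda_h\}$ is contained in a fixed sublevel set of $\eta(\cdot,t_h+h)$ and, after the normalisation subtracting $\eta_t(\hat z)(t-\hat t)$, contains it locally near $x_h$; so $T_{h,t_h}$ applied to a smooth set is squeezed between two smooth sets, and one reads off a discrete Euler--Lagrange inequality.

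The two cases are then treated separately. \textbf{Case $\nabla\eta(\hat z)\neq0$:} Near $\hat z$ the level set $\Gamma_h:=\partial\{\eta(\cdot,t_h)\ge\lambda\}$ is a smooth hypersurface with outer normal $\nu_h\to\nu:=\nabla\eta(\hat z)/|\nabla\eta(\hat z)|$; testing the minimality of $T_{h,t_h}\{u_h(\cdot,t_h)\ge\lambda_h\}$ against small inward/outward perturbations localised at $x_h$ (shrinking balls $W_\e$ as in Definition~\ref{def kappa}), dividing by $|W_\e|$ and using the submodularity of $J$ together with the very definition \eqref{eq:def_kappa} of the variational curvature, one obtains
\[
\frac{\eta(x_h,t_h+h)-\eta(x_h,t_h)}{h}\ \psi^\circ(\nu_h)^{-1}\ \lesssim\ g\!\left(\tfrac{1}{h}\,\mathrm{sd}^\psi\right)\quad\text{evaluated near }x_h,
\]
more precisely the discrete displacement of the front at $x_h$ equals, up to $o(1)$, the value $\frac{h}{\psi^\circ(\nu_h)}\psi(\nu_h)\,\mathbf G(-\kappa(x_h,\{\eta(\cdot,t_h)\ge\lambda\})+f([t_h/h]h))$ — here is where $\mathbf G$ and the forcing $f$ enter, through the scheme's dissipation term \eqref{def mathscr} and the identity $\mathbf G\circ g=\mathrm{id}$. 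Passing to the limit $h\to0$, using continuity of $\mathbf G$, of $\mathtt f$ (so $f([t_h/h]h)\to\mathtt f(\hat t)$), of $\psi$, and the $C^2$-continuity axiom (C) for $\kappa$ so that $\kappa(x_h,\{\eta(\cdot,t_h)\ge\lambda\})\to\kappa(\hat x,\{\eta(\cdot,\hat t)\ge\lambda\})$, yields exactly \eqref{eq viscosa}. \textbf{Case $\nabla\eta(\hat z)=0$:} Here one uses the admissibility function $\ell\in\mathcal L$ and $\omega$ from Definition~\ref{def visco sol}. The sublevel set of $u_h(\cdot,t_h+h)$ near $x_h$ is trapped inside $\{|x-x_h|\le r_h\}$ for a radius $r_h$ controlled by $\ell$ and $\omega$; applying the speed estimate for balls — Lemma~\ref{lemma estimates on balls} if $a=+\infty$, otherwise the easier inclusion $T_{h,t}^\pm B_R\supseteq B_{R-ah}$ from Lemma~\ref{lemma existence}, and analogously Lemma~\ref{evolution bounded sets} from the other side — together with $\lim_{\rho\to0^+}\ell'(\rho)\mathbf G(\overline c(\rho))=0$ from the definition of $\mathcal L$, one shows that any displacement of the front at $x_h$ is $o(h)$, which forces $\eta_t(\hat z)\le0$, i.e.\ \eqref{eq viscosa degen}. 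The boundary condition $u(\cdot,0)\le u_0$ is immediate since $u_h(\cdot,0)=u_0$ and $u_h\to u$ uniformly.

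The main obstacle is the potential \emph{degeneracy coming from boundedness of $\mathbf G$}. When $a<+\infty$ or $b<+\infty$ one cannot use the reformulation \eqref{pb equi inf} (the term $\int_E g(\mathrm{sd}^\psi_E/h)$ is infinite), so the discrete Euler--Lagrange computation above must instead be run on the approximating functionals $\mathscr F^{E,n}_{h,t}$ of \eqref{funct coroll}–\eqref{funct coroll2}, passing to the limit in $n$ via Corollary~\ref{corollary approx} (and Corollary~\ref{corollary approx unbounded} for the unbounded branch) \emph{before} letting $h\to0$; keeping these two limits from interfering — in particular ensuring the test-set perturbations $W_\e$ see $g_n$ rather than $g$ in the relevant region, which is fine because near the free boundary $\mathrm{sd}^\psi_E/h$ is of order one and $g_n=g$ there for $n$ large — is the delicate bookkeeping point. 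A secondary technical nuisance is that $g$ is only a \emph{selection} of $\mathbf G^{-1}$, so the intermediate inequalities are genuinely one-sided ($\liminf$/$\limsup$ of $g$ at the relevant argument), but applying the monotone continuous $\mathbf G$ collapses them onto the single-valued quantity $\psi(\nu)\mathbf G(-\kappa+\mathtt f)$, exactly as in the proofs of Lemmas~\ref{evolution bounded sets} and \ref{lemma estimates on balls}.
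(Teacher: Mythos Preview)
Your proposal is correct and follows essentially the same route as the paper. The paper's proof does exactly what you outline: in the non-degenerate case it perturbs the test function to $\eta^\sigma_{h_k}(x)=\eta(x,t_k)+c_k+\tfrac{\sigma}{2}|x-x_k|^2$, works on the approximating minimizers $E^k_{\e,n}$ of Corollary~\ref{corollary approx}, tests minimality against the intersection with $\{\eta^\sigma_{h_k}>l_k+\e\}$, uses submodularity together with Definition~\ref{def kappa}, and then applies $\G$ before sending $\e\to0$ (hence $n(\e,k)\to\infty$), then $\sigma\to0$ and $k\to\infty$; in the degenerate case it goes through Lemma~\ref{lemma evol balls in levelsets} and the extinction-time estimate~\eqref{eq 6.32 Nonlocal}, which is precisely the ball-speed argument you describe.

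One point you gloss over, and which the paper isolates at the very start of Case~1 (see~\eqref{rmk test}): when $a<+\infty$, your assertion that ``$g_n=g$ there for $n$ large'' can fail exactly when $\partial_t\eta(\hat z)/\psi(\nabla\eta(\hat z))\le -a$, since then $g$ evaluated at the relevant argument is $-\infty$ and the truncation never becomes inactive. The paper disposes of this regime first, observing that in that case the subsolution inequality~\eqref{eq viscosa} is trivially satisfied (because $\G\ge -a$ everywhere), so one may assume $\partial_t\eta(\hat z)/\psi(\nabla\eta(\hat z))>-a$ throughout; under this standing assumption the Taylor expansion shows $\sd_E/h$ stays strictly above $-a$ in $W^k_{\e,n}$, your heuristic becomes rigorous, and the rest of your sketch goes through verbatim.
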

We finally recall the notion of  a level-set solution to the evolution equation \eqref{law evol} (see e.g. \cite{Gig-book}).
\begin{defin}
    Given an initial bounded set $E_0$ (or unbounded set with bounded complement) define an uniformly continuous function $u_0:\R^N\to\R$ such that $\{ u_0>0 \}=E_0.$ Then, setting $u$ as the solution to \eqref{eq:level_set_nonlocal} with initial datum $u_0$ given by Theorem \ref{teo sol viscosa}, we define the level-set solution to the nonlinear mean curvature evolution \eqref{law evol} of $E_0$ as
    \[ E_t:=\{ u(\cdot,t)>0 \}. \]
\end{defin}

\subsection{Proof of the main result}
We start by an estimate on the evolution speed.  For every $r>0$, using the notation of Lemma~\ref{lemma estimates on balls}, we set
\[\hat \kappa(r)=\min\left\lbrace -1, \tfrac1{c_\psi}\G\left(-\overline c(r)-\|f\|_\infty\right) \right\rbrace\]
and, given $r_0>0$, we set $r(t)$ as the unique solution to
\begin{equation}\label{ODE}
    \begin{cases}
        \dot r(t)=\hat \kappa(r(t))\\
        r(0)=r_0.
    \end{cases}
\end{equation}
Note that, in general, the solution $r(t)$ will exist in a finite time interval $[0,T^*(r_0)],$ where $T^*(r_0)$ denotes the extinction time of the solution starting from $r_0$ i.e. the first time $t$ such that $r(t)=0.$

\begin{lemma}\label{lemma evol balls in levelsets}
    Let $u$ be the function given by Proposition \ref{existence by approx} and assume that there exists $\lambda\in\R$ such that $B(x_0,r_0)\subseteq \{ u(\cdot,t_0)>\lambda \}.$ Then, if $a=+\infty$, it holds 
    \[B(x_0,r(t-t_0))\subseteq \{ u(\cdot,t)>\lambda \}\]
    for every $t\le T^*(r_0)+t_0$, where $r(t)$ is the solution to \eqref{ODE} with extinction time $T^*(r_0)$. 
    If instead $a<+\infty$ it holds
    \[B(x_0,r_0-a(t-t_0))\subseteq \{ u(\cdot,t)>\lambda \}\]
    for all $t$ such that $r_0-a(t-t_0)\ge 0$. 
    The same result holds for sublevels substituting superlevel sets.
\end{lemma}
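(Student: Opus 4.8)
The plan is to push the inclusion down to the discrete scheme $u_h$ from \eqref{def uh}, iterate the ball estimate of Lemma~\ref{lemma estimates on balls}, and then pass to the limit $h\to0$. I describe the superlevel case with $a=+\infty$; the case $a<+\infty$ is simpler (the ball estimate of Lemma~\ref{lemma existence}, $T_{h,t}^\pm B_R\supseteq (B_R)_{-ah}$, is affine in $h$, so no ODE limit is needed and one iterates it directly), and the statement for sublevel sets follows by the symmetric argument applied to $-u$, whose strict superlevel sets are the strict sublevel sets of $u$ and which is the limit of the discrete scheme built from $\tilde J$, $\tilde g$ and $-\mathtt f$; the ball estimates needed there are exactly those collected in Lemma~\ref{comparison principle, unbounded}.

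\emph{Step 1 (reduction to the discrete scheme).} Fix small $\e,\delta>0$. Since $u$ is continuous and $\overline{B(x_0,r_0-\e)}$ is a compact subset of the open set $\{u(\cdot,t_0)>\lambda\}$, the uniform convergence $u_h\to u$ of Proposition~\ref{existence by approx} gives $\overline{B(x_0,r_0-\e)}\subseteq\{u_h(\cdot,t_0)>\lambda+\delta\}$ for $h$ small. Recall that $u_h(\cdot,t)$ is piecewise constant in time and that, by Lemma~\ref{lemma 6.12 Nonlocal}, the strict superlevel set $\{u_h(\cdot,t)>\lambda+\delta\}$ is transported in one time step by the monotone and translation invariant operator $T_{h,\cdot}$; hence it suffices to follow the images of $B(x_0,r_0-\e)$ under the iterates of $T_{h,\cdot}$.

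\emph{Step 2 (iterating the ball estimate).} Fix a target time $t$ with $t_0<t<t_0+T^*(r_0)$, so that $r(\cdot)\ge r(t-t_0)>0$ on $[0,t-t_0]$. Choosing first $\e$ small, then $\sigma>1$ close to $1$, then $h$ small, one arranges that the Euler iteration
\[
\rho_0:=r_0-\e,\qquad \rho_{j+1}:=\rho_j+\tfrac{h}{c_\psi}\,\G\!\big(-\overline c(\rho_j/\sigma)-\|\mathtt f\|_\infty\big)
\]
stays $\ge R_0:=\tfrac14 r(t-t_0)$ for all $j$ with $jh\le t-t_0$: indeed the $\rho_j$ track the solution of $\dot\rho=\tfrac1{c_\psi}\G(-\overline c(\rho/\sigma)-\|\mathtt f\|_\infty)$, which lies close to and stays above $\tfrac12 r$ on $[0,t-t_0]$ once $\e$ and $\sigma-1$ are small (using that $\hat\kappa(\cdot)\le\tfrac1{c_\psi}\G(-\overline c(\cdot)-\|\mathtt f\|_\infty)$ and ODE comparison). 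Then, by induction on $j$, using monotonicity and translation invariance of $T_{h,\cdot}$ together with \eqref{decay speed ball} of Lemma~\ref{lemma estimates on balls} applied with $R=\rho_j\ge R_0$, one gets $B(x_0,\rho_j)\subseteq\{u_h(\cdot,t_0+jh)>\lambda+\delta\}$ for every such $j$.

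\emph{Step 3 (limits) and main difficulty.} As $h\to0$, the Euler iteration converges to the solution $\rho_\sigma$ of $\dot\rho=\tfrac1{c_\psi}\G(-\overline c(\rho/\sigma)-\|\mathtt f\|_\infty)$ with $\rho_\sigma(0)=r_0-\e$, and $u_h\to u$ yields $\overline{B(x_0,\rho)}\subseteq\{u(\cdot,t)\ge\lambda+\delta\}$ for every $\rho<\rho_\sigma(t-t_0)$. Since $\overline c$ is continuous by axiom (C), $\rho_\sigma\to\rho_1$ uniformly on $[0,t-t_0]$ as $\sigma\to1^+$, where $\rho_1$ solves $\dot\rho=\tfrac1{c_\psi}\G(-\overline c(\rho)-\|\mathtt f\|_\infty)$ with the same datum; using continuous dependence on the datum to let $\e\to0$, and $\{u(\cdot,t)\ge\lambda+\delta\}\subseteq\{u(\cdot,t)>\lambda\}$, we obtain $\overline{B(x_0,\rho)}\subseteq\{u(\cdot,t)>\lambda\}$ for every $\rho<\rho_1(t-t_0)$ with now $\rho_1(0)=r_0$. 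Finally $\dot r=\hat\kappa(r)\le\tfrac1{c_\psi}\G(-\overline c(r)-\|\mathtt f\|_\infty)$, so ODE comparison (with the maximal solution, if uniqueness fails) gives $r\le\rho_1$ on $[0,t-t_0]$, whence $B(x_0,r(t-t_0))\subseteq\{u(\cdot,t)>\lambda\}$; the case $t=t_0+T^*(r_0)$ is trivial since $B(x_0,r(T^*(r_0)))=B(x_0,0)=\emptyset$. The delicate point is precisely Steps~2--3: the discrete ball iteration can be run uniformly in $h$ only while the radii stay bounded below, which forces one to work strictly before the extinction time and only afterwards let $t\uparrow t_0+T^*(r_0)$; one must then carefully interchange the three limits $h\to0$, $\sigma\to1^+$, $\e\to0$, tracking throughout the distinction between strict and non-strict superlevel sets (Lemma~\ref{lemma 6.12 Nonlocal}) and the possible non-uniqueness of the limiting ODEs.
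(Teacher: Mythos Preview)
Your proposal is correct and follows essentially the same approach as the paper: pass to the discrete scheme $u_h$ by uniform convergence, iterate the ball estimate of Lemma~\ref{lemma estimates on balls} along the same Euler recursion $\rho_{j+1}=\rho_j+\tfrac{h}{c_\psi}\G(-\overline c(\rho_j/\sigma)-\|f\|_\infty)$, compare with the ODE, and send $h\to0$, $\sigma\to1$, and the initial radius back to $r_0$. The only cosmetic difference is that the paper bounds the Euler iterates from below directly by the ODE solution $r_\sigma$ of $\dot r_\sigma=\hat\kappa(r_\sigma/\sigma)$ via a one-line induction exploiting the monotonicity of $\hat\kappa$, rather than first invoking convergence of the Euler scheme and then a separate ODE comparison as you do.
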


\begin{proof}
    The result in the case $a<+\infty$ follows directly by Lemma \ref{lemma existence}, so we assume $a=+\infty.$  We consider \textit{wlog} $\{ u(\cdot,t_0)>\lambda \}$ bounded, as the other case is analogous. For a fixed $R_0<r_0$, taking $h(R_0)$ small enough, we can ensure that  $B(x_0,R_0)\subseteq \{ u_h(\cdot,t_0)>\lambda \}.$ 
    We then fix $\sigma>1$ and define recursively the radii $R_n$ by  
    $$R_{n+1}=R_n+\tfrac h{c_\psi}\G\left(-\overline{c}(R_n/\sigma)-\|f\|_\infty \right).$$ 
    By Lemmas \ref{comparison principle}, \ref{lemma estimates on balls} and \ref{lemma 6.12 Nonlocal}, we see that $B(x_0,R_{[(t-t_0)/h]+1})\subseteq \{ u(\cdot,t)>\lambda \}$ for every $t\ge t_0$ such that $R_{[(t-t_0)/h]+1}>0.$ Let then $r_\sigma$ be the unique solution to the ODE
    \begin{equation}\label{ODE modif}
    \begin{cases}
        \dot r_\sigma(t)=\hat \kappa(r_\sigma(t)/\sigma)\\
        r_\sigma(0)=R_0.
    \end{cases}
    \end{equation}
    Employing the monotonicity of $\hat \kappa$, if $r_\sigma(t)\le R_n,$ then 
    \begin{align*}
        r_\sigma((n+1)h)&\le R_n+\int_{nh}^{(n-1)h}\hat\kappa\left( \frac{r_\sigma(s)}\sigma \right)\ud s\le  R_n+\int_{nh}^{(n-1)h}\hat\kappa\left( \frac{R_n}\sigma \right)\ud s\\
        &\le  R_n+\int_{nh}^{(n-1)h} \tfrac 1{c_\psi}\G\left( -\overline{c}(R_n/\sigma) -\|f\|_\infty\right)\ud s=R_{n+1}.
    \end{align*}
    Therefore, $B(x_0,r_\sigma(h[(t-t_0)/h]+h)\subseteq \{ u_h(\cdot,t)>\lambda \}$ for $t\ge t_0$ as long as the radius is positive. We conclude sending $h\to 0$, then $R_0\to r_0$ and $\sigma\to 1$.     
\end{proof}

We are now in the position to prove our main result.
 
\begin{proof}[Proof of Theorem \ref{teo sol viscosa}]
Consider $u$ as defined in \eqref{existence by approx}: we show that $u$ is a subsolution, as proving that it is a supersolution is analogous. Let $\eta (x,t)$ be an admissible test function in $\bar z:=(\bar x, \bar t)$ and assume that $(\bar x, \bar t)$ is a strict maximum point for $u-\eta $. Assume furthermore that $u-\eta =0$ in such  point. 

\textbf{Case 1: }We assume that $\nabla\eta  (\bar z)\neq 0$. Firstly, in the case $a<+\infty$ we remark that if $\bd_t \eta/\psi(\nabla \eta(\hat z))\le -a$, then \eqref{eq viscosa} is trivially satisfied, thus we can assume \textit{wlog} that 
\begin{equation}\label{rmk test}
    \frac{\bd_t\eta (\bar z)}{\psi(\nabla \eta(\hat z))}>-a.
\end{equation}
By classical arguments (recalled in \cite{ChaDegMor}) we can assume that each  function $u_{h_k}-\eta$ assumes a local supremum in $B_\rho(\bar z)$ at a point $z_{h_k}=:(x_k,t_k)$ and that $u_{h_k}(z_{h_k})\to u(\bar z)$  as $k\to \infty.$ Moreover, we can assume that $\nabla \eta (z_k)\neq 0$ for  $k$ large enough.

\noindent\textbf{Step 1:} We define a suitable competitor for the minimality of the level sets of $u_h$. By the previous remarks we have that 
\begin{equation}\label{eq 6.19 Nonlocal}
    u_h(x,t)\le \eta (x,t)+c_k
\end{equation}
where $c_k:= u_{h_k}( x_k, t_k)- \eta ( x_k, t_k)  $, with equality if $(x,t)=( x_k,  t_k)$. Let $\sigma>0$ and set 
\begin{equation*}
    \eta _{h_k}^\sigma(x):= \eta (x, t_k)+c_k+\frac{\sigma}2 |x-x_k|^2.
\end{equation*}
Then, for all $x\in\R^N$, 
\[ u_{h_k}(x,t_k)\le \eta ^\sigma_{h_k}(x) \] 
with equality if and only if $x=x_k$. We set $l_k=u_{h_k}(x_k,t_k)=\eta ^\sigma_{h_k}(x_k)$. We fix $\varepsilon>0$, to be chosen later, and define $E_\e^k:=\left\lbrace u_{h_k}(\cdot,t_k)> l_k-\varepsilon  \right\rbrace=T_{h_k,t_k-h_k} \left\lbrace u_{h_k}(\cdot,t_k-h_k) > l_k-\varepsilon  \right\rbrace$\footnote{The choice of working with the open superlevel sets is motivated by our need to employ \eqref{funct coroll}} and
\begin{equation}
    W_\varepsilon^k:= E_\e^k\setminus\left\lbrace \eta ^\sigma_{h_k}(\cdot) >  l_k+\e \right\rbrace  .
\end{equation}
Assume  that $E_\e^k$ is bounded and let us define $E_{\e,n}^{k}$ as the sets constructed by Corollary \ref{corollary approx} where $\left\lbrace u_{h_k}(\cdot,t_k-h_k)\ > l_k-\varepsilon  \right\rbrace,E_\e^k$ substitute  $E,T_{h,t}^-E$ respectively.   We thus have that $E_{\e,n}^{k}\nearrow E_\e^k$  as $n\to \infty$ and that each $E_{\e,n}^{k}$ is the minimal minimizer of a problem  in the form \eqref{pb equi inf}. We define 
\begin{equation}
    W_{\e,n}^k:= E_{\e,n}^{k} \setminus\left\lbrace \eta ^\sigma_{h_k}(\cdot)> l_k+\e,\right\rbrace.
\end{equation}
It is easy to see that, along any subsequence $n(\e)\to \infty$ as $\e \to 0$, it holds $W_{\e,n(\e)}^k\to \{x \}$ as $\e\to 0$ in the Hausdorff sense. Furthermore, we check that  for every $\e,k>0$ there exists $n(\e,k)$ large enough such that $|W_{\e,n}^k|>0$ for all $n\ge n(\e,k)$. Indeed, by the continuity of $\eta^\sigma$ and since $|\nabla \eta(\bar z)|\neq 0$ there exists a positive radius $r$ such that 
\[ ( B(x_k,r)\cap E_\e^k )\subseteq W_\e^k.\]
Since $x_k\in E_\e^k$ and it is an open set, it holds $|W_\e^k|>0$. Recalling that $E_{\e,n}^{k}\to E_\e^k$ in $L^1$, we conclude that $|W_{\e,n}^k|>0$ for all $n=n(\e,k)$ large enough. Note also that, for every fixed $k$, $n(\e,k)\to \infty$ as $\e\to 0$.

By minimality of $E_{\e,n}^{k}$ we have 
\begin{align}
    &\P (E_{\e,n}^{k}) +\int_{E_{\e,n}^{k}} g\left(\tfrac1{h_k} \sd_{\left\lbrace u_{h_k}(\cdot,t_k-h_k) > l_k-\varepsilon  \right\rbrace}(x)\right)\vee (-n) \ud x -f\left(\left[\tfrac t{h_k}\right]h_k\right)|W_{\e,n}^{k}|\nonumber \\
    &\le \P \left(E_{\e,n}^{k} \cap \{ \eta ^\sigma_{h_k}> l_k \}\right) +  \int_{E_{\e,n}^{k} \cap \{ \eta ^\sigma_{h_k}> l_k \} } g\left(\tfrac1{h_k}\sd_{\left\lbrace u_{h_k}(\cdot,t_k-h_k) > l_k-\varepsilon  \right\rbrace}(x)\right)\vee (-n) \ud x. \label{eq 6.21 Nonlocal}
\end{align}
Adding to both sides $\P \left(\{ \eta ^\sigma_{h_k}> l_k \}\cup E_{\e,n}^{k} \right)$ and using the submodularity of $J$, we obtain 
\begin{align*}
    \P (\{ \eta ^\sigma_{h_k}> l_k+\varepsilon \}\cup W_{\e,n}^k) -  \P (\{ \eta ^\sigma_{h_k}> l_k+\varepsilon \})	-f\left(\left[\tfrac t{h_k}\right]h_k\right)|W_{\e,n}^{k}|&\\
    + \int_{ W_{\e,n}^k} g\left(\tfrac1{h_k}\sd_{\left\lbrace u_{h_k}(\cdot,t_k-h_k) > l_k-\varepsilon  \right\rbrace}(x)\right)\vee (-n) \ud x&\le 0.
\end{align*}
Equation \eqref{eq 6.19 Nonlocal} implies  $ \{u_{h_k}(\cdot,t_k-h_k)>  l_k-\varepsilon \}  \subseteq  \{ \eta (\cdot,t_k-h_k)> l_k-c_k-\varepsilon \} $, therefore by monotonicity we get
\begin{equation}
    \begin{split}
        \P (\{ \eta ^\sigma_{h_k}> l_k+\varepsilon \}\cup W_{\e,n}^k) -  \P (\{ \eta ^\sigma_{h_k}> l_k+\varepsilon \})	-f\left(\left[\tfrac t{h_k}\right]h_k\right)|W_{\e,n}^{k}|&\\
        + \int_{ W_{\e,n}^k} g\left(\tfrac1{h_k} \sd_{\{\eta (\cdot,t_k-h_k)> l_k-c_k-\varepsilon \}}(x)\right)\vee (-n) \ud x&\le 0. \label{eq 6.22 Nonlocal}
    \end{split}
\end{equation}
If instead $E_\e^k$ is an unbounded set with compact boundary, we employ \eqref{pb equi unbounded} instead of \eqref{eq 6.21 Nonlocal} to obtain \eqref{eq 6.22 Nonlocal} in the computations above. See \cite{ChaMorPon15,ChaDegMor} for details.

\noindent\textbf{Step 2:} We now estimate the terms appearing in \eqref{eq 6.22 Nonlocal}. We start with the first two   terms  $\P (\{ \eta ^\sigma_{h_k}> l_k+\varepsilon \}\cup W_{\e,n}^k) - \P (\{ \eta ^\sigma_{h_k}> l_k+\varepsilon \})$. By definition of variational curvature, we get 
\begin{equation}\label{eq 6.30 Nonlocal}
       \P (\{ \eta ^\sigma_{h_k}> l_k+\e\}\cup W_{\e,n}^k) - \P (\{ \eta ^\sigma_{h_k}> l_k+\e \} ) \ge |W_{\e,n}^k|\left(  \kappa(x_k,\{ \eta ^\sigma_{h_k}> l_k+\e \}) + o_\e(1) \right),
\end{equation}
The last term in \eqref{eq 6.22 Nonlocal} can be treated as follows. For any $z\in W_\e$, we have
\begin{equation}
    \eta (z,t_k)+c_k+\frac \sigma 2 |z-x_k|^2\le l_k +\varepsilon \label{eq 6.23 Nonlocal}.
\end{equation}
Since, in turn, $\eta (z,t_k)+c_k> l_k-\varepsilon$ it follows that  $ \sigma  |z-x_k|^2<4\varepsilon$ and thus, for $\varepsilon$ small enough,
\begin{equation}
    W_\e^k\subseteq B_{c\sqrt \varepsilon}(x_k). 
\end{equation}
 Therefore, by Hausdorff convergence it holds that for every $\e,k>0$ there exists $n=n(\e,k)$ large enough such that  
 \begin{equation}\label{eq 6.24 Nonlocal}
    W_{\e,n}^k\subseteq B_{2c\sqrt \varepsilon}(x_k). 
\end{equation}
On the other hand, by a Taylor expansion, for every $z\in W_{\e,n}^k$ we have
\begin{equation}\label{eq 6.25 Nonlocal}
    \eta (z,t_k-h_k)=\eta (z,t_k)-h_k\bd_t \eta (z,t_k)+h^2_k\int_0^1 (1-s)\bd^2_{tt}\eta (z,t_k-s h_k)\ud s.
\end{equation}
Then, we consider $y\in\{ \eta (\cdot,t_k-h_k)(y)=l_k-c_k-\varepsilon \}$ being a point of minimal $\psi $-distance from $z$, that is, $ {\psi^\circ(z-y)}=|\sd_{\{ \eta (\cdot,t_k-h_k)(y)>l_k-c_k-\varepsilon \}}(z)|$. One can prove (see \cite[eq. (4.26)]{ChaDegMor}  for details) that
\begin{equation}\label{decay y}
    |z-y|=O(h_k).
\end{equation}
Moreover, it holds (see \cite[eq (6.26)]{ChaMorPon15} for details)
\[ (z-y)\cdot \frac{\nabla \eta(y,t_k-h_k)}{|\nabla \eta(y,t_k-h_k)|}=\pm\psi\left( \frac{\nabla \eta(y,t_k-h_k)}{|\nabla \eta(y,t_k-h_k)|}\right)\dist^\psi_{\{ \eta (\cdot,t_k-h_k)(y)=l_k-c_k-\varepsilon \}}(z),\]
with a \virg{+} if $z\in \{ \eta (\cdot,t_k-h_k)(y)\le l_k-c_k-\varepsilon \}$ and a \virg{-} otherwise. We get
\begin{align}
    \eta (z,t_k-h_k)&=\eta (y,t_k-h_k)+(z-y)\cdot \nabla \eta(y,t_k-h_k)\nonumber\\
    &\ +\int_0^1 (1-s)\left( \nabla^2 \eta(y+s(z-y),t_k-h_k)(z-y)\right)\cdot (z-y) \ud s\nonumber\\
    &=l_k-c_k-\e- \sd_{\{ \eta (\cdot,t_k-h_k)(y)=l_k-c_k-\varepsilon \}}(z)\psi(\nabla \eta(y,t_k-h_k))\nonumber\\
    &\ +\int_0^1 (1-s)\left( \nabla^2 \eta(y+s(z-y),t_k-h_k)(z-y)\right)\cdot (z-y) \ud s\label{eq 6.27 Nonlocal}.
\end{align}   
Note that, in view of \eqref{eq 6.23 Nonlocal}  it holds $|\eta (z,t_k)-\eta (y,t_k)|\le c\varepsilon+ch_k=O(h_k)$, provided $\varepsilon \ll h_k $ and small enough. Thus, using also \eqref{eq 6.24 Nonlocal},\eqref{decay y} we deduce 
\begin{align}
    \frac 1{h_k}  \sd_{\{ \eta (\cdot,t_k-h_k)>l_k-c_k-\varepsilon \}}(z) &\ge \frac {\bd_t \eta (z,t_k)-\frac{2\varepsilon}{h_k}-O(h_k)-O_{h_k}(1)   }{\psi(\nabla\eta (y,t_k-h_k))}   \nonumber\\
    & = \frac {\bd_t \eta (x_k,t_k)+O(\sqrt\varepsilon)-\frac{2\varepsilon}{h_k}-O(h_k)-O_{h_k}(1)   }{\psi(\nabla\eta (x_k,t_k-h_k))+O(\sqrt\varepsilon)+O(h_k)}, \nonumber
\end{align}
and we apply $g$ to both sides to conclude
\begin{equation}\label{eq 6.29 Nonlocal}
     g\left(  \tfrac1{h_k} \sd_{\{ \eta (\cdot,t_k-h_k)>l_k-c_k-\varepsilon \}}(z)\right)\ge g\left(  \frac {\bd_t \eta (x_k,t_k)-O_{h_k}(1)   }{\psi(\nabla\eta (x_k,t_k-h_k))+O(h_k)}  \right)
\end{equation}
\noindent\textbf{Step 4:} We conclude the proof. Combining \eqref{eq 6.22 Nonlocal}, \eqref{eq 6.30 Nonlocal}  and \eqref{eq 6.29 Nonlocal}, we arrive at 
\begin{multline}\label{end eq}
     0\ge |W_{\e,n}^k|    \Big( \kappa(x_k,\{ \eta ^\sigma_{h_k}> l_k+\e \}) + o_\e(1)  -  f\left(\left[\tfrac t{h_k}\right]h_k\right)+  \\
     g\left(  \frac {\bd_t \eta (x_k,t_k)-O_{h_k}(1)   }{\psi(\nabla\eta (x_k,t_k-h_k))+O(h_k)}  \right)\vee (-n)  \Big).
\end{multline}
Choosing $n=n(\e,k)$, we can divide by $|W_{\e,n(\e,k)}^k|>0$ and  apply $\G$ to both sides to get
\begin{multline*}
    \G\left( - \kappa(x_k,\{ \eta ^\sigma_{h_k}> l_k+\e \}) + o_\e(1) + f\left(\left[\tfrac t{h_k}\right]h_k\right)\right) \ge \\ 
     \G\left(   g\left(\frac {\bd_t \eta (x_k,t_k)-O_{h_k}(1)   }{\psi(\nabla\eta (x_k,t_k-h_k))+O(h_k)}\right) \vee (-n(\e,k))\right). 
\end{multline*}
Let us fix $k>0$ and send $\e\to 0$ (thus also $n(\e,k)\to 0$). Thanks to the continuity of $\G$ and recalling also that $W_{\e,n(\e,k)}^k\to \{x\}$ as $\e\to 0$, we let $\e\to 0$ and arrive at
 \[ \G\left( -\kappa(x_k,\{ \eta ^\sigma_{h_k}> l_k+\e \}) +f\left(\left[\tfrac t{h_k}\right]h_k\right) \right)\ge \frac{\bd_t \eta (x_k,t_k)-O_{h_k}(1)}{\psi(\nabla\eta (x_k,t_k))+O(h_k)}, \]
which finally implies the thesis by letting simultaneously $\sigma\to 0$ and $k\to +\infty$. 

\textbf{Case 2:}  We assume $\nabla\eta (\bar x, \bar t)=0$ and prove that $\bd_t \eta (\bar x, \bar t)\le 0$. The proof follows the line of the one in \cite{ChaMorPon15}.  We  focus on the case $a=+\infty$, the other being simpler.

Since $\nabla \eta (\bar z)=0,$ there exist $\ell\in \mathcal L$ and $\omega\in C^\infty(\R)$ with $\omega'(0)=0$ such that 
\[|\eta (x,t)-\eta (\bar z)-\bd_t\eta (\bar z)(t-\bar t)|\le   \ell(|x-\bar x|)+\omega(|t-\bar t|) \]
thus, we can define
\begin{align*}
    &\tilde\eta (x,t)=\bd_t\eta (\bar z)(t-\bar t) + 2 \ell(|x-\bar x|)+2\omega(|t-\bar t|) \\
    &\tilde\eta _k(x,t)=\tilde\eta (x,t)+\frac 1{k(\bar t - t)}.
\end{align*}
We remark that $u-\tilde\eta $ achieves a strict maximum in $\bar z$ and  the local maxima of $u-\tilde\eta _k$ in $\R^N \times [0,\bar t]$ are in points $(x_k,t_k)\to\bar z$ as $k\to \infty$, with $t_n\le \bar t$. From now on, the only difference from \cite{ChaMorPon15} is in the case $x_k= \bar x$ for an (unrelabeled) subsequence. We thus assume $x_k=\bar x$ for all $ k>0$ and define $b_k=\bar t-t_k>0$ and the radii
 \begin{equation*}
    r_k:=\ell^{-1}(a_k b_k),
\end{equation*}
where $a_k\to 0$ must be chosen such that the extinction time for the solution of \eqref{ODE} satisfies $T^*(r_k)\ge \bar t-t_k$, for $k$ large enough.  To show that such a choice for $a_k$ is possible, we set 
\begin{equation}
    \beta(t)=\sup_{0\le s\le t}\hat\kappa(\ell^{-1}(s))\ell'(\ell^{-1}(s)),
\end{equation}
where $\hat \kappa$ is as in \eqref{ODE}. Note that by Definition \ref{family F} it holds $\beta(t)\le \hat\kappa(t)$ for $t$ small, $\beta$ is non decreasing  in $t$ and $g(t)\to 0$ as $t\to 0$. We then have
\begin{align}
    \frac{T^*(r_k)}{b_k}&\ge \frac 1{b_k}\int_{r_k/2}^{r_k}\frac 1{\hat \kappa(s)}\ud s = \frac 1{b_k}\int_{\ell^{-1}(a_k b_k/2)}^{\ell^{-1}(a_k b_k)}\frac 1{\hat \kappa(s)}\ud s\nonumber\\
    &= \frac {a_k}2\fint_{a_k b_k/2}^{a_k b_k}\frac 1{\hat \kappa(\ell^{-1}(r))\ell'(\ell^{-1}(r))}\ud r\ge \frac {a_k}2 \frac1{\beta(b_k)}=2 \label{eq 6.32 Nonlocal},
\end{align}
where in the last equality we chose $a_k:=4\beta(b_k)$ which tends to 0 as $k\to \infty$.   

By definition of $\tilde \eta_k$ it holds 
\begin{align*}
    B(\bar x,r_k)&\subseteq  \{ \tilde\eta _k(\cdot,t_k)\le   \tilde\eta _k(\bar x,t_k)+ 2\ell(r_k) \}\\&\subseteq  \{ u(\cdot,t_k)\le   u(\bar x,t_k)+ 2\ell(r_k)  \},
\end{align*}
by maximality of $u-\tilde\eta _k$  at $z_k$ and since $u(z_k)=\tilde \eta_k(z_k)$. Since the balls $B(\cdot,r_k)$ are not vanishing, by Lemma \ref{lemma evol balls in levelsets} we have
\begin{equation}\label{eq bar x}
     \bar x\in \{ u(\cdot,\bar t)\le   u(\bar x,t_k)+2\ell(r_k) \}. 
\end{equation}
Finally, using again the maximality of $u-\eta $ at $\bar z$, the choice of $r_k$ and  \eqref{eq bar x}, we obtain
\[  \frac{\eta (\bar z)-\eta (\bar x,t_k)}{\bar t-t_k} = \frac{\eta (\bar z)-\eta (\bar x, t_k)}{b_k}\le   \frac{u(\bar z)-u(\bar x,t_k)}{b_k}\le \frac{2\ell(r_k)}{b_k}=2a_k. \]
Passing to the limit $k\to\infty$, we conclude that $\bd_t \eta (\bar z)\le 0$.
\end{proof}

\section{Uniqueness of Viscosity Solutions}\label{sect:uniqueness}

The viscosity theory developed in \cite{ChaMorPon15}  shows {uniqueness} for viscosity solutions to the  Cauchy problem 
\[
\begin{cases}
    \bd_t u(x,t) +|\nabla u(x,t)|\,\kappa(x,\{ u(\cdot,t)\ge u(x,t) \})=0\\
    u(\cdot,0)=u_0,
\end{cases}
\]
which corresponds to    \eqref{eq:level_set_nonlocal} for $\G=id, \psi=|\cdot| $ and $ \mathtt f=0$, under some additional assumptions on the curvature considered. 
In particular, the curvature $\kappa$ must either be of \textit{first order} or satisfy a uniform continuity property (see conditions (FO) and (C') below). Given that the nonlinearity $\G$ is continuous, it follows that if $\kappa$ satisfies the first-order condition, then $-\G(-\kappa)$ also satisfies it. Similarly, assuming $\G$ is uniformly continuous, we deduce that if $\kappa$ satisfies the uniform continuity condition, so does $-\G(-\kappa)$. Consequently,  {uniqueness for continuous viscosity solutions to}
\begin{equation*}
\begin{cases}
    \bd_t u(x,t) -|\nabla u(x,t)|\G(-\kappa(x,\{ u(\cdot,t)\ge u(x,t) \}))=0\\
    u(\cdot,0)=u_0
\end{cases}
\end{equation*}
 {can be deduced from \cite[Theorem 3.5]{ChaMorPon15} (assuming (FO) below) and \cite[Theorem 3.8]{ChaMorPon15} (assuming (C') below and   $\G$    uniformly continuous).  Note however that the curvature $\G(-\kappa)$ is in general not a variational one, thus  the convergence of the minimizing movements scheme does not follow from the results of \cite{ChaMorPon15}. This is instead ensured by Theorem \ref{teo sol viscosa}.}

In this section we detail how one can generalize the results of \cite{ChaMorPon15} to show uniqueness of viscosity solutions to \eqref{eq:level_set_nonlocal}, under some additional assumptions on $\kappa$  (but, quite surprisingly,  not on $\G$). In particular, the major difficulty comes from the presence of a time-dependent term in the operator involving the curvature, which can not be decoupled straightforwardly (because of the presence of the nonlinearity $\G$), see \eqref{eq:level_set_nonlocal}.

\subsection{Setup} We start recalling  notation and some results from \cite{ChaMorPon15}. We start introducing the notion of super/subjets.

\begin{defin}
Let $E\subseteq \R^N$, $x_0\in\partial E$, $p\in\R^N$, and  $X\in Sym(N)$. We say $(p,X)\in \mathcal J^{2,+}_E(x_0)$, the superjet of $E$ at $x_0$, if for every $\delta>0$ there exists a neighborhood $U_\delta$ of $x_0$ such that, for every $x\in E\cap U_\delta$   it holds
\begin{equation}
 (x-x_0)\cdot  p  + \frac12 (X+\delta  I )  (x-x_0)\cdot (x-x_0)  \ge 0.
\end{equation}
Moreover, we say that $(p,X)$ is in the subjet $ \mathcal J^{2,-}_E(x_0)$ of $E$ at $x_0$  if
$(-p,-X)$  is in the superjet $\Jet^{2,+}_{\R^N \setminus E}(x_0)$ of $\R^N \setminus E$ at $x_0$. Finally, 
we say that $(p,X)$ is in the jet $\Jet^{2}_E(x_0)$ of $E$ at $x_0$  if
$(p,X)\in  \Jet^{2,+}_{E}(x_0) \cap \Jet^{2,-}_{E}(x_0)$.
\end{defin}
Analogously, one introduces the notion of parabolic super/subjet. 
\begin{defin}
Let $u:\R^N \times (0, T) \to \R$ be upper semicontinuous
  at $(x,t)$. We say that $(a,p,X)\in \R\times\R^N\times Sym(N)$ is in the parabolic superjet 
$ \PJet^{2,+}u(x,t)$ of $u$ at $(x,t)$,  if  
$$
u( y,s)\leq u(x,t) +  a(s-t)\,+\, p\cdot(y-x)\,+\, \tfrac12(X(y-x))\cdot(y-x)
\,+\,o(|t-s|+|x-y|^2)
$$
 for $(y,s)$ in a neighborhood of $(x,t)$.  If $u$ is lower semicontinuous at $(x,t)$ we can define the parabolic subjet
  $ \PJet^{2,-}u(x,t)$ of $u$ at $(x,t)$ as  $ \PJet^{2,-}u(x,t):=- \PJet^{2,+}(-u)(x,t)$.
\end{defin}

The notion of semijet induces a notion of convergence.

\begin{defin}
Let $E_n\subseteq \R^N$ and $x_0\in\partial E_n$. We say that  $(p_n,X_n)$ are in the superjet $\Jet^{2,+}_{E_n}(x_0)$ uniformly, if 
for every positive $\delta >0$ there exists a neighborhood $U_\delta$ of $x_0$ (independent of $n$) such that, { for all $n\in N$,} 
\begin{equation}%\label{superjeteqins}
  (x-x_0)\cdot p_n + \frac12 (X_n+\delta  I )  (x-x_0)\cdot (x-x_0)  \ge 0 \text{ for every }  x\in E_n \cap U_\delta.
\end{equation}
We say that $(p_n,X_n,E_n)$ converge to $(p,X,E)$  with uniform superjet at $x_0$ if 
$\overline E_n\to \overline E$ in the Hausdorff sense,
the $(p_n,X_n)$'s are in the superjet $\Jet^{2,+}_{E_n}(x_0)$ uniformly and $(p_n,X_n)\to (p, X)$ as $n\to\infty$. 
{Moreover, we say that $(p_n,X_n,E_n)$ converge to $(p,X,E)$  with uniform subjet at $x_0$ if  $(-p_n,-X_n,E^c_n)$ converge to $(-p,-X,E^c)$  with uniform superjet.}
\end{defin}

One can then introduce semicontinuous extensions of $\kappa$.
\begin{defin}
For every $F\subseteq \R^N$ with compact boundary and $(p,X)\in \Jet^{2,+}_{ F}(x)$, we define 
\begin{equation*}\label{defkappal}
\kappa_*(x,p,X,F) :=\ \sup\left\{
\kappa(x,E)\,:\, E \in C^2\,, E\supseteq F\,,
(p,X)\in \Jet^{2,-}_{{E}}(x)
\right\}
\end{equation*}
Analogously, for any $(p,X)\in \Jet^{2,-}_{ F}(x)$ we set
\begin{equation*}\label{defkappau}
\kappa^*(x,p,X,F)\ =\ \inf\left\{
\kappa(x,E)\,:\, E \in C^2\,, \mathring  E\subseteq  F\,,
(p,X)\in \Jet^{2,+}_{ {E}}(x)
\right\}.
\end{equation*}
\end{defin}

As shown in \cite[Lemma 2.8]{ChaMorPon15}, one can prove that $\kappa_*,\kappa^*$ are the l.s.c and the u.s.c. envelope of $\kappa$ with respect to the convergence with uniform superjet and subjet. Noting that 
\[
(-\G(-\kappa))_\ast =-\G(-\kappa_\ast), \quad (-\G(-\kappa))^\ast =-\G(-\kappa^\ast),
\]
one can also show the following equivalent characterization of viscosity solutions.

\begin{lemma}\label{lemma 2.15 nonlocal}
Let $u$ be a viscosity subsolution of~\eqref{eq:level_set_nonlocal} in the sense
of  Definition~\ref{def visco sol}. 
Then, for all $(x,t)$ in $\R^N\times (0,T)$, if $(a,p,X)\in \PJet^{2,+}u(x,t)$,
and $p\neq 0$, it holds
\begin{equation*}\label{eq:sujetsol}
a\,-\, \psi(|p|)\,\G(-\kappa_*\left(x,p,X,\{y: u(y,t)\ge u(x,t)\}+\mathtt f(t)\right) \ \le\ 0.
\end{equation*}
A similar statement holds for supersolutions, with $\PJet^{2,-},\kappa^*$ replacing $\PJet^{2,+},\kappa_*$.
\end{lemma}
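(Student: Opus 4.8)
The statement is the nonlinear analogue of \cite[Lemma~2.15]{ChaMorPon15}, and the plan is to reduce it to that result by exploiting the observation already recorded in the excerpt, namely that $-\G(-\kappa)$ is a generalized curvature whose lower semicontinuous envelope (with respect to convergence with uniform superjet) is $-\G(-\kappa_\ast)$. First I would unwind the two definitions that are genuinely at play: the viscosity subsolution condition of Definition~\ref{def visco sol}, which is phrased via admissible $C^2$ test functions touching $u$ from above, and the parabolic superjet $\PJet^{2,+}u(x,t)$, which encodes the same second-order information in pointwise-inequality form. The classical bridge between the two (see \cite{Gig-book}) is that whenever $(a,p,X)\in\PJet^{2,+}u(x,t)$ with $p\neq0$, one can construct a smooth test function $\eta$, admissible at $(x,t)$, with $u-\eta$ attaining a (strict, after a standard perturbation) maximum at $(x,t)$, $\partial_t\eta(x,t)=a$, $\nabla\eta(x,t)=p$ and $\nabla^2\eta(x,t)=X+\delta I$ for arbitrarily small $\delta>0$; moreover $\eta$ can be chosen so that the superlevel set $\{y:\eta(y,t)\ge\eta(x,t)\}$ contains $F:=\{y:u(y,t)\ge u(x,t)\}$ and lies in $C^2$ near $x$ with $(p,X+\delta I)\in\Jet^{2,-}$ of that set at $x$.

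The core of the argument is then the following chain. Applying the subsolution inequality \eqref{eq viscosa} to this $\eta$ gives
\[
a + \psi(\nabla\eta(x,t))\,\G\bigl(-\kappa(x,\{y:\eta(y,t)\ge\eta(x,t)\})+\mathtt f(t)\bigr)\le 0.
\]
Since $\psi(\nabla\eta(x,t))=\psi(p)$ and, by monotonicity (A) of $\kappa$ together with $F\subseteq\{\eta\ge\eta(x,t)\}$ and the fact that the latter is an admissible competitor in the supremum defining $\kappa_\ast$, we have $\kappa(x,\{\eta\ge\eta(x,t)\})\le\kappa_\ast(x,p,X+\delta I,F)$; using monotonicity of $\G$ this yields
\[
a + \psi(p)\,\G\bigl(-\kappa_\ast(x,p,X+\delta I,F)+\mathtt f(t)\bigr)\le 0,
\]
after rewriting $\psi(p)=\psi(|p|)$ in the normalization used in the statement. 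It remains to send $\delta\to 0$: here one invokes that $\delta\mapsto\kappa_\ast(x,p,X+\delta I,F)$ is monotone and that $\kappa_\ast$ is lower semicontinuous along this limit (\cite[Lemma~2.8]{ChaMorPon15}), so that $\liminf_{\delta\to0}\kappa_\ast(x,p,X+\delta I,F)\ge\kappa_\ast(x,p,X,F)$; combined with the monotonicity of $\G$ and its continuity one passes to the limit in the displayed inequality and obtains the claim. The supersolution case is identical after replacing $u$ by $-u$, $\PJet^{2,+}$ by $\PJet^{2,-}$ and $\kappa_\ast$ by $\kappa^\ast$, using the identity $(-\G(-\kappa))^\ast=-\G(-\kappa^\ast)$ recorded just above the statement.

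\textbf{Main obstacle.} The delicate point is not the algebra of $\G$ (monotonicity and continuity handle the nonlinearity transparently, and crucially no further assumption on $\G$ is needed, exactly as the excerpt advertises), but the construction of the admissible test function $\eta$ from the jet data in the degenerate-geometry situation: one must ensure that $\eta$ is genuinely admissible in the sense of Definition~\ref{def visco sol} — in particular that its spatial superlevel set through $(x,t)$ is a $C^2$ set realizing the prescribed subjet $(p,X+\delta I)$ and sandwiching $F$ — while also keeping the time derivative equal to $a$ up to a controlled error. This is standard when $p\neq0$ (the implicit function theorem gives the $C^2$ structure of the level set), which is why the statement restricts to $p\neq0$; the careful bookkeeping of the $\delta$-perturbation and the passage to the limit using lower semicontinuity of $\kappa_\ast$ is where one has to be attentive, and it is precisely the content of \cite[Lemma~2.8]{ChaMorPon15} that makes this step legitimate.
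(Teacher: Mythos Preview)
Your approach is correct and is essentially the one the paper intends: the paper does not give a detailed proof of this lemma, but merely records the identities $(-\G(-\kappa))_\ast=-\G(-\kappa_\ast)$, $(-\G(-\kappa))^\ast=-\G(-\kappa^\ast)$ and states that the result then follows as in \cite[Lemma~2.15]{ChaMorPon15}, which is exactly the reduction you carry out. Your write-up simply makes explicit the construction of the admissible test function from the superjet data and the $\delta\to 0$ passage via \cite[Lemma~2.8]{ChaMorPon15}, i.e.\ it spells out the content of the cited lemma adapted to the present nonlinearity, mobility and forcing.
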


\subsection{Proof of the Comparison Principle}
We now show how to adapt the proofs of Theorem~3.5 and Theorem~3.8 of  \cite{ChaMorPon15} to our setting. We will assume that $\kappa$ satisfies assumptions (A)-(D) and either:
% \begin{itemize}
%     \item[(UC)] $G$ is uniformly continuous, with modulus $\omega$,
% \end{itemize}
% and  
\begin{itemize}
    \item[(FO)] For any $\Sigma\in C^2, x\in\bd \Sigma$ and $(p,X),(q,Y)$ in  $\Jet^{2,+}_\Sigma(x),\Jet^{2,-}_\Sigma(x)$ respectively, then 
    \[  \kappa_*(x,p,X,\Sigma)=\kappa^*(x,q,Y,\Sigma) \]
    \item[(C')] Replace (C) by the following.  For every $R>0$ there exists a modulus of continuity $\omega_R$ with the following property. For all $\Sigma \in C^2$, $x\in\partial \Sigma$, such that $\Sigma$ has both an internal and external ball condition of radius $R$ at $x$, and for all $C^2-$diffeomorphism $\Phi:\R^N\to \R^N$, with $\Phi(y)=y$ for $|y-x|\ge 1$, we have  
\[
|\kappa(x,\Sigma) - \kappa(\Phi(x),\Phi(E))|\le \omega_R(\|\Phi - Id\|_{C^2}).
\]
\end{itemize}
If (FO) holds, we say that the curvature $\kappa$ is of first-order, since its relaxation depends only on  the first-order  {elliptic} jet. Otherwise, we say that the curvature $\kappa$ is of second-order. As detailed in \cite{ChaMorPon15}, an instance of first-order curvature is the one associated to the fractional perimeter, while the classical mean curvature is a second-order one satisfying (C').

Assuming (FO), the following comparison between $\kappa_*$ and $\kappa^*$ holds.
\begin{lemma}[Lemma 3.4 in \cite{ChaMorPon15}]\label{lemma 3.4 nonlocal}
    Assume (FO), and let $F,G$ be a closed and an open set respectively, with compact boundary and such that $F\subseteq G$. Let $x\in\bd F, y\in \bd G$ satisfy 
    \[ |x-y|=\dist(\bd F, \bd G). \]
    Then, for all $(p,X)\in \Jet^{2,+}_F(x)$ and $(p,Y)\in \Jet^{2,-}_G(x)$ with $p=x-y$, it holds 
    \[ \kappa_*(x,p,X,F)\ge \kappa^*(y,p,Y,G). \]
\end{lemma}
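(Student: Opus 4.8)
# Proof Proposal for Lemma \ref{lemma 3.4 nonlocal}

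The plan is to deduce the inequality from the definitions of $\kappa_*$ and $\kappa^*$ together with assumption (FO), exploiting the geometric fact that $F\subseteq G$ touch (through the segment $[x,y]$) at minimal distance. The key observation is that, since $p=x-y$ is the direction realizing $\dist(\bd F,\bd G)$, the two points $x\in\bd F$ and $y\in\bd G$ admit a common supporting structure: a ball of radius $|p|$ centered at a suitable point separates $F$ from $G^c$, and this gives compatibility of the competitor sets appearing in the suprema and infima defining $\kappa_*$ and $\kappa^*$.

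First I would unravel the definitions. By definition of $\kappa_*(x,p,X,F)$, there is a sequence of $C^2$-sets $E_n\supseteq F$ with $(p,X)\in\Jet^{2,-}_{E_n}(x)$ and $\kappa(x,E_n)\to \kappa_*(x,p,X,F)$ from above (i.e. one can take $\kappa(x,E_n)$ arbitrarily close to the supremum). Similarly, $\kappa^*(y,p,Y,G)$ is approximated from below by $\kappa(y,\tilde E_m)$ with $\tilde E_m$ of class $C^2$, $\mathring{\tilde E}_m\subseteq G$, and $(p,Y)\in\Jet^{2,+}_{\tilde E_m}(y)$. The goal is to produce, for suitable choices of these competitors, a single $C^2$-set (or a comparison between two such sets meeting at a point) to which the monotonicity axiom (A) and assumption (FO) apply. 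Concretely, using that $x-y=p$ realizes the minimal distance, one translates $\tilde E_m$ by the vector $p=x-y$; then $\tilde E_m + p$ has $y+p = x$ on its boundary, lies (in a neighborhood of $x$) on the far side of the separating hyperplane through $x$ with normal $p$, and by translation invariance (B) satisfies $\kappa(x,\tilde E_m+p)=\kappa(y,\tilde E_m)$.

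The heart of the argument is then the following inclusion-at-a-point: near $x$, the set $E_n$ (which contains $F$ and has $(p,X)$ in its subjet) and the translated set $\tilde E_m+p$ (whose complement contains $G^c-$translated, with $(p,Y)$ in its superjet) can be arranged so that $\tilde E_m+p \subseteq E_n$ in a neighborhood of $x$, with $x$ on both boundaries. Here one uses that $X$ appears in a subjet and $Y$ in a superjet so the quadratic corrections have the right sign to make the inclusion hold locally (this is precisely the place where (FO) is invoked, since it tells us the relaxed curvature does not actually depend on $X$ resp. $Y$, only on $p$, so we have freedom to perturb the second-order parts). Then axiom (A) gives $\kappa(x,\tilde E_m+p)\ge \kappa(x,E_n)$, hence $\kappa(y,\tilde E_m)\ge \kappa(x,E_n)$; passing to the limit in $n$ and $m$ yields $\kappa^*(y,p,Y,G)\le \kappa_*(x,p,X,F)$, which is the claim. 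One should also verify the degenerate possibility $|p|=0$, i.e. $\bd F\cap\bd G\neq\emptyset$, separately — but then $x=y$ and the inequality reduces directly to $\kappa_*(x,p,X,F)\ge\kappa^*(x,p,Y,F)$, which follows from (FO) applied with $\Sigma$ any $C^2$-set interpolating between $F$ and $G$ near $x$ (or from the definitions and (A) directly).

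The main obstacle I expect is the careful construction of the two competitor sets so that they simultaneously (i) respect the inclusions $F\subseteq E_n$ and $\mathring{\tilde E}_m\subseteq G$, (ii) carry the prescribed jets at $x$ (resp. $y$), and (iii) satisfy the local inclusion $\tilde E_m+p\subseteq E_n$ near $x$ needed to invoke monotonicity. Getting all three compatible requires using the minimal-distance geometry (the segment $[x,y]$ is orthogonal to supporting structures at both endpoints) to align the normal directions, and then using (FO) to absorb any mismatch in the second-order terms by replacing $X,Y$ with more convenient symmetric matrices without changing the relaxed curvature values. Since this is exactly Lemma 3.4 of \cite{ChaMorPon15} and $\kappa$ here satisfies the same axioms, the argument carries over verbatim; the nonlinearity $\G$ plays no role in this lemma, as it concerns only the curvature and its relaxations.
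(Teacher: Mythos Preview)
First, note that the paper does not actually prove this lemma: it is stated as ``Lemma 3.4 in \cite{ChaMorPon15}'' and simply recalled for later use, with no proof given. Your closing remark that ``the argument carries over verbatim'' from \cite{ChaMorPon15} is therefore the correct disposition; the nonlinearity $\G$ indeed plays no role.

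That said, the sketch you give has a genuine logical gap that would make the argument fail as written. You claim the local inclusion $\tilde E_m + p \subseteq E_n$, apply monotonicity (A) to get $\kappa(x,\tilde E_m+p)\ge \kappa(x,E_n)$, and then translation invariance (B) to get $\kappa(y,\tilde E_m)\ge \kappa(x,E_n)$. But now passing to the limit goes the \emph{wrong way}: since $\kappa_*(x,p,X,F)=\sup_n \kappa(x,E_n)$ and $\kappa^*(y,p,Y,G)=\inf_m \kappa(y,\tilde E_m)$, the inequality $\kappa(y,\tilde E_m)\ge \kappa(x,E_n)$ for all $m,n$ yields
\[
\kappa^*(y,p,Y,G)=\inf_m \kappa(y,\tilde E_m)\ \ge\ \sup_n \kappa(x,E_n)=\kappa_*(x,p,X,F),
\]
which is the opposite of the desired conclusion. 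The inclusion you need for the correct direction is $E_n\subseteq \tilde E_m+p$ (so that (A) gives $\kappa(x,E_n)\ge \kappa(y,\tilde E_m)$), and then a single such pair already suffices to conclude $\kappa_*\ge\kappa^*$. However, this reversed inclusion is not automatic either: generic competitors $E_n\supseteq F$ can be arbitrarily large while $\tilde E_m\subseteq\overline G$ can be arbitrarily small. The actual argument in \cite{ChaMorPon15} uses (FO) more substantially than ``absorbing mismatch in second-order terms'': one exploits that under (FO) the relaxed curvatures are insensitive to local second-order modifications of the $C^2$ competitors, which is what allows one to manufacture a competitor on one side from a given competitor on the other while preserving the curvature value. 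Your heuristic picture (minimal-distance geometry aligning normals, separating balls) is right, but the combinatorics of sup/inf and the direction of the inclusion need to be fixed.
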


Assuming instead (C'), we recall the following results from \cite{ChaMorPon15}.
\begin{lemma}\label{lm:compkappa}
Assume (C'). Then,
given $R>0$, there exists a modulus of continuity $\omega_R$ with the following property.
For any $F\in C^2$, $x\in\partial F$, with internal and external ball condition at $x$ of radius $R$, any $(p,X)\in \Jet^{2,+}_F(x)$ with $p\neq 0$,
$|X|/|p|\le 1/R$,
and any $\Phi:\R^N\to \R^N$  diffeomorphism of class {$C^{2}$}, it holds
\begin{equation*}
|\kappa_*(x,p,X,F)- \kappa_*(\Phi(x),D (\psi \circ  \Phi^{-1})(\Phi(x)),
D^2 (\psi \circ \Phi^{-1})(\Phi(x)) ,\Phi(F))|\\
\le \omega_R(\|\Phi - Id\|_{C^{2}})
\end{equation*}
where $\psi(y) =  (y-x)\cdot p   +  \frac{1}{2} X(y-x)\cdot (y-x)$.
The same holds for  $\kappa^*$.
\end{lemma}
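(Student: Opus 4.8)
The plan is to adapt the argument of \cite{ChaMorPon15}. Recall that $\kappa_*(x,p,X,F)=\sup\{\kappa(x,E):E\in C^2,\ E\supseteq F,\ (p,X)\in\Jet^{2,-}_E(x)\}$, and likewise $\kappa_*(\Phi(x),\tilde p,\tilde X,\Phi(F))$ is such a supremum over $C^2$ sets $\tilde E\supseteq\Phi(F)$ with $(\tilde p,\tilde X)\in\Jet^{2,-}_{\tilde E}(\Phi(x))$, where $\tilde p=D(\psi\circ\Phi^{-1})(\Phi(x))$ and $\tilde X=D^2(\psi\circ\Phi^{-1})(\Phi(x))$. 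The first point I would establish is the covariance of second-order jets: a chain-rule computation shows that, for any $C^2$ set $E$ with $x\in\partial E$, $(p,X)\in\Jet^{2,\pm}_E(x)$ if and only if $(\tilde p,\tilde X)\in\Jet^{2,\pm}_{\Phi(E)}(\Phi(x))$ (this is precisely why $\tilde p,\tilde X$ are defined this way); in particular $(\tilde p,\tilde X)\in\Jet^{2,+}_{\Phi(F)}(\Phi(x))$, $\tilde p\neq 0$, and $E\mapsto\Phi(E)$ is a bijection between the two families of competitors. Since all quantities depend on $\Phi$ only through its $2$-jet at $x$, and since (C') only involves perturbations equal to the identity outside $B_1(x)$, I may assume from the start—after composing $\Phi$ with a fixed cut-off diffeomorphism, which alters $\tilde p,\tilde X$ not at all and $\|\Phi-\mathrm{Id}\|_{C^2}$ only by a universal constant—that $\Phi=\mathrm{Id}$ outside $B_1(x)$.

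The heart of the proof is a geometric reduction: it suffices to take the supremum defining $\kappa_*(x,p,X,F)$ over competitors $E$ which \emph{also} satisfy an interior and an exterior ball condition of radius $R/2$ at $x$. Given an arbitrary competitor $E$, I would produce such a set $\hat E$ with $F\subseteq\hat E\subseteq E$, coinciding with $E$ away from a small ball around $x$ and, near $x$, obtained by flooring the boundary of $E$ (written as a graph over the tangent hyperplane $p^{\perp}$) against the sphere of radius $R$ that is internally tangent at $x$ to the exterior ball of $F$. The constraint $(p,X)\in\Jet^{2,+}_F(x)$ together with the ball conditions on $F$ pins the second-order behaviour of $\partial F$ at $x$ within $[-\tfrac1R,\tfrac1R]$, while $(p,X)\in\Jet^{2,-}_E(x)$ with $|X|/|p|\le 1/R$ controls $\partial E$ from above; this guarantees enough room so that the floored graph stays between $\partial F$ and $\partial E$, still carries the subjet $(p,X)$ at $x$ (up to an $\varepsilon$ that is removed at the end), satisfies $|\hat g(w)|\le \tfrac1R|w|^2 + o(|w|^2)$ near $x$—hence the two ball conditions of radius $R/2$—and, being contained in $E$, has $\kappa(x,\hat E)\ge\kappa(x,E)$ by monotonicity (A), after a routine $C^{\infty}$-smoothing (controlled by (C)) making $\hat E$ genuinely $C^2$. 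The same reduction applies to the target side, using that $\Phi(F)$ has a two-sided ball condition of radius $\ge R-c\|\Phi-\mathrm{Id}\|_{C^2}$ at $\Phi(x)$.

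With the reduction in hand, the estimate is immediate: given $\varepsilon>0$ and a reduced competitor $E$ with $\kappa(x,E)\ge\kappa_*(x,p,X,F)-\varepsilon$, the set $\Phi(E)$ is a competitor for $\kappa_*(\Phi(x),\tilde p,\tilde X,\Phi(F))$ with a two-sided ball condition of radius $R/2$ at $\Phi(x)$, so (C') with radius $R/2$ yields $\kappa(\Phi(x),\Phi(E))\ge\kappa(x,E)-\omega_{R/2}(c\|\Phi-\mathrm{Id}\|_{C^2})$, whence $\kappa_*(\Phi(x),\tilde p,\tilde X,\Phi(F))\ge\kappa_*(x,p,X,F)-\varepsilon-\omega_{R/2}(c\|\Phi-\mathrm{Id}\|_{C^2})$. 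Letting $\varepsilon\to0$ and then repeating the argument with $\Phi^{-1}$ in place of $\Phi$ (exchanging the two sides, and using $\|\Phi^{-1}-\mathrm{Id}\|_{C^2}\le c\|\Phi-\mathrm{Id}\|_{C^2}$ when $\|\Phi-\mathrm{Id}\|_{C^2}$ is small) gives the two-sided bound with $\omega_R:=\omega_{R/2}(c\,\cdot)$. The statement for $\kappa^*$ follows identically, interchanging sub- and superjets (equivalently, passing to complements).

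The step I expect to be the real obstacle is the geometric reduction of the second paragraph: producing $\hat E$ as a bona fide $C^2$ competitor while simultaneously keeping $F\subseteq\hat E$, preserving the subjet $(p,X)$ at $x$ up to a vanishing error, creating the missing exterior ball condition at $x$, and only \emph{increasing} $\kappa(x,\cdot)$. The higher-order terms hidden in the definition of $\Jet^{2,\pm}$ can point the ``wrong way'' relative to the model paraboloid, so the flooring and smoothing must be carried out with some care, and the covariance of the second-order jet under $\Phi$—together with its $C^2$-continuity in $\Phi$—has to be tracked through all these modifications; this is precisely the delicate rounding analysis of \cite{ChaMorPon15}.
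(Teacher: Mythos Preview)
The paper does not give its own proof of this lemma: it is stated as a result ``recalled from \cite{ChaMorPon15}'' (see the sentence immediately preceding the statement), so there is nothing to compare against. Your sketch correctly identifies the source and outlines the argument of \cite{ChaMorPon15}---jet covariance under $\Phi$, reduction to competitors with a two-sided ball condition at $x$, then a direct application of (C')---which is exactly the route taken there; for the purposes of the present paper a citation suffices.
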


\begin{lemma}\label{lm:compC'}
Assume (C').
Let $x\in\R^N$,  $F,G\in C^2$  with $F\subset G\cup \{x\}$
and $\partial F \cap \partial G=\{x\}$.
Let $(p,X) \in \Jet^{2,+}_F(x)$,   
$(p,Y) \in \Jet^{2,-}_G(x)$, with $X\le Y$. Then,
$$
\kappa_*(x,p,X,F) \ge \kappa^*(x,p,Y,G). 
$$
\end{lemma}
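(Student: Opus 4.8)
This statement is taken, as stated, from \cite{ChaMorPon15}; I sketch the argument for completeness. The plan is to reduce the inequality to the existence of one interposed smooth set. Under the stated hypotheses I would produce $\Sigma\in C^2$ with $x\in\bd\Sigma$, $F\subseteq\Sigma$, $\mathring\Sigma\subseteq G$, $(p,X)\in\Jet^{2,-}_\Sigma(x)$ and $(p,Y)\in\Jet^{2,+}_\Sigma(x)$. Once this is available, $\Sigma$ is an admissible competitor both in the supremum defining $\kappa_*(x,p,X,F)$ and in the infimum defining $\kappa^*(x,p,Y,G)$, so that $\kappa_*(x,p,X,F)\ge\kappa(x,\Sigma)\ge\kappa^*(x,p,Y,G)$, which is exactly the claim; note that only the definitions of the relaxations enter, not the monotonicity axiom (A).

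To construct $\Sigma$ I would work locally near $x$ and glue. After a rigid motion take $x=0$ and $p$ a positive multiple of $e_N$; since $F,G\in C^2$ touch at $0$ with common inner normal $p/|p|$, near $0$ one writes $F=\{y_N\ge\phi_F(y')\}$ and $G=\{y_N\ge\phi_G(y')\}$ with $\phi_F,\phi_G\in C^2$ vanishing together with their gradients at $0$, and the hypotheses $F\subset G\cup\{0\}$, $\bd F\cap\bd G=\{0\}$ become $\phi_G\le\phi_F$ with equality only at $0$. Unwinding the definitions of super- and subjet, $(p,X)\in\Jet^{2,+}_F(0)$ amounts to $D^2\phi_F(0)\ge -X'$, $(p,Y)\in\Jet^{2,-}_G(0)$ to $D^2\phi_G(0)\le -Y'$, and the two conditions $(p,X)\in\Jet^{2,-}_\Sigma(0)$, $(p,Y)\in\Jet^{2,+}_\Sigma(0)$ together to $-Y'\le D^2\phi_\Sigma(0)\le -X'$ (with $X',Y'$ the restrictions to $e_N^{\perp}$). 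Since $X\le Y$ the chain $D^2\phi_G(0)\le -Y'\le -X'\le D^2\phi_F(0)$ holds, so the target interval for $D^2\phi_\Sigma(0)$ is nonempty; I would pick $Z'$ in $[-Y',-X']$ and look for $\phi_\Sigma\in C^2$ with $D^2\phi_\Sigma(0)=Z'$ and $\phi_G\le\phi_\Sigma\le\phi_F$. Generically $Z'$ may be chosen so that the quadratic $y'\mapsto\tfrac12\langle Z'y',y'\rangle$ lies strictly between $\phi_G$ and $\phi_F$ in a punctured neighbourhood of $0$, and then a cutoff plus a partition-of-unity gluing produces $\Sigma$ globally, using that $F\setminus\{x\}$ is compactly contained in the interior of $G$ (a consequence of $F\subset G\cup\{x\}$ and $\bd F\cap\bd G=\{x\}$) so that there is room for the global inclusions $F\subseteq\Sigma\subseteq\overline G$. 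In the degenerate configurations, where no quadratic separates strictly and $Z'$ is forced to equal one of $-X'$, $-Y'$, one instead takes $\bd\Sigma$ to coincide near $x$ with $\bd F$, respectively $\bd G$, which is still admissible thanks to the monotonicity of $\Jet^{2,+}$ in the matrix variable.

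The main obstacle is precisely this local construction at the tangency point $x$: prescribing the second-order behaviour of $\bd\Sigma$ so that both jet conditions hold while keeping $\bd\Sigma$ trapped between $\bd F$ and $\bd G$. The hypothesis $X\le Y$ is exactly what furnishes an interval of admissible Hessians, and the strict separation $\phi_G<\phi_F$ off the tangency point is what leaves room for the gluing; the degenerate cases must be isolated and treated by hand. The passage from the local graph to a global $C^2$ set is elementary but uses the compactness of $\bd F$ and $\bd G$; alternatively, the reduction to this model configuration can be routed through the uniform estimate of Lemma~\ref{lm:compkappa}, which is one place where hypothesis (C') is used.
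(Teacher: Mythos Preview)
The paper does not prove this lemma; it is quoted without proof from \cite{ChaMorPon15}. Your interposition strategy --- build a single $C^2$ set $\Sigma$ with $F\subseteq\Sigma$, $\mathring\Sigma\subseteq G$, $(p,X)\in\Jet^{2,-}_\Sigma(x)$ and $(p,Y)\in\Jet^{2,+}_\Sigma(x)$, and then read off the inequality directly from the definitions of $\kappa_*$ and $\kappa^*$ --- is precisely the argument used there, and your local graph reduction is the correct setup (with the cosmetic remark that the chain should read $D^2\phi_G(0)\le -Y'/|p|\le -X'/|p|\le D^2\phi_F(0)$; a factor $|p|$ is missing in your translation).

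The one place where your sketch is loose is the degenerate case. Taking $\partial\Sigma$ to coincide with $\partial F$ (or $\partial G$) near $x$ does \emph{not} by itself yield $(p,X)\in\Jet^{2,-}_F(x)$: from $(p,X)\in\Jet^{2,+}_F(x)$ you only get $D^2\phi_F(0)\ge -X'/|p|$, and the reverse inequality need not hold unless $X$ happens to be the exact second jet of $F$. Likewise, a bare quadratic with Hessian $Z'$ on the boundary of the order interval $[D^2\phi_G(0),D^2\phi_F(0)]$ need not be trapped between $\phi_G$ and $\phi_F$ near the origin (the $o(|y'|^2)$ remainder can have the wrong sign in the degenerate directions). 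The fix, as carried out in \cite{ChaMorPon15}, is to take $\phi_\Sigma$ as a genuine $C^2$ interpolation built from $\phi_F,\phi_G$ themselves rather than from a model quadratic, so that the sandwich $\phi_G\le\phi_\Sigma\le\phi_F$ is automatic and only the Hessian at $0$ has to be adjusted into $[-Y'/|p|,-X'/|p|]$. Once $\Sigma$ is in hand, neither (C') nor Lemma~\ref{lm:compkappa} is invoked: only the definitions of the relaxed curvatures are used.
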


Our main result of this section is a comparison principle for sub/supersolutions.
\begin{teo}\label{thm:comp}
Assume either (FO) or (C'). Let $u,v$ be u.s.c and l.s.c functions on $\R^N\times [0,T]$, both constant
outside  a compact set,
a subsolution and a supersolution to \eqref{eq:level_set_nonlocal}, respectively. If $u(\cdot,0) \le v(\cdot,0)$,  
then $u\le v$ in $\R^N\times [0,T]$.
\end{teo}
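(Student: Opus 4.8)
The plan is to run the classical doubling-of-variables argument for parabolic viscosity solutions, adapted to the nonlocal geometric setting as in \cite{ChaMorPon15}, with the extra care needed to handle the time-dependent forcing $\mathtt f$ inside the nonlinearity $\G$. First I would reduce to a strict comparison: replacing $v$ by $v+\varepsilon/(T-t)+\varepsilon t$ (or $u$ by $u-\varepsilon(\dots)$), so that it suffices to show $u\le v$ under the assumption that $u$ is a strict subsolution, and then let $\varepsilon\to0$. Then, arguing by contradiction, suppose $\sup_{\R^N\times[0,T]}(u-v)=:2m>0$; since $u,v$ are constant outside a compact set and $u(\cdot,0)\le v(\cdot,0)$, this supremum is attained at some interior point with $t>0$.

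Next I would introduce the doubled function
\[
\Phi_{\alpha,\beta}(x,y,t)=u(x,t)-v(y,t)-\frac{|x-y|^2}{2\beta}-\alpha(|x|^2+|y|^2),
\]
add small penalizations to force the maximum into a compact set, and let $(x_\beta,y_\beta,t_\beta)$ be a maximum point. By the standard lemma on penalization (e.g. \cite[Lemma 3.1]{Crandall-Ishii-Lions} type estimates, as used in \cite{ChaMorPon15}), as $\beta\to0$ one has $|x_\beta-y_\beta|^2/\beta\to0$, $t_\beta\to \bar t>0$, and $u(x_\beta,t_\beta)-v(y_\beta,t_\beta)\to 2m$. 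A separate argument, exactly as in \cite[Section 3]{ChaMorPon15}, rules out the degenerate case $x_\beta=y_\beta$ (where the spatial gradient vanishes): there one uses the admissibility of test functions together with conditions (A)--(D) and the family $\mathcal L$ to conclude that the time derivatives cannot be strictly ordered, contradicting strictness. So I may assume $p_\beta:=(x_\beta-y_\beta)/\beta\neq0$.

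Then I apply the parabolic theorem on sums (Crandall--Ishii) to obtain $(a,p_\beta,X)\in\overline{\PJet}^{2,+}u(x_\beta,t_\beta)$ and $(a,p_\beta,Y)\in\overline{\PJet}^{2,-}v(y_\beta,t_\beta)$ with $X\le Y$ (after the usual matrix inequality), the crucial point being that the \emph{time} slopes $a$ coincide because the penalization does not depend on separate time variables. Using Lemma~\ref{lemma 2.15 nonlocal} for $u$ and its analogue for $v$ at the common time $t=t_\beta$,
\[
a-\psi(p_\beta)\,\G\!\big(-\kappa_*(x_\beta,p_\beta,X,\{u(\cdot,t_\beta)\ge u(x_\beta,t_\beta)\})+\mathtt f(t_\beta)\big)\le -c_0<0,
\]
\[
a-\psi(p_\beta)\,\G\!\big(-\kappa^*(y_\beta,p_\beta,Y,\{v(\cdot,t_\beta)\ge v(y_\beta,t_\beta)\})+\mathtt f(t_\beta)\big)\ge 0,
\]
where $-c_0<0$ comes from the strictness introduced in the first step. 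Subtracting, and using that $\psi$ is positive and the \emph{same} $\mathtt f(t_\beta)$ appears in both — this is precisely where the decoupling works, because we compare at equal times — it remains to show
\[
\G\!\big(-\kappa_*(x_\beta,\dots)+\mathtt f(t_\beta)\big)\ \ge\ \G\!\big(-\kappa^*(y_\beta,\dots)+\mathtt f(t_\beta)\big),
\]
for which, since $\G$ is non-decreasing, it suffices to prove $\kappa_*(x_\beta,p_\beta,X,F)\le \kappa^*(y_\beta,p_\beta,Y,G)$ for the relevant superlevel sets $F\ni x_\beta$, $G\ni y_\beta$. Under (FO) this is Lemma~\ref{lemma 3.4 nonlocal} (with $p_\beta$ parallel to $x_\beta-y_\beta$, which holds by construction); under (C') one instead uses Lemmas~\ref{lm:compkappa} and \ref{lm:compC'}, flattening the two level sets near the contact point by a diffeomorphism close to the identity (whose $C^2$-distance from $\mathrm{Id}$ is controlled by $|x_\beta-y_\beta|^2/\beta\to0$) and invoking the internal/external ball conditions that the penalization terms provide. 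This yields $0\le -c_0$, the desired contradiction, and sending $\varepsilon\to0$ proves $u\le v$.

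\textbf{Main obstacle.} The delicate point — and the one that genuinely departs from \cite{ChaMorPon15} — is the interaction of the forcing term $\mathtt f(t)$ with the nonlinearity $\G$: because $\G$ is merely continuous and monotone (not affine), one cannot split $\G(-\kappa+\mathtt f)$ into a curvature part and a forcing part, so the whole argument must be organized so that $u$ and $v$ are always compared \emph{at the same time} $t_\beta$, making the doubling only in the space variable (a single shared time variable). Keeping the time variables identified is compatible with the theorem on sums and is what makes the cancellation of $\mathtt f(t_\beta)$ automatic; the remaining monotonicity inequality for the relaxed curvatures is then exactly the one handled by the lemmas quoted from \cite{ChaMorPon15}. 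A secondary technical nuisance is the case $\G$ bounded (so $a=+\infty$ or $b=+\infty$ in the notation for $g$, equivalently $\G$ constant for large arguments): there the inequalities above may read $-\infty\le$ something, but since $\G$ is globally bounded the subtracted inequality still gives a contradiction with $-c_0<0$ once $\psi(p_\beta)$ is bounded below, which follows from $|p_\beta|$ being bounded away from $0$ along a subsequence — a point that must be checked, again as in \cite{ChaMorPon15}, using the admissible test function structure.
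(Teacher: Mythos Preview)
Your plan differs from the paper's in one structural choice: you double the variables only in space and keep a single shared time $t$, so that $\mathtt f(t_\beta)$ appears identically in the sub- and supersolution inequalities and cancels. The paper instead doubles \emph{both} space and time, obtaining maxima at $(x^\beta,t^\beta,y^\beta,s^\beta)$, and then has to absorb the discrepancy $\mathtt f(t^\beta)-\mathtt f(s^\beta)$. It does this by first replacing $u,v$ with sup/inf convolutions (so that every superlevel set satisfies a uniform interior/exterior ball condition), which forces the curvature terms $\kappa_*,\kappa^*$ to be uniformly bounded; then $\G$ is uniformly continuous on the relevant compact range, and continuity of $\mathtt f$ together with $|t^\beta-s^\beta|\to 0$ yields the contradiction $2\varepsilon\le o(1)$.

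Your single-time shortcut has a genuine gap. With a shared time variable the test function $(x,t)\mapsto v(y_\beta,t)+|x-y_\beta|^2/(2\beta)+\dots$ is not smooth in $t$, so you cannot use it directly; you must invoke the parabolic theorem on sums, which produces elements of the \emph{closures} $\bar{\PJet}^{2,\pm}$. But Lemma~\ref{lemma 2.15 nonlocal} is stated for $\PJet^{2,\pm}$, and for these nonlocal operators the inequality does not pass to the closure for free: the term $\kappa_*(x,p,X,\{u(\cdot,t)\ge u(x,t)\})$ depends on the full superlevel set, and along an approximating sequence $(x_n,t_n)\to(x_\beta,t_\beta)$ with $u(x_n,t_n)\to u(x_\beta,t_\beta)$ the sets $\{u(\cdot,t_n)\ge u(x_n,t_n)\}$ need not converge (in Hausdorff or with uniform superjet) to $\{u(\cdot,t_\beta)\ge u(x_\beta,t_\beta)\}$. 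This is exactly why the paper, following \cite{ChaMorPon15}, avoids the theorem on sums altogether: in the (FO) case it doubles time so that the frozen function $\varphi(x,t)=v(y^\beta,s^\beta)+\alpha\ell(|x-y^\beta|)+\beta|t-s^\beta|^2+\varepsilon(t+s^\beta)$ is genuinely smooth and gives an honest element of $\PJet^{2,+}u$; in the (C') case it first takes sup/inf convolutions and then uses Jensen's lemma to perturb to points of twice-differentiability, so that all jets are classical. Your one-line treatment of (C') (``flatten by a diffeomorphism close to the identity'') hides essentially all of this machinery.

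Two smaller points. First, the direction of the curvature comparison is reversed in your write-up: Lemma~\ref{lemma 3.4 nonlocal} gives $\kappa_*(x_\beta,\dots)\ge\kappa^*(y_\beta,\dots)$, not $\le$, and this is precisely the direction needed to contradict the inequality obtained by subtraction. Second, the quadratic penalisation $|x-y|^2/(2\beta)$ is not in the admissible class $\mathcal L$ of Definition~\ref{family F}; the paper uses $\alpha\ell(|x-y|)$ with $\ell\in\mathcal L$ so that the degenerate case $x^\beta=y^\beta$ can be handled via admissible test functions.
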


% We detail the proof in the case when (C') holds, as it is more involved. The reader may check that under (FO) an immediate adaptation of \cite[Theorem 3.5]{ChaMorPon15} holds. We sketch the major steps of the proof, which follows \cite{ChaMorPon15}.

\begin{proof}[Proof assuming (FO)] 
We assume wlog that $u(\cdot,0)<v(\cdot,0)$ and by contradiction that there exists $(\bar x,\bar t)\in\R^N\times (0,T]$ such that $u(\bar x,\bar t)- v(\bar x,\bar t)>0$. Setting $F(t):=\{ u(\cdot,t)\ge u(\bar x,t) \}$ and $G(t):=\{ v(\cdot,t)\ge v(\bar x,t) \}$, it holds $ F(\bar t)\nsubseteq G(\bar t).$
Note that one can perturb the set $F$  {(respectively, the set $G$) so that it satisfies   an internal ball condition (resp. an external ball condition), uniformly in time, and $\chi_F$ is still a subsolution (resp.  $\chi_G$ is still a supersolution).}  {This can be done replacing $F$ by $ F_r$ and $G$ by $\{ \text{sd}_G<-r \}=\text{int}(G_{-r})$ for $r>0$ small so that $F(0)\subseteq G(0)$. } 
Let $\ell\in \mathcal L$. We replace $u,v$ by 
\begin{equation}\label{eq:defuv}
    \begin{split}
        u(x,t)  =\max_{\xi\in\R^N, \tau\in [t-T,t]} \chi_{F(t-\tau)}(x-\xi)-\lambda(\ell(\xi)+\tau^2)\\
        v(x,t)  =\min_{\xi\in\R^N, \tau\in [t-T,t]} \chi_{G(t-\tau)}(x-\xi)+\lambda(\ell(\xi)+\tau^2),
    \end{split}
\end{equation}
where $\lambda$ is a positive parameter, big enough so that $u(\cdot,0)\le v(\cdot,0)$.  {The function $u$ (respectively, the function $v$) is equal to one on $F$ (resp. on $G$), zero outside a compact set, and each superlevel set satisfies an internal ball condition (resp. external ball condition)}, uniformly in time. Furthermore,  {for $\lambda$ large enough (so that the max in \eqref{eq:defuv1} is not reached at $\tau=t$),} $u$ is a subsolution while $v$ is a supersolution in $\R^N\times [2/\sqrt \lambda,T]$.  We refer to \cite{ChaMorPon15} for the proof of these facts.

Let $\alpha,\beta,\e>0$ and set 
\[  \Phi(x,t,y,s):= u (x,t) - v(y,s)  - \alpha \ell(|x-y|) -\beta |t-s|^2 -\e (t+s) . \]
Noticing that $\Phi$ is u.s.c., let $z^\beta=(x^\beta,t^\beta,y^\beta,s^\beta)$ be a maximum point of $\phi$. Note that choosing $\e$ small (depending on $\bar t$), we can assume that the maximum is strictly positive and that $t^\beta,s^\beta$ are strictly positive. Moreover, for $\lambda$ large enough and $\beta\ge \lambda$, one can ensure that $t^\beta,s^\beta>2/\sqrt\lambda.$ \\
%????By classical arguments, $x^\beta,y^\beta$ are uniformly bounded as $\beta\to+\infty,$\footnote{Indeed, note that $|x^\beta-y^\beta|\le M$ independently of $\beta$, and set $R>0$ so that   $u,v$ are constant outside $B_R$. If $|x^\beta|,|y^\beta|>R$ we can consider   $x^\beta+\tau^\beta,y^\beta+\tau^\beta$ for suitable translations $\tau^\beta$, and check that the maximum is unchanged. In the other cases the condition $|x^\beta-y^\beta|\le M$ implies the equiboundedness of $x^\beta,y^\beta$.} 
%thus we find a limit point  $(x_0,t_0,y_0,t_0)$ of $z^\beta$   along a subsequence $\beta_n\to +\infty$. We distinguish two cases.\\
\noindent \textbf{Case 1: }$x^\beta=y^\beta$ along a sequence $\beta_n\to+\infty$. In this case, defining 
\begin{equation}\label{eq:defvarp}
    \begin{split}
        \varphi(x,t) =  v(y^\beta,s^\beta) +\e (t+s^\beta) +\alpha \ell(|x-y^\beta|) +\beta |t-s^\beta|^2   \\
        \psi(y,s) = u(x^\beta,t^\beta) -\e (t^\beta+s ) -\alpha \ell(|x^\beta-y |) -\beta |t^\beta-s |^2,
    \end{split}
\end{equation}
since $u,v$ are a sub- and supersolution respectively, we have 
\[ 0\ge \varphi_t(x^\beta,t^\beta)=2\beta(t^\beta-s^\beta)+\e,\quad 0\le \psi_t(y^\beta,s^\beta)=2\beta(t^\beta-s^\beta) - \e, \]
which yields a contradiction.\\
\noindent \textbf{Case 2: }$x^\beta\neq y^\beta$ for all $\beta$ sufficiently large. 
Note that
\begin{equation*}
    \begin{split}
%(\varphi_t,\nabla\varphi,\nabla^2\varphi)(z^\beta)=
\Bigl(2\beta ( t^\beta - s^\beta) + \e, \alpha f'(| p^\beta| ) \frac{  p^\beta}{| p^\beta |} , X\Bigr) \in \PJet^{2,+} u(  x^\beta,  t^\beta), \\
\Bigl(2\beta ( t^\beta - s^\beta) - \e, \alpha f'(| p^\beta| ) \frac{  p^\beta}{| p^\beta |} , -X\Bigr) \in \PJet^{2,-} v(  y^\beta,  s^\beta),
    \end{split}
\end{equation*}
where  $ p^\beta:= x^\beta- y^\beta$ and $X:=\nabla^2\varphi( x^\beta, t^\beta)$, with $\varphi$ defined in \eqref{eq:defvarp}. Thus, by Lemma~\ref{lemma 2.15 nonlocal}, we have
\begin{equation}\label{eq:subsuperFO}
\begin{split}
2\beta ( t^\beta - s^\beta) + \e -\psi(|p^\beta|)\,  \G\Bigl( -\kappa_*( x^\beta,  \alpha f'(| p^\beta| ) \frac{  p^\beta}{| p^\beta |}, X, \{ u(\cdot, t^\beta ) \ge u( x^\beta,  t^\beta)\})+\mathtt f(t^\beta)\Bigr)\le 0,\\
2\beta ( t^\beta - s^\beta) - \e -\psi(|p^\beta|)\,  \G\Bigl( -\kappa^*( y^\beta,  \alpha f'(| p^\beta| ) \frac{  p^\beta}{| p^\beta |}, -X, \{ v(\cdot, t^\beta) \ge v( x^\beta,  t^\beta)\})+\mathtt f(s^\beta)\Bigr)\le 0.
\end{split}
\end{equation}
Let us denote $\hat p^\beta:= \alpha f'(| p^\beta| ) \frac{  p^\beta}{| p^\beta |}, F^\beta:=\{ u(\cdot, t^\beta) \ge u( x^\beta,  t^\beta)\}, G^\beta :=\{ v(\cdot, t^\beta) \ge v( x^\beta,  t^\beta)\} .$ We then remark that 
\[ \{ u(\cdot, t^\beta) \ge u( x^\beta,  t^\beta)\}+B(0,|{y}^\beta-x^\beta|)\subseteq  \{ v(\cdot, s^\beta) > v(  y^\beta,   s^\beta)\}. \]
Indeed,  if $x\in  \{  u(\cdot, t^\beta) \ge u( x^\beta,  t^\beta)\}$
and $|y-x|< |{y}^\beta-x^\beta|$, since $z^\beta$ is a maximum point for $\Phi$, it holds
\[
v( y^\beta,  s^\beta) - v(y,  s^\beta)
\ \le\ u(  x^\beta,  t^\beta)-u(x,t) +\alpha \ell(|x-y|)-\alpha \ell(| x^\beta-  y^\beta|)
\ <\ 0
\]
so that $y\in \{  v(\cdot, s^\beta) > v(  y^\beta,   s^\beta)\}$. Thus, we can apply Lemma \ref{lemma 3.4 nonlocal} to infer from \eqref{eq:subsuperFO} that 
\begin{equation}\label{eq:contradFO}
    2\e\le  -\psi(|p^\beta|)\,\Bigl(  \G( -\kappa^\beta+\mathtt f(s^\beta)  ) -  \G( -\kappa^\beta +\mathtt f(t^\beta))  \Bigr),
\end{equation}
where we set $\kappa^\beta:=\kappa^*( y^\beta,  \hat p^\beta, -X, G^\beta )$.  Since all the superlevel sets of $u,v$ satisfy a uniform internal, external (respectively) ball condition, and thanks to Lemma \ref{lemma 3.4 nonlocal}, the term $\kappa^\beta$ is bounded  as $\beta\to+\infty$, and so we can assume $(|\kappa^\beta|+\|\mathtt f\|_\infty)\le M.$ Since $\G$ is uniformly continuous in $[-M,M]$, \eqref{eq:contradFO} implies 
\begin{equation*}
    2\e=O(|\mathtt f(s^\beta) -\mathtt f(t^\beta)|)
\end{equation*}
as $\beta\to+\infty$,  a contradiction.
    
\end{proof}

\begin{proof}[Proof assuming (C')]
We assume wlog that $u(\cdot,0)<v(\cdot,0)$ and argue by contradiction. Assume that   there exists $a\in\R$ and $t\in (0,T]$ such that  $F(t):=\{u(\cdot,t) \ge a\}\nsubseteq G(t) :=\{v(\cdot,t) > a\}$.  {As sketched in the previous case,} can assume that
 {$F$ satisfies an internal ball condition while $G$ satisfies an external ball condition,} uniformly in time, and $\chi_F,\chi_G$ are still a sub and supersolution. 
For fixed $\ell\in \mathcal L$ and  $\lambda>0,$ we can replace $u,v$ by 
\begin{equation}\label{eq:defuv1}
    \begin{split}
        u(x,t)  =\max_{\xi\in\R^N, \tau\in [t-T,t]} \chi_{F(t-\tau)}(x-\xi)-\lambda\,(\ell(\xi)+\tau^2)\\
        v(y,s)  =\min_{\xi\in\R^N, \tau\in [s-T,s]} \chi_{G(s-\tau)}(y-\xi)+\lambda\,(\ell(\xi)+\tau^2).
    \end{split}
\end{equation}
Note that it holds  $u(\cdot,0)\le v(\cdot,0)$ for $\lambda$  big enough.
 {The function $u$ (respectively, the function $v$) is equal to one on $F$ (resp. on $G$), zero outside a compact set, and each superlevel set satisfies an internal ball condition (resp. external ball condition)}, uniformly in time. Furthermore,  {for $\lambda$ large enough (so that the max in \eqref{eq:defuv1} is not reached at $\tau=t$),} $u$ is a subsolution while $v$ is a supersolution in $\R^N\times [2/\sqrt \lambda,T]$.   {In the following, we} omit the dependence on $\lambda,$ as it will be a fixed parameter.  For $\alpha,\beta,\e>0$ and $\N\ni \beta\ge \lambda$, we define
$$
\Phi(x,t,y,s):= u (x,t) - v(y,s) -\e (t+s) - \alpha \ell(|x-y|) -\beta |t-s|^2,
$$ 
which is semiconvex. For $\e>0$ small and $\alpha,\beta$ large enough the function $\Phi$ admits a positive maximum 
at some $(x^\beta,t^\beta,y^\beta,s^\beta) \in \R^N\times [0,T]\times \R^N\times [0,T]$ with $t^\beta,\, s^\beta>0$. Note also that 
$$
|t^\beta-s^\beta|\to 0 \quad \text{ as }\beta\to+\infty.
$$ 
Since $u,v$ are  {constant} outside a compact set, and by translation invariance  {it} is not difficult to see that $x^\beta,y^\beta$ admit cluster points $x_0,y_0$ as $\beta\to+\infty$ (see for instance \cite[page~14]{Morini-notes}). We thus assume wlog that $(x^\beta,y^\beta)\to(x_0,y_0)$ as $\beta\to +\infty$. If $x^\beta=y^\beta$ infinitely often,
one can conclude considering $\varphi,\psi$ defined in  \eqref{eq:defvarp} (see the previous proof and   \cite{ChaMorPon15}). Thus, we assume $x^\beta\neq y^\beta$ for all $\beta$.
 One can also assume that $\ell(|x^\beta-y^\beta|)<1$  {(taking $\lambda$ large)} and check that 
 \begin{equation}\label{eq:umin1}
 	u(x^\beta,t^\beta) <1.
 \end{equation}
  {Indeed, $D u(x^\beta,t^\beta)=D\ell(|x^\beta-y^\beta|)\neq 0$, while  $u(x,t) = 1$ if and only if $x\in F(t)$, but on $F(t)$ it holds $Du=0$.}\\
\noindent\textbf{Step 1:} In this step we provide  estimates for the final argument.  {The constructions are essentially the same introduced in \cite{ChaMorPon15}, which we recall for the reader's convenience.}
We fix $\beta$ and  omit the dependence on it of the approximating parameters.

Let $q: [0,+\infty]\to [0,1]$ be a smooth, nondecreasing, function with $q(r)= r^4$ for $r<1/2$
and $q(r)=1$ for $r>3/2$. For $\rho>0$  {we define}
$$
\Phi_{  \rho}(x,t,y,s):= \Phi(x,t,y,s) - \rho [q(|x-x^\beta|)+ q(|y-y^\beta|)+q(|t-t^\beta|)+q(|s-s^\beta|)],
$$
so that $(x^\beta,t^\beta,y^\beta,s^\beta)$ is a strict maximum of $\Phi_{  \rho}$. 
Let $\eta:\R^N\to \R$ be a smooth cut-off function, with compact support and equal to one in a neighborhood $U$ of the origin. 
 {For every $\Delta:=(\zeta_u,\tau_u,\zeta_v,\tau_v)\in \R^N\times \R\times \R^N\times \R$, the function 
\[
    \Phi_\rho(x,t,y,s)-\Big( \eta(x-x^\beta)\, (\zeta_u,\tau_u)\cdot(x,t)+  \eta(y-y^\beta)\, (\zeta_v,\tau_v)\cdot(y,s)   \Big)
\]
is maximized at some $(x_\Delta,t_\Delta,y_\Delta,s_\Delta)$ converging to $(x^\beta,t^\beta,y^\beta,s^\beta)$ as $|\Delta|\to0$. Therefore,  by Jensen's Lemma \cite[Lemma A.3]{CraIshLio}}
we may assume that for every $\delta>0$ sufficiently small there exists 
$\Delta _{\rho, \delta}:=(\zeta^{\rho, \delta}_u, h^{\rho, \delta}_u ,\zeta^{\rho, \delta}_v, h ^{\rho, \delta}_v)$, {with $|\Delta _{\rho, \delta}|\le \delta$}, such that the function
\begin{equation*}
\Phi_{  \rho, \delta}(x,t,y,s):=\Phi_{\rho}(x,t,y,s) \\ 
- \Big(\eta(x - x^\beta)  (\xi ^{\rho, \delta}_u, h ^{\rho, \delta}_u)\cdot (x,t) + \eta(y - y^\beta)  (\xi ^{\rho, \delta}_v,h ^{\rho, \delta}_v)\cdot  (y,s)  \Big)
\end{equation*}
attains a maximum at some $z _{\rho, \delta}:=(x _{\rho, \delta}, t _{\rho, \delta}, y _{\rho, \delta}, s _{\rho, \delta})$
 where $\Phi_{\delta, \rho}$ is twice differentiable and such that  $x _{\rho, \delta}-x^\beta$, $y _{\rho, \delta}-y^\beta \in U$ and $t _{\rho, \delta},\, s _{\rho, \delta} >0$.  Moreover, 
 \begin{equation}
z _{\rho, \delta} \to (x^\beta, t^\beta, y^\beta, s^\beta) \qquad \text{ as }  \delta \to 0. 
\end{equation}

Notice that since $\Phi_{\rho}$ is twice differentiable  at $z _{\rho, \delta}$ it follows that also $u,v$ are twice differentiable at  $(x _{\rho, \delta}, t _{\rho, \delta})$ and $(y _{\rho, \delta}, s _{\rho, \delta})$, respectively.

 {
Let   $ \tau^{\rho, \delta}_u\in\R$ (resp.  $\tau^{\rho, \delta}_v \in \R$) be the maximizing  (resp. minimizing) $\tau$  in  \eqref{eq:defuv1}  corresponding to the point $(x _{\rho, \delta}, t _{\rho, \delta})$ (resp. $(y _{\rho, \delta},s _{\rho, \delta})$). 
}
Setting
\begin{equation*}
\begin{split}
    &\tilde u(x,t) := \max_{\xi\in \R^N}  \left\{ \chi_{F(t- \tau^{\rho, \delta}_u)} (x-\xi) - \lambda  \ell(|\xi|)\right\}  - \lambda   (\tau^{\rho, \delta}_u)^2 \\ 
&\tilde v(y,s):= \min_{\xi\in \R^N} \left\{ \chi_{G(s- \tau^{\rho, \delta}_v)}(y - \xi) +  \lambda \ell(|\xi|) \right\} + \lambda   (\tau^{\rho, \delta}_v)^2, 
\end{split}
\end{equation*}
we note that 
\begin{equation}\label{eq:tildeu}
\begin{split}
& u \ge \tilde u,
\qquad   u (x _{\rho, \delta},t _{\rho, \delta}) =  \tilde u(x _{\rho, \delta},t _{\rho, \delta}),\\
&v \le \tilde v,  
\qquad v (y _{\rho, \delta},s _{\rho, \delta}) = \tilde v(y _{\rho, \delta},s _{\rho, \delta}).      
\end{split}
\end{equation}
Set now
\begin{align*}
\hat u(x,t):= \tilde u(x,t) -\rho \left(q(|x-x _{\rho, \delta}|) +q(|x-x^\beta|) + q(|t-t^\beta|)\right) - \eta(x - x^\beta)  (\xi ^{\rho, \delta}_u, h ^{\rho, \delta}_u)\cdot (x,t) ,\\
\hat v(y,s):=  \tilde v(y,s) +\rho\left( q(|y-y _{\rho, \delta}|)+ q(|y-y^\beta|) +q(|s-s^\beta|) \right) + \eta(y - y^\beta)  (\xi ^{\rho, \delta}_v,h ^{\rho, \delta}_v)\cdot  (y,s) .
\end{align*}
Then, the function
\[
 \hat u(x,t) -  \hat v(y,s)
 -\e(t+s)- \alpha \ell(|x-y|) -\beta |t-s|^2
\]
has a maximum at  $z _{\rho, \delta}$, which is 
strict with respect to the spatial variables. Thus
\begin{equation*}
 \hat F_{\rho, \delta}(t):=\{ \hat u(\cdot,t) \ge 
\hat u(x _{\rho, \delta},t _{\rho, \delta})\}, 
\qquad
 \hat G_{\rho,\delta}(s):=\{   \hat v(\cdot,s) > \hat v(y _{\rho, \delta},s _{\rho, \delta}) \}. 
\end{equation*}
satisfy $\hat F_{\rho,\delta}(t _{\rho, \delta})\subseteq \hat G_{\rho,\delta}(s _{\rho, \delta})$ and moreover $x _{\rho, \delta}\in \hat F_{\rho,\delta}(t _{\rho, \delta})$ and $y _{\rho, \delta}\in \hat G_{\rho,\delta}(t _{\rho, \delta})$ are the only points realizing the distance between $\hat F_{\rho,\delta}(t _{\rho, \delta})$ and $\hat G_{\rho,\delta}(t _{\rho, \delta})$.
In particular, $\hat F_{\rho,\delta}(t _{\rho, \delta})$  {(respectively,   $\hat G_{\rho,\delta}(t _{\rho, \delta})$) satisfies an  external ball condition  (resp. internal ball condition) of radius $|x^\beta-y^\beta|>0$.}
We observe that at the maximum point,
$$
\Big| |D \hat u(x_{\rho,\delta},t_{\rho,\delta})|-\alpha \ell'(|x^\beta-y^\beta|)  \Big|=\omega(\rho,\delta)
$$ 
where $\omega\to 0$ as its arguments tend to 0, thus since $\ell'(|x^\beta-y^\beta|)\neq 0$,  the term $ |D \hat u(x_{\rho,\delta},t_{\rho,\delta})|$ is bounded below for $\rho,\delta$ small. In addition, the function
$\hat u$ is semiconvex, hence $\hat{F}_{\rho,\delta}(t_{\rho,\delta})$ has an
interior ball condition at $x_{\rho,\delta}$ with a radius depending on $\lambda$ only, thus independent
on $\rho,\delta$, if small enough, and $\beta$. Analogously, 
$\hat G_{\rho,\delta}(s _{\rho, \delta})$ has an exterior  ball condition at
$y_{\rho,\delta}$ with a radius depending on $\lambda$ only. 

Set 
\[
\breve \Phi_{  \rho, \delta}(x,t,y,s):= \Phi_{\rho, \delta}(x,t,y,s)+\alpha   \ell(|x-y|) +\beta  |t-s|^2
\]
and
\begin{align*}%\label{super1}
& ( \breve a_{\rho,\delta}, \breve p_{\rho,\delta}, \breve X_{\rho,\delta}) 
:= ( \partial_t \breve\Phi_{  \rho, \delta} (z _{\rho, \delta}) , D_x \breve\Phi_{  \rho, \delta} (z _{\rho, \delta}), D^2_x \breve\Phi_{  \rho, \delta} (z _{\rho, \delta}) ),
\\ 
%\label{super2}
& (\breve b_{\rho,\delta}, \breve q_{\rho,\delta},  \breve Y_{\rho,\delta}) :=  ( \partial_s \breve\Phi_{  \rho, \delta} (z _{\rho, \delta}) , D_y \breve\Phi_{  \rho, \delta} (z _{\rho, \delta}), D^2_y \breve\Phi_{  \rho, \delta} (z _{\rho, \delta}) ).
\end{align*}
 { Then, recalling \eqref{eq:tildeu},  we observe that the superjet $( \breve a_{\rho,\delta}, \breve p_{\rho,\delta},  \breve X_{\rho,\delta})$ of
\[
u(x,t)-
\rho [q(|x-x^\beta|) + q(|t-t^\beta|)] - \eta(x - x^\beta)   (\xi ^{\rho, \delta}_u, h ^{\rho, \delta}_u)\cdot (x,t) 
\]
at $(x_{\rho,\delta},t_{\rho,\delta})$ is also a superjet for $\hat{u}(x,t)$
at the same point. Since $\hat u(x,t)\geq \hat u (x _{\rho, \delta},t _{\rho, \delta}) \chi_{ \hat F_{\rho,\delta}(t)}(x)$ and $x_{\rho,\delta}\in \hat F_{\rho,\delta}(t_{\rho,\delta})$,    we have}
\begin{align}
 ( \breve a_{\rho,\delta}, \breve p_{\rho,\delta},  \breve X_{\rho,\delta}) \in \PJet^{2,+}_{\hat u (x _{\rho, \delta},t _{\rho, \delta}) \chi_{ \hat F_{\rho,\delta}} }(x _{\rho, \delta},t _{\rho, \delta}), \label{parajet}\\
 ( \breve b_{\rho,\delta},  \breve q_{\rho,\delta},  \breve Y_{\rho,\delta}) \in \PJet^{2,-}_{\hat v (y _{\rho, \delta},s _{\rho, \delta}) \chi_{  \hat G_{\rho,\delta}} }(y _{\rho, \delta},s _{\rho, \delta}).\label{parajet2}
\end{align}
Since $z_{\rho,\delta}$ is a maximum of $\Phi_{\rho,\delta}$,
\begin{equation}\label{eq:relparjet}
 \breve a_{\rho,\delta} -  \breve b_{\rho,\delta} = 2\e, \qquad  \breve p_{\rho,\delta} =  \breve q_{\rho,\delta}, \quad  
 \breve X_{\rho,\delta} \le  \breve Y_{\rho,\delta}.
\end{equation} 
By construction, $\breve \Phi_{\rho,\delta}$ is also semiconvex, so 
that $\breve X_{\rho,\delta} \ge -cI$, $\breve Y_{\rho,\delta}\le
cI$ for a constant $c$ that does not depend on $\rho,\delta$.

%As detailed in \cite{ChaMorPon15}, one can then build a $C^2$ diffeomorphism $\Psi_{\rho,\delta}$ with the following properties: $\Psi_{\rho,\delta}(x_{\rho,\delta})=x_{\rho,\delta}$, it is a constant (small) translation outside a neighborhood of $x^\beta$, it converges $C^2$ to the identity as $\rho,\delta\to 0$.

 {We then let
\[
c_{\rho,\delta}(x,t) = \tilde u(x,t)+(\hat u(x_{\rho,\delta},t_{\rho,\delta}) - \hat u(x,t) )
\]
and note that $c_{\rho,\delta}\to u(x^\beta,t^\beta) $ uniformly  as $\rho,\delta\to 0 $. Thus, thanks to \eqref{eq:umin1}  we can assume $c_{\rho,\delta}<1$. Note also that $c_{\rho,\delta}$ is smooth and constant away from a neighborhood of $(x^\beta,t^\beta)$, and $c_{\rho,\delta}(x_{\rho,\delta},t_{\rho,\delta})= u(x_{\rho,\delta},t_{\rho,\delta})$. }

 {Since  $\hat F_{\rho,\delta}(t)= \{   \tilde u(\cdot,t)\ge c_{\rho,\delta}(\cdot,t) \}$, by definition of $\tilde u$ one can check that
\begin{equation}\label{eq:prophatF}
\begin{split}
    \hat F_{\rho,\delta}(t) = \Big\lbrace x\in\R^N : x\in \xi + F(t-\tau^{\rho,\delta}_u) \text{ for some } \xi\in\R^N \text{ with } |\xi|\le \ell^{-1}\left(  \frac{1-c_{\rho,\delta}}\lambda  \right)  \Big\rbrace .
\end{split}	
\end{equation}
For $\rho,\delta$ small enough it holds $x_{\rho,\delta}\notin  F(t_{\rho,\delta}-\tau^{\rho,\delta}_u) $ (from \eqref{eq:defuv1}), and so we introduce  $ w_{\rho,\delta}$  such  that $  x_{\rho,\delta} + w_{\rho,\delta}$ is the projection of $ x_{\rho,\delta}$ on $F(t_{\rho,\delta}-\tau_u^{\rho,\delta})$. In particular, $\xi= -w_{\rho,\delta}$  reaches the max in \eqref{eq:defuv1} for $x=x_{\rho,\delta}$. Also, $|w_{\rho,\delta}|=\ell^{-1}((1-c_{\rho,\delta}(x_{\rho,\delta},t_{\rho,\delta}))/\lambda)$. We define 
\[
\Psi_{\rho,\delta}(x) = x- \ell^{-1}\left( \frac{1-c_{\rho,\delta}(x,t_{\rho,\delta})}\lambda \right)\frac{w_{\rho,\delta}}{|w_{\rho,\delta}|} + w_{\rho,\delta},
\]
which is a $C^2$ diffeomorphism, since $c_{\rho,\delta}$ is bounded away from 1. Moreover, $\Psi_{\rho,\delta}$ is a constant small translation out of a neighborhood of $x^\beta$, converges in $C^2$ to the identity as ${\rho,\delta}\to 0$, and $\Psi_{\rho,\delta}(x_{\rho,\delta})=x_{\rho,\delta}$.
}
From this, define the set  
\begin{equation*}
\check F_{\rho,\delta}(t):=\Psi_{\rho,\delta} (F(t- \tau^{\rho, \delta}_u) - w_{\rho,\delta}).
\end{equation*}
By construction, ${\check F}_{\rho,\delta}(t_{\rho, \delta}) \subseteq  \hat F_{\rho,\delta}(t_{\rho, \delta})$ and $x _{\rho, \delta} \in \partial {\check F}_{\rho,\delta}(t _{\rho, \delta}) \cap \partial  \hat F_{\rho,\delta} (t _{\rho, \delta})$. 
Since  
$\hat F_{\rho,\delta}(t _{\rho, \delta})$  satisfies a uniform external ball condition in $x _{\rho, \delta}$, so does $\check F_{\rho,\delta}(t _{\rho, \delta})$  {(with possibly a different radius)}.  Since $F$ satisfies an internal ball condition uniformly in time  {and $\Psi_{\rho,\delta}$ converges $C^2$ to the identity as ${\rho,\delta}\to0$}, we can assume additionally that  $\check F_{\rho,\delta}(t _{\rho, \delta})$ satisfies  a uniform internal
ball condition for $\rho,\delta$ small enough, with radius depending on $\lambda$.

Finally, defining
\begin{align*}
 (p_{\rho,\delta}, X_{\rho,\delta}) := 
\Big( &D_x(\breve\Phi_{\rho, \delta}(\cdot, t_{\rho, \delta}, y_{\rho, \delta}, s_{\rho, \delta})\circ\Psi_{\rho, \delta})(x_\delta), \\
&D_x^2(\breve\Phi_{\rho, \delta}(\cdot, t_{\rho, \delta}, y_{\rho, \delta}, s_{\rho, \delta})\circ\Psi_{\rho, \delta})(x_\delta) \Big),
\end{align*}
one can check that  {by construction (see \eqref{parajet}) it holds }
$$
( \breve a_{\rho,\delta},p_{\rho,\delta}, X_{\rho,\delta}) \in \PJet^{2,+}_{\hat u(x _{\rho, \delta},t _{\rho, \delta}) \chi_{F(t- \tau^{\rho, \delta}_u  )   }  }(x _{\rho, \delta} + w_{\rho,\delta},t_{\rho,\delta})
$$
Since $\hat u(x _{\rho, \delta},t _{\rho, \delta}) \chi_{F(t- \tau^{\rho, \delta}_u)}$ is a subsolution, we have
\begin{equation}\label{eq:contrad1}
 \breve a_{\rho,\delta} + \psi(|p_{\rho,\delta}|)\G\left(  \kappa_* (x _{\rho, \delta} + w_{\rho,\delta},p_{\rho,\delta}, X_{\rho,\delta},F(t _{\rho, \delta} - \tau^{\rho, \delta}_u )) 
+ \mathtt f(t_{\rho,\delta}) \right)  \le 0.
\end{equation}
Note that  
$$
p_{\rho,\delta}\to Du(x^\beta, t^\beta)\neq 0\,,
$$
as $\rho$, $\delta\to 0$, and thus $|p_{\rho,\delta}|$ is bounded away from zero for  $\rho$ and $\delta$ sufficiently small. Since also $\breve X_{\rho,\delta}$
and hence $X_{\rho,\delta}$ is bounded, the curvature terms $\kappa_* (x _{\rho, \delta},  \breve p_{\rho,\delta},  \breve X_{\rho,\delta}, \check F_{\rho, \delta}(t _{\rho, \delta}  )) $ are uniformly bounded from above and below as $\rho,\delta\to0$. Thus, $\G$ is uniformly continuous and by Lemma~\ref{lm:compkappa}
we deduce from \eqref{eq:contrad1} that 
\begin{equation}\label{eq:contrad3}
 \breve a_{\rho,\delta} + \psi(| \breve p_{\rho,\delta}|) \G\left( \kappa_* (x _{\rho, \delta},  \breve p_{\rho,\delta},  \breve X_{\rho,\delta}, \check F_{\rho, \delta}(t _{\rho, \delta}  )) 
+ \mathtt f(t_{\rho,\delta}) \right) \le \omega(\rho,\delta),
\end{equation}
where $\omega$ is a modulus of continuity that depends on $\beta$, and $ \omega(\rho,\delta)\to 0$ as $\rho,\, \delta\to 0$. 
%Note that $\omega$ depends on $\beta$ via $|x^\beta-y^\beta|$ only.
Analogously, from \eqref{parajet2}, \eqref{eq:relparjet} and since $\hat v(y_{\rho,\delta},s_{\rho,\delta})\chi_{ G(t-\tau_v^{\rho,\delta})}$ is a supersolution,  we also have
\begin{equation}\label{eq:contrad4}
 \breve a_{\rho,\delta} -2\e +\psi( | \breve p_{\rho,\delta}|) \G\left(  \kappa^* (y _{\rho, \delta} ,  \breve p_{\rho,\delta},  \breve Y_{\rho,\delta}, \check G_{\rho, \delta}(s _{\rho, \delta})) + \mathtt f(s_{\rho,\delta})\right)  \ge \omega(\rho,\delta)
\end{equation}
for a suitable set $\check G_{\rho, \delta}(s _{\rho, \delta}))$ such that 
$$\hat F(t _{\rho, \delta}  ) + (y _{\rho, \delta} - x _{\rho, \delta}) \subseteq \check G_{\rho, \delta}(s _{\rho, \delta}))
\text{ and }  
\partial (\check F _{\rho, \delta}(t _{\rho, \delta}  )+  (y _{\rho, \delta} - x _{\rho, \delta})) \cap \partial \check G _{\rho, \delta}(s _{\rho, \delta})) = \{y _{\rho, \delta}\}.
$$
By the  {equation} above, \eqref{eq:relparjet} and Lemma~\ref{lm:compC'} we get 
$$
\kappa_* (x _{\rho, \delta},  \breve p_{\rho,\delta},  \breve X_{\rho,\delta}, \check F_{\rho, \delta}(t _{\rho, \delta}  )) \ge
\kappa^* (y _{\rho, \delta} ,  \breve p_{\rho,\delta},  \breve Y_{\rho,\delta}, \check G_{\rho, \delta}(s _{\rho, \delta})),
$$ 
and thus by \eqref{eq:contrad3} and \eqref{eq:contrad4} we arrive at 
\begin{equation}\label{eq:end-contr}
    \begin{split}
        -2\e +  \psi( | \breve p_{\rho,\delta}| ) \Big[   \G\left( \kappa_* (x _{\rho, \delta},  \breve p_{\rho,\delta},  \breve X_{\rho,\delta}, \check F_{\rho, \delta}(t _{\rho, \delta}  ))  +\mathtt f(s_{\rho,\delta})\right) \\
 -  \G\left( \kappa_* (x _{\rho, \delta},  \breve p_{\rho,\delta},  \breve X_{\rho,\delta}, \check F_{\rho, \delta}(t _{\rho, \delta}  ))  +\mathtt f(t_{\rho,\delta})\right)  \Big] 
 \ge 2\omega(\rho,\delta).
    \end{split}
\end{equation}
 \noindent\textbf{Step 2:} We now pass to the limit $\rho,\delta\to 0$ then $\beta\to+\infty.$
Recalling that $(x^\beta,y^\beta)\to (x_0,y_0)$ as $\beta\to+\infty$, we distinguish two cases.\\
\noindent\textbf{Case 1: }Assume that $x_0\neq y_0$. In this case, $|x^\beta-y^\beta|$ is uniformly bounded from below, and thus  the term $\kappa_* (x _{\rho, \delta},  \breve p_{\rho,\delta},  \breve X_{\rho,\delta}, \check F_{\rho, \delta}(t _{\rho, \delta}  ))$ is uniformly bounded in $\rho,\delta,\beta$.
%In turn, the modulus of continuity $\omega$ appearing in \eqref{eq:end-contr} does not depend on $\beta$. 
Therefore, the continuity of $\G$ is uniform as $\rho,\delta,\beta$ vary, and \eqref{eq:end-contr} implies 
\begin{equation}\label{eq:problemsf?}
     \tilde \omega\left(| \mathtt f(s^\beta)-\mathtt f(t^\beta)| \right) + \omega(\rho,\delta) \ge \e,
\end{equation}
for a  modulus of continuity $\tilde \omega$ such that $\tilde\omega (r)\to 0$ as  $r\to 0$.
We  pass to the limit $\rho,\delta\to0$ then $\beta\to +\infty$   to arrive at  a contradiction.

\noindent\textbf{Case 2:} It holds $x_0 = y_0$. In this case, we recall that 
\[ p_{\rho,\delta}\to Du(x^\beta, t^\beta)=\alpha \ell'(|x^\beta-y^\beta|), \quad \text{ as } \rho,\delta\to0.  
\]
Recall that the set $ \check F_{\rho, \delta}(t _{\rho, \delta}  )$ satisfies a uniform internal and external ball condition of radius $|x^\beta-y^\beta|$. In particular
\[  |\kappa_* (x _{\rho, \delta},  \breve p_{\rho,\delta},  \breve X_{\rho,\delta}, \check F_{\rho, \delta}(t _{\rho, \delta}  )) | \le \overline c(|x^\beta-y^\beta|). \]
Therefore, equation \eqref{eq:end-contr} implies 
\[
\psi( | p_{\rho,\delta}|+\omega(\rho,\delta))\  \G\left(  \overline c(|x^\beta-y^\beta|+\|\mathtt f\|_\infty)\right) + 2\omega(\rho,\delta) \ge 2\e.
\]
Sending $\rho,\delta\to 0$ we get 
\[
c_\psi |\ell'(|x^\beta-y^\beta|)|  \,  \G\left(  \overline c(|x^\beta-y^\beta|)\right) \frac{\G\left(  \overline c(|x^\beta-y^\beta|+\|\mathtt f\|_\infty)\right)}{\G\left(  \overline c(|x^\beta-y^\beta|)\right)}\ge 2\e.
\]
recalling the properties of $\ell$ (see Definition \ref{family F}), we arrive at a contradiction sending $\beta\to +\infty$.
\end{proof}

\subsection*{Acknowledgements}
The author wishes to thank professors A. Chambolle, M. Morini, and M. Novaga for many helpful discussions and comments. 
 {The author wishes to thank the anonymous referee fo the careful reading of the manuscript and his
comments, which helped improve the paper.} 
The majority of this work was carried out during the author's PhD at Paris-Dauphine University. The author wishes to express gratitude for the warm and convivial atmosphere experienced there.

The author  has received partial funding from the European Union's Horizon 2020 research and innovation program under the Marie Skłodowska-Curie grant agreement No 945332.
The author is funded by the European Union: the European Research Council (ERC), through StG ``ANGEVA'', project number: 101076411. Views and opinions expressed are however those of the authors only and do not necessarily reflect those of the European Union or the European Research Council. Neither the European Union nor the granting authority can be held responsible for them.
\subsection*{Data Availability Statement}
Data sharing is not applicable to this article as no datasets were generated or analyzed during the
current study
\subsection*{Conflict of interest}
The authors declare that they have no conflict of interest.

\printbibliography
\end{document}